\documentclass{article}
\usepackage{stmaryrd}

%

\usepackage[utf8]{inputenc}
\usepackage[T1]{fontenc}
\usepackage[british,UKenglish,USenglish,english,american]{babel}

\textwidth=125mm \textheight=195mm

\usepackage{hyperref}
\usepackage{amsmath}
\usepackage{amssymb}
\usepackage{amsthm}
\usepackage{mathrsfs}
\usepackage{amsfonts}

\usepackage{esint}
\usepackage{array}
\usepackage{latexsym}

\usepackage{lmodern}

\usepackage{graphicx}
\usepackage{graphics}
\usepackage{epsfig}
\usepackage{color}
\DeclareGraphicsExtensions{.eps,.pdf,.jpeg,.png}

\usepackage{titlesec}
\usepackage{multirow}

\usepackage[all]{xy}
\usepackage{dsfont}

\usepackage{hyperref}
\usepackage{bigints}


\def\ind{\operatorname{ind}}

\def\Tr{\operatorname{Tr}}

\def\Z{\mathbb Z}


{\theoremstyle {definition} \newtheorem {defi} {Definition} [section] }
{\theoremstyle {plain}  \newtheorem {thm} [defi] {Theorem}}

{\theoremstyle {plain}  \newtheorem {cor} [defi]{Corollary}}

{\theoremstyle {plain} \newtheorem {prop} [defi]{Proposition}}

{\theoremstyle {plain} \newtheorem {lem}[defi] {Lemma}}

{\theoremstyle {definition} }

{\theoremstyle {definition} \newtheorem{remarque}[defi]{Remark}}
{\theoremstyle {definition} }
{\theoremstyle {definition} }
{\theoremstyle {definition}  }
{\theoremstyle {definition} }

\def\Tr{{\mathrm{Tr}}}

\def\g{{\mathfrak{g}}}

\def\K{{\mathrm{K_G}}}
\def\Kh{{\mathrm{K_{G\times H}}}}
\def\k{{\mathrm{K}}}
\def\ind{{\mathrm{Ind^{M|B}}}}

\def\indm{{\mathrm{Ind^{M|B}_G}}}
\def\indmsurh{{\mathrm{Ind^{M/H|B}_G}}}

\def\ext{{\mathrm{ext}}}
\def\coker{{\mathrm{coker}}}
\def\interieur{{\mathrm{int}}}

\usepackage{color}
\definecolor{darkgreen}{cmyk}{1,0,1,.2}
\definecolor{m}{rgb}{1,0.1,1}
\definecolor{green}{cmyk}{1,0,1,0}
\definecolor{darkred}{rgb}{0.55, 0.0, 0.0}
\definecolor{test}{rgb}{1,0,0}
\definecolor{cmyk}{cmyk}{0,1,1,0}



\usepackage{authblk}
\newcommand{\email}[1]{\href{mailto:#1}{#1}}

\title{The index of $G$-transversally elliptic families I}
\author[1]{Alexandre Baldare}
\affil[1]{Institut \'Elie Cartan de Lorraine, Université de Lorraine, 57070 Metz, France, \email{alexandre.baldare@univ-lorraine.fr}}

\begin{document}

\maketitle

\begin{abstract}
We define and study the index map for families of $G$-transversally elliptic operators and introduce the multiplicity for a given irreducible representation as a virtual bundle over the base of the fibration.  We then prove the usual axiomatic properties for the index map extending the Atiyah-Singer results \cite{atiyah1974elliptic}. Finally, we compute the Kasparov intersection product of our index class against the $\k$-homology class of an elliptic operator on the base.  Our approach is based on the functorial properties of the intersection product, and relies on some constructions due to Connes-Skandalis   and to Hilsum-Skandalis. 
  \medskip\\
  \textbf{Keywords:} Transversally elliptic operators, $\k\k$-theory, $\k$-homology, compact group actions, crossed product algebras, fibration.
  \smallskip\\
  \textbf{MSC2010 classification:} 	
19K35, 
19K56, 
19L47. 

\end{abstract}

\tableofcontents

\section{Introduction}

The study of transversally elliptic operators was initiated in the seminal work of Atiyah-Singer, see  \cite{atiyah1974elliptic}. In this latter paper, although the equivariant Fredholm index is not well defined in general for such operators, Atiyah and Singer  proved that any irreducible representation appears with a finite multiplicity and introduced an important index invariant, which is now a slowly increasing distribution on the given compact Lie group that they called the {\em{distributional index}}. They also proved many axiomatic properties for this distributional index, which allowed them to reduce its computation to the case of actions of tori on euclidian spaces and therefore to fully compute it in many interesting cases including locally free actions of general compact Lie groups, and general circle actions. Although an important progress has been achieved during the last decades, the problem of computing the Atiyah-Singer distributional index for transversally elliptic operators, or equivalently the multiplicities of the irreducible representations, is  still open. The goal of this paper is to extend Atiyah-Singer results to families of transversally elliptic operators using the powerful machinery of Kasparov's bivariant K-theory through the deep  associativity of the  intersection product \cite{Kasparov:KKtheory}.

\medskip

Let $G$ be a compact Lie group and let $M$ be a compact $G$-manifold. 
A $G$-transversally elliptic pseudodifferential operator on $M$ is a $G$-invariant pseudodifferential operator whose principal symbol becomes invertible when it is restricted to (non-zero) orbit transverse covectors of the $G$-action. For this class of operators, Atiyah and Singer defined an index class $\mathrm{Ind}(P)$ living in the space $C^{-\infty}(G)^{Ad(G)}$ of $Ad$-invariant distributions on $G$, i.e.
$$
\mathrm{Ind}(P) \in C^{-\infty}(G)^{Ad(G)}.
$$
More precisely, for a $G$-transversally elliptic operator $P$, the evaluation of the index class against a smooth function $\varphi \in C^{\infty}(G)$ is defined by the formula (see \cite{atiyah1974elliptic}) 
$$
\mathrm{Ind}(P)(\varphi)= \int_G \varphi(g) \big(\Tr_{|\ker(P)}(g) - \Tr_{|\coker(P)}(g)\big) dg,
$$
and induces the so-called Atiyah-Singer index map
$$
\mathrm{Ind} : \K(T^*_GM) \longrightarrow C^{-\infty}(G)^{Ad(G)}.
$$ 

In \cite{julg1982induction} Julg associated with any $G$-transversally elliptic operator $P$, a K-homolo\-gy class $[P]\in\k\k(C(M)\rtimes G ,\mathbb{C})$ whose restriction class in $\k\k(C^*G , \mathbb{C})$ is an index class closely related to the Atiyah-Singer distribution through the Connes-Chern character. In \cite{BV:IndEquiTransversal}, Berline and Vergne proved an index theorem for $G$-transversally elliptic operators in the context of equivariant cohomology. More recently, Kasparov has extended all known K-theory constructions for transversally elliptic operators to a larger class of proper actions on complete manifolds. In this context, although the index class is not defined, the Julg class $[P]$ still makes sense and plays the role of the index class, see \cite{Kasparov:KKindex}. It is worth pointing out that Kasparov even proved many index theorems as equalities in his bivariant $\k$-theory.

\medskip

In this paper we investigate the extension of classical Atiyah-Singer results to families of transversely elliptic operators and restrict ourselves to the case of compact Lie groups actions on compact fibrations. Notice that  the  case of equivariant elliptic famillies and even equivariant longitudinal elliptic operators on foliations are nowadays classical,  see for instance \cite{Benameur:LongLefschetzKtheorie, Benameur:thmFamilleLefschetz, Benameur:flat:bundles}.
As in the case of non-equivariant elliptic families \cite{Atiyah-Singer:IV}, we mainly allow the base of our fibration to be a compact space, as the smoothness condition is only requiered to compute the Kasparov's intersection product with an elliptic operator on $B$. Let $p : M \rightarrow B$ be a given compact locally trivial $G$-fibration and $T^VM=\ker p_*$ be the vertical tangent bundle.
Using a $G$-invariant metric on $M$, we identify the cotangent bundle $T^*M$ with the tangent bundle $TM$.
Assuming that $G$ acts trivially on the base $B$, a family of pseudodifferential operators $A=(A_b)_{b\in B}$ in the sense of \cite{Atiyah-Singer:IV} will be $G$-transversally
elliptic if its restrictions to the fibers are $G$-transversally elliptic pseudodifferential operators. In other words, we assume that the restriction to non-zero covectors of the principal symbol $\sigma (A)$ of the family $A$ is invertible  in $T^V_GM : =T^VM \cap T_G^*M$, using standard notations, see the next section.
Following \cite{julg1982induction} and \cite{Kasparov:KKindex}, we associate with such family of $G$-transversely elliptic operators an index class, now living in the bivariant Kasparov group $\k\k(C^*G,C(B))$.

\begin{defi}
Given a $G$-invariant family $P_0 : C^{\infty{,}0}(M{,}E^+) \rightarrow C^{\infty {,} 0}(M{,}E^-)$ of $G$-transversally elliptic operators, the index class $\indm (P_0)$ of $P_0$  is defined as the $\k\k$-class:
$$
\indm (P_0) \; :=\; [\mathcal{E},\pi,P]\;\; \in \k\K(C^*G,C(B)),
$$
where $\mathcal{E}$ is the  Hilbert $C(B)$-module associated with  the hermitian $\Z_2$-graded bundle $E$,  $\pi$ is the representation of $C^*G$ on $\mathcal{E}$ induced by the $G$-action  on $E$, and $P=\begin{pmatrix} 0& P_0^*\\P_0&0
\end{pmatrix}$.
We denote by  $\ind(P_0)$ the image of this class in $\k\k(C^*G,C(B))$. 
\end{defi}

The index class only depends on the homotopy class of the principal symbol and  induces the following index map
$$
\ind : \k(T^V_GM) \longrightarrow \k\k(C^*G ,C(B)).
$$

Finally, the notion of K-theory multiplicity can be interpreted as a virtual bundle over the base $B$, generalizing the standard multiplicity for $G$-transversally elliptic operators \cite{atiyah1974elliptic}.\\

Following \cite{atiyah1974elliptic}, this paper is mostly devoted to the investigation of basic properties of our index map, i.e. free actions, multiplicativity, excision and also induction and behaviour with respect to topological shrieck maps.
These properties allow us to reduce the  computation of the index map to the case of a trivial fibration $B \times V \rightarrow B$, where $V$ is a Euclidean space equipped with a torus action, giving the full generalization of the Atiyah-Singer work.
For instance, the compatibility theorem can be stated as follows (see section \ref{NaturalityInduction} for the details):

\begin{thm}$\cite{atiyah1974elliptic}$\label{thm:naturalité:!}
Let $j :M\hookrightarrow M'$ be a $G$-embedding over $B$ with $M$ compact.
Then the following diagram is commutative:
$$\xymatrix{\K(T^V_GM) \ar[r]^{j_!} \ar[d]_{\ind}& \K(T^V_GM') \ar[d]^{\mathrm{Ind}^{\mathrm{M'|B}}} \\
\k\k(C^*G ,C(B)) \ar@{=}[r] &\k\k(C^*G ,C(B)).
}$$
\end{thm}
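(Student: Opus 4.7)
The plan is to follow the classical Atiyah-Singer strategy, adapted to the family and $\k\k$-theoretic setting, and reduce the statement to the multiplicativity and excision properties of the index map (both established earlier in the paper). First, apply the equivariant tubular neighborhood theorem fiberwise over $B$ to produce a $G$-invariant open neighborhood $U$ of $j(M)$ in $M'$ together with a $G$-diffeomorphism $\psi\colon N \to U$ over $B$, where $N \to M$ denotes the vertical normal bundle of the embedding (equipped with a $G$-invariant complex structure coming from a $G$-invariant metric). The topological shriek map then factors as
$$
j_! \;=\; i_!\circ \psi_*\circ \tau_N :\; \K(T^V_GM)\xrightarrow{\tau_N}\K(T^V_GN)\xrightarrow{\psi_*}\K(T^V_GU)\xrightarrow{i_!}\K(T^V_GM'),
$$
where $\tau_N$ is the equivariant Thom isomorphism of $N$ viewed fiberwise over $B$, and $i_!$ is induced by the open inclusion $U\hookrightarrow M'$.

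For the last two factors, the excision property of the index map (proved earlier in the paper) yields the equality
$$
\mathrm{Ind}^{M'|B}\circ i_!\circ \psi_* \;=\; \mathrm{Ind}^{N|B}:\;\K(T^V_GN)\longrightarrow \k\k(C^*G,C(B)).
$$
The theorem is thus reduced to the Thom identity $\mathrm{Ind}^{N|B}\circ \tau_N = \mathrm{Ind}^{M|B}$, an equality of maps $\K(T^V_GM)\to \k\k(C^*G,C(B))$.

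This last identity is precisely the content of multiplicativity of the index map with respect to the equivariant vector bundle $N\to M$, in the case where one of the factors is the Thom class. In $\k\k$-terms the Thom class $\tau_N$ is represented by a fiberwise Dolbeault operator along $N\to M$, and the identity becomes a Kasparov-product statement: the cup product of $\mathrm{Ind}^{M|B}(a)\in\k\k(C^*G,C(B))$ with this Dolbeault family recovers the index class $\mathrm{Ind}^{N|B}(\tau_N(a))$ on the total space of $N$. Given the multiplicativity theorem already proved, the main obstacle is to match the topological description of $\tau_N$ with the analytic one used in multiplicativity, i.e.\ to represent $\tau_N$ by an explicit $G$-transversally elliptic family on $N$ (the vertical Dolbeault family twisted by symbols on $M$) whose $\k\k$-class identifies with the Kasparov product prescribed by multiplicativity. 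Once this operator-theoretic incarnation of $\tau_N$ is pinned down, the two steps combine via the associativity of Kasparov's cup product and the commutativity of the diagram follows.
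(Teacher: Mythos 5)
Your overall decomposition of $j_!$ via excision into a tubular neighbourhood plus Thom isomorphism matches the paper's first step exactly, so the reduction to proving $\mathrm{Ind}^{N|B}\circ\tau_N=\mathrm{Ind}^{M|B}$ for a $G$-vector bundle $N\to M$ is fine. The gap is in how you propose to handle this Thom identity. You want to ``apply multiplicativity directly'' by pairing $\mathrm{Ind}^{M|B}(a)$ with a fibrewise Dolbeault family along $N\to M$, but the multiplicativity theorem proved in this paper (Theorem~\ref{thm:multiplicativité:indice}) only applies to honest products $M\times M'$ (or fibre products $M\times_B M'$), where the symbol genuinely splits as $\sigma(A)\,\hat\otimes\,1+1\,\hat\otimes\,\sigma(B)$ and the Hilbert module factors as $\mathcal{E}_1\rtimes G\otimes_{\pi_G}\mathcal{E}_2$. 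A vector bundle $N\to M$ is a \emph{twisted} product, not a product, and a Dolbeault operator along its fibres does not sit inside any of the $\k\k$-groups appearing in that theorem: there is no tensor decomposition of $\mathcal{E}_N$ over $C(B)$, nor a natural algebra over which to form the Kasparov product. The ``operator-theoretic incarnation of $\tau_N$'' you ask for does not exist in the form your argument requires.

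The paper's proof supplies precisely the missing device: it writes $N=P\times_{O(n)}\mathbb{R}^n$ with $P$ the orthonormal frame bundle, so that $N$ \emph{is} a quotient of a genuine product $P\times\mathbb{R}^n$ by the free $O(n)$-action. Multiplicativity is then applied to $P\times\mathbb{R}^n$, where the Thom class is literally an exterior product with the Bott class $i_!(1)\in\k_{O(n)}(T\mathbb{R}^n)$ and $\mathrm{Ind}(i_!(1))=1$; afterwards the free action theorem (Theorem~\ref{thm:action:libre}) is invoked twice to transport the identity from $P\times\mathbb{R}^n$ and $P$ down to $N$ and $M$ respectively, exactly cancelling the passage to the frame bundle. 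Without this frame-bundle step your argument cannot close: you would need a multiplicativity theorem for vertical families on the total space of an associated bundle, which is a strictly stronger statement than what the paper proves. Incorporating the free action theorem, together with the equality $j^{G\times O(n)}(1)=1$, is where the actual work lies.
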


For a given elliptic operator $Q$ on the base $B$ and using results from \cite{hilsum1987morphismes}, we compute the Kaspavov product of our index class with the $\k$-homology class of $Q$ by exploiting the convenient description of the index class through the morphism induced from the representation ring $R(G)$ to the topological $\k$-theory $\k(B)$. This latter intersection product computation is crucial to define a distributional index à la Atiyah for families of $G$-transversally elliptic operators, and to prove a Berline-Vergne formula for families. 
These cohomological results are postponed to the forthcoming paper \cite{baldare:cohomologie}.

\medskip

This paper is organized as follows. In Section 2, we define the index class for families of $G$-transversally elliptic operators and
to introduce the $\k$-multiplicity. In Section 3 we define the index mapping and investigate its basic properties. More precisely, we formulate exact computations for free $G$-actions, prove the expected multiplicativity theorem as well as the excision theorem which are now equalities in the appropriate $\k\k$-groups.
Induction for closed subgroups and especially for the maximal torus is investigated together with the compatibility with shrieck maps.
In Section 4 and 5, we study the index class as a morphism between $R(G)$ and $\k(B)$ and compute the Kasparov product with an elliptic operator on the base, preparing the equivariant cohomological computations of our Berline-Vergne theorem.

 \medskip

\medskip
\noindent
\textbf{Acknowledgements.}
This work is part of my PhD thesis under the supervision of M.-T. Benameur. I would like to thank my advisor for very helpful discussions, comments and corrections. 
I also thank W. Liu, P.-E. Paradan and V. Zenobi for several conversations during the preparation of this work. I am also indebted to M. Hilsum, P. Piazza, M. Puschnigg and G. Skandalis for reading the PhD version of this work and for their constructive suggestions. I would also like to thank
the referee for several very useful suggestions. Last but not least, I would like to thank P. Carrillo Rouse for his interest, insight, and useful discussions during the \textit{Workshop on Index Theory, Interactions and Applications} in Toulouse.

\section{The index class of a family of $G$-transversally elliptic operators}
In this section we define the index class of a family of $G$-transversally elliptic operators and we introduce the $\k$-multiplicity of a unitary irreducible representation in the index class, a virtual vector bunde over the base $B$.

\subsection{Some preliminary results}\label{C(B):module}
This first paragraph  is devoted to a brief review of some standard results.
For most of the classical properties of  Hilbert $C^*$-modules and regular operators between them that we use here, we refer the reader to  \cite{lance1995hilbert} and \cite{C*algebre:C*module}.
The constructions given below extend the standard ones, see for instance  \cite{fox1994index}, \cite{hilsum2010bordism} and \cite{julg1988indice}.
Our hermitian scalar products will always be linear in the second variable and anti-linear in the first.
Let $G$ be a compact group and denote by $C^*G$ the $C^*$-algebra associated with $G$. Recall that any irreducible unitary representation of  $G$ naturally identifies with a finitely generated projective module on  $C^*G$ \cite{julg1981produit:croise}.

Let $p:M\rightarrow B$ be a $G$-equivariant fibration of compact manifolds, with typical fiber $F$, a compact manifold. We denote by $M_b=p^{-1}(b)$ the fiber over $b \in B$, by $T^VM = \textrm{ker}~ p_*$ the vertical subbundle of $TM$, and by $T^VM^*$ its dual bundle. We choose a $G$-invariant riemannian metric on $M$ and hence will identify  $T^VM^*$ with a  subbundle of $T^*M$ when needed.

Let $\pi : E=E^+\oplus E^- \rightarrow M$ be a $\Z_2$-graded vector bundle on $M$ which is assumed to be $G$-equivariant with a fixed $G$-invariant hermitian structure. Denote by $\mathcal{P}^m(M,E^+,E^-)$ the space of (classical) continuous families of pseudodifferential operators on $M$ as defined in \cite{Atiyah-Singer:IV}. A family $P\in \mathcal{P}^m(M,E^+,E^-)$ is $G$-equivariant if $g\cdot P=g\circ P \circ g^{-1}=P$ for any $g\in G$. 
As usual, $C^{\infty , 0}(M, E)$ will be  the space of continuous fiberwise smooth sections of $E$ over $M$, see \cite{Atiyah-Singer:IV}.

We fix from now on a $G$-invariant continuous family of Borel measures $(\mu_b)$ which are of Lebesgue class, constructed using a partition of unity of $B$. So,  
for any $f\in C(M)$, the map $b\rightarrow \int_{M_b}f(m)d\mu_b(m)$ is continuous, and each measure $\mu_b$ is fully supported in the fiber $M_b$. 
Since  $E$ is equipped with a hermitian structure, the $C(B)$-modules $C^{\infty ,0 }(M,E^\pm)$ of continuous fiberwise smooth sections over $M$, are naturally equipped with the structure of  pre-Hilbert right $G$-equivariant $C(B)$-modules with the inner product given by:
$$
\langle s,s'\rangle (b) =\int_{M_b}\langle s(m),s'(m)\rangle_{E^{\pm}_m}d\mu_b(m), \quad \text{ for } s, s'\in C^{\infty ,0 }(M,E^\pm).
$$
We denote by $\mathcal{E}^{\pm}$ the completion of $C^{\infty , 0}(M,E^\pm)$ with respect to this Hilbert structure. So, $ \mathcal{E} = \mathcal{E}^+\oplus  \mathcal{E}^-$ is our $G$-equivariant $\Z_2$-graded Hilbert  module on $C(B)$.

\begin{prop}\label{prop:pi*rep}
We define an involutive representation $\pi $ of $C^*G$ in the Hilbert $G$-module $\mathcal{E}$ by setting: 
$$
(\pi(\varphi)s)(m):=\int_G\varphi(g)(g\cdot s)(m)dg, \quad \forall s\in \mathcal{E},  \varphi \in L^1(G) .
$$\label{remarque:piGequi}\noindent
Moreover, if we endow $C^*G$ with the conjugation $G$-action, then the representation $\pi$ is $G$-equivariant.
\end{prop}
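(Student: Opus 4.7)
The plan is to verify, in order, well-definedness and boundedness of $\pi(\varphi)$ on $\mathcal{E}$ for $\varphi \in L^1(G)$, the $*$-algebra identities on $L^1(G)$, extension to $C^*G$, and finally the equivariance relation. All four steps are routine computations once one records the key observation that each $g\cdot : \mathcal{E}\to\mathcal{E}$ is a $C(B)$-linear unitary on the Hilbert module; this follows from the $G$-invariance of the hermitian metric on $E$ together with the $G$-invariance of the family of Borel measures $(\mu_b)$ chosen in the preceding paragraph.

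First I would define $\pi(\varphi)$ on the dense submodule $C^{\infty,0}(M,E)$ by the displayed formula, interpreted as a Bochner integral in $\mathcal{E}$ (or pointwise in $m$, which amounts to the same after integrating against $\mu_b$). The triangle inequality combined with the unitarity of the $G$-action on $\mathcal{E}$ yields $\|\pi(\varphi)s\|_{\mathcal{E}} \leq \int_G |\varphi(g)|\, \|g\cdot s\|_{\mathcal{E}}\, dg = \|\varphi\|_1 \|s\|_{\mathcal{E}}$, so $\pi(\varphi)$ extends to a bounded $C(B)$-linear operator on $\mathcal{E}$ with $\|\pi(\varphi)\|\leq\|\varphi\|_1$.

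Second, the algebraic relations on $L^1(G)$: linearity in $\varphi$ is immediate, multiplicativity $\pi(\varphi*\psi)=\pi(\varphi)\pi(\psi)$ follows from Fubini together with $(gh)\cdot s = g\cdot(h\cdot s)$, and the adjoint identity $\pi(\varphi)^*=\pi(\varphi^*)$ with $\varphi^*(g)=\overline{\varphi(g^{-1})}$ follows from the change of variables $g\mapsto g^{-1}$ (valid since $G$ compact is unimodular) combined with the fact that $g\cdot$ is a unitary on the Hilbert module, so $\langle g\cdot s, s'\rangle = \langle s, g^{-1}\cdot s'\rangle$ as $C(B)$-valued pairings. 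In particular $\pi(\varphi)$ is adjointable on $\mathcal{E}$. Since $\pi: L^1(G)\to \mathcal{L}(\mathcal{E})$ is a $*$-homomorphism bounded in the $L^1$-norm, it is in particular bounded in the universal $C^*$-norm and extends by continuity to a $*$-representation of $C^*G$ on $\mathcal{E}$.

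Third, for the equivariance statement, I unpack the conjugation action $(g\cdot\varphi)(h) = \varphi(g^{-1}hg)$ and compute $\pi(g\cdot\varphi)s = \int_G \varphi(g^{-1}hg)\,(h\cdot s)\, dh$. The substitution $h = gkg^{-1}$ combined with Haar invariance and $(gkg^{-1})\cdot s = g\cdot(k\cdot(g^{-1}\cdot s))$ gives $\pi(g\cdot\varphi)s = g\cdot\pi(\varphi)(g^{-1}\cdot s)$, i.e.\ $\pi(g\cdot\varphi)=g\circ\pi(\varphi)\circ g^{-1}$, which is the $G$-equivariance of $\pi$. There is no genuine obstacle; the only point requiring care is to use $G$-invariance of the measure family $(\mu_b)$ at the right moment to ensure that $g\cdot$ really preserves the $C(B)$-valued inner product, since everything else is standard convolution algebra for $C^*G$ transported to the Hilbert module setting.
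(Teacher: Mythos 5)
Your proposal is correct and follows essentially the same route as the paper: bound $\pi(\varphi)$ using unitarity of $g\cdot$ on $\mathcal{E}$ (which, as you note, rests on $G$-invariance of the metric and of $(\mu_b)$), verify the $*$-homomorphism identities on $L^1(G)$ by Fubini and the substitution $g\mapsto g^{-1}$, and obtain equivariance via the substitution $k=g^{-1}hg$. The only cosmetic difference is that the paper bounds $\|\langle s',\pi(\varphi)s\rangle\|$ by Cauchy–Schwarz to establish adjointability, whereas you bound $\|\pi(\varphi)s\|$ directly by the triangle inequality; both then pass to $C^*G$ by density.
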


\begin{proof}
Let $s,s' \in \mathcal{E}$ and $\varphi \in L^1(G)$. By using the $G$-invariance of the metric on $E$ together with a Cauchy-Schwartz argument, we deduce that
$$
\|\langle s',\pi(\varphi )s\rangle \|_{C(B)}\leq \int_G|\varphi(g)|\|s\|\|s'\|dg\leq \|\varphi\|_{L^1(G)}\|s\|_{\mathcal E}\|s'\|_{\mathcal E}.
$$
Recall that the action of $G$ on $C(B)$ is trivial here. We then easily see that $\pi(\varphi )$ is an adjointable operator. 
Again by the $G$-invariance of the hermitian metric on $E$,  and using the $G$-invariance of the family of measures $(\mu_b)_{b\in B}$, we also get that $\pi(\varphi)^*=\pi(\varphi^*)$.
Finally, the relation $\pi(\varphi \star \psi)=\pi(\varphi)\circ \pi(\psi)$ is verified as follows: 
\begin{eqnarray*}
\pi(\varphi \star \psi)s&=&\int_G(\varphi \star \psi)(g)g\cdot s ~dg\\
&=&\int_G\int_G\varphi(h)\psi(h^{-1}g)g\cdot s ~dgdh.
\end{eqnarray*}
So setting $ h^{-1}g=k $, we get $\pi(\varphi \star \psi) = \pi(\varphi) \circ \pi(\psi)$.\\
\noindent
On the other hand, we also have $\pi(g\cdot \varphi )s=\int_G \varphi (g^{-1}hg)h\cdot s dh$ and 
setting $k=g^{-1}hg$ we get
$$
\pi(g\cdot \varphi )s=\int_G \varphi (k)gkg^{-1}\cdot s dk=(g\cdot \pi (\varphi))s. 
$$
Moreover, we have:
$$
(g\cdot \varphi g\cdot \psi )(h)=\int_G (g\cdot \varphi) (k) (g \cdot \psi )(k^{-1}h)dk
=\int_G  \varphi (g^{-1}kg) \psi (g^{-1}k^{-1}hg)dk.
$$
Finally, setting  $g^{-1}hg=t$, we obtain:
$$
(g\cdot \varphi g\cdot \psi )(h)=\int_G  \varphi (t) \psi (t^{-1}g^{-1}hg)dt=(g\cdot (\varphi \star \psi ))(h).
$$ 
By density of the range of $L^1(G)$ in $C^*G$, we get the result.
\end{proof}

\begin{remarque}
The Hilbert $C(B)$-module $\mathcal{E}$  is  associated with the continuous field of Hilbert spaces $(\prod\limits_{b\in B}L^2(M_b,E_b),\Delta)$, where $\Delta $ is the space of continuous sections of $E$, see \cite{dixmier1963champs}. 
\end{remarque}

Since $C^{\infty ,0}(M,E^{+})^{\bot}=\{0\}$, any pseudodifferential family $Q$ as before is densely defined and admits a  formal adjoint with respect to the inner product of ${\mathcal E}$ which will be denoted $Q^*$.  The domain  of $Q^*$ is
$$ 
{\rm{Dom}}(Q^*)=\{s \in \mathcal{E}^-/ \exists s'\in \mathcal{E}^+~\mathrm{such~that}~ \langle Qs'',s\rangle =\langle s'',s'\rangle ,~s''\in Dom(Q) \}.
$$
Recall that $Q^*$ is defined by its graph $G(Q^*)$ given by
$$
G(Q^*)=\{(s',s)\in \mathcal{E}^-\times \mathcal{E}^+,\forall s''\in C^{\infty ,0}(M,E^{+}), \langle s,s''\rangle =\langle s',Qs''\rangle \}.
$$
If $Q$ has pseudodifferential order $m$ and principal  symbol $q$, then $Q^*$ is also a continuous family of pseudodifferential operators of order $m$ with principal symbol $q^*_b(m,\xi)$. If $Q$ is $G$-equivariant, then so is $Q^*$.


\subsection{The index class}

Assume now that $G$ is a compact {\em Lie} group with Lie algebra $\g$, and that the action of $G$ on the base $B$ is trivial.
We start by extending some results from \cite[Section 6]{Kasparov:KKindex} and use the notations from there. For $m\in M$, we denote by $f_m : G \rightarrow M$ the map given by  $f_m(g)=g\cdot m$,  by $f^{'}_m : \g \rightarrow T_mM$ its tangent map at the neutral element of $G$ and  by $f_m^{'*} : T^*_mM \rightarrow \g^*$  the dual map.
So, any $X\in \g$ defines the vector field $X^*$ given by $X^*_m:=f^{'}_m (X)$ which,  under our assumptions, is a vertical vector field, i.e. it belongs to $T^VM$. Notice also that 
$g\cdot f_m^{'}(X)=f^{'}_{g\cdot m}(\text{Ad}(g)X)$, for any $g\in G$, $m\in M$ and $X\in \g$ \cite{Kasparov:KKindex}. 
\noindent 
Let $\g_M:=M\times \g$ be the $G$-equivariant trivial bundle of Lie algebras on $M$, associated to $\g$ for the action $g\cdot (m,v)=(g\cdot m,\text{Ad}(g)v)$. 
We  endow $\g_M$ with a $G$-invariant metric and we will denote by $\|\cdot \|_m$ the associated family of Euclidean norms. The map $f^{'} :\g_M \rightarrow TM$ defined by $f^{'}(m,v)=f^{'}_m(v)$ is a $G$-equivariant vector bundle morphism. Up to normalization, we can always assume that $\forall v\in \g$, $\|f^{'}_m(v)\|\leq \|v\|_m$.  Here $\|f^{'}_m(v)\|$ is the norm given by the riemannian metric at $m$. We thus assume from now on that  $\|f^{'}_m\|\leq 1$, $\forall m\in M$. These metrics on $\g_M$ and $TM$ are also used to identify $\g_M$ with $\g_M^*$ and $TM$ with $T^*M$.  Then we can define the map $\phi : T^*M \rightarrow T^*M$ by setting $\phi_m=f^{'}_mf^{'*}_m$. Again according to the Kasparov's notations \cite{Kasparov:KKindex}, we introduce the quadratic form $q=(q_m)_{m\in M}$ on the fibers of $T^*M$ by setting: 
$$
q_m(\xi)=|\langle f^{'}_mf^{'*}_m(\xi),\xi\rangle |=\|f^{'*}_m(\xi)\|_m^2,~\forall (m,\xi)\in T^*M.
$$
We have $q_m(\xi)\leq \|\xi\|^2$, $\forall (m,\xi)\in T^*M$.
If  $\xi \in T^*_mM$, then it is easy to see that  $ \xi $ is orthogonal to the $G$-orbit   of $m$ if and only if $q_m (\xi) = 0$.

\begin{defi}
Denote by $T_GM:=\{(x,\xi)\in TM /q_m(\xi)=0\}$ the closed subspace of $TM$. We define 
$$
T^V_GM := T_GM\cap T^VM.
$$
\end{defi}

Following \cite{Kasparov:KKindex} which extends the standard definitions, see \cite{atiyah1974elliptic} and \cite{BV:ChernCharacterTransversally}, we state two definitions of families of $G$-transversally elliptic operators,  the naive definition and the technical definition.

\begin{defi}[naive definition]\label{ope:trans:elliptic:naïve}
We will say that a $G$-equivariant selfadjoint family $A$ of pseudodifferential operators acting on a vector bundle $E=E^+\oplus E^-$ of order $0$, odd for the graduation of $E$ is a family of $G$-transversally elliptic operator or that $A$ is a $G$-transversally elliptic family if 
$$\sup \limits_{m\in M}\|\sigma_A(m,\xi)^2-\rm{id}_{E_m}\|\rightarrow 0$$
when $(m,\xi) \rightarrow \infty$ in $T^V_GM$.

\end{defi}

\begin{remarque}
By a classical argument, the symbol of a $G$-transversally elliptic family  as in Definition \ref{ope:trans:elliptic:naïve} represents a class in the Kasparov $\k$-theory group 
$\k\K(\mathbb{C},C_0(T_G^VM))$ which is simply given by the Kasparov cycle $(C_0(\pi^*E),\sigma_A)$, where $C_0(\pi^*E)$ is the space of continuous sections of $\pi^*E \rightarrow T^V_GM$ vanishing at infinity. Hence, it defines a class in the topological $G$-equivariant $\k$-theory group
$\K (T_G^VM)$.
\end{remarque}

\begin{defi}[technical definition]\label{ope:trans:elliptic:technique}
We will say that a $G$-equivariant selfadjoint family $A$ of pseudodifferential operators of order $0$,  is a family of $G$-transversally elliptic operators or that $A$ is a $G$-transversally elliptic family if its principal symbol $\sigma_A$ satisfies the following condition: \\
$\forall \varepsilon >0, \exists c>0\text{ such that for any } (m, \xi) \in T^VM, \text{ we have}$
$$\|\sigma_A^2(m,\xi)-\rm{id} \|_{(m,\xi)}\leq c(1+q_m(\xi))(1+\|\xi\|^2)^{-1}+\varepsilon$$
where $\|\cdot\|_{(m,\xi)}$ means the operator norm on $E_m$.
\end{defi}

\noindent
The following proposition is stated in \cite{Kasparov:KKindex} in the case of a single operator:

\begin{prop}\label{prop:inégalité:preuve:thm}
Let $T$ be a selfadjoint family of pseudodifferential operators of order $0$ on $M$, with principal symbol $\sigma_T$. Denote by $F$ a selfadjoint family of pseudodifferential operators with principal symbol given by $\sigma_F(m,\xi)=(1+q_m(\xi))(1+|\xi|^2)^{-1} $. Suppose that $\forall \varepsilon >0$, $\exists c>0$ such that $\forall (m, \xi)\in T^VM$
$$
\|\sigma_T(m,\xi)\|\leq c~\sigma_F(m,\xi)+\varepsilon.
$$
Then $\forall \varepsilon >0$, there exist $c_1$, $c_2>0$ and two selfadjoint families of integral pseudodifferential operators with continuous (vertical) kernel such that
$$
-(c_1F+\varepsilon +R_1)\leq T\leq c_2F+\varepsilon +R_2.
$$
\end{prop}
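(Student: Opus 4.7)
The strategy is to convert the pointwise symbol bound into an operator bound via a families version of the sharp Gårding inequality, with the smoothing remainder depending continuously on the base parameter $b \in B$.

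First, I would rewrite the hypothesis $\|\sigma_T(m,\xi)\| \leq c\,\sigma_F(m,\xi) + \varepsilon$ as the two matrix inequalities
$$
\sigma_\pm(m,\xi) \;:=\; \left( c\,\sigma_F(m,\xi) + \varepsilon \right) \mathrm{id}_{E_m} \;\mp\; \sigma_T(m,\xi) \;\geq\; 0 \quad \text{on } T^VM,
$$
so that $\sigma_+$ and $\sigma_-$ are two self-adjoint non-negative classical symbols of order $0$ on the vertical cotangent bundle, varying continuously with $b$. The task reduces to producing, for each of them, a self-adjoint continuous family of integral operators $K_\pm$ with continuous vertical kernel such that the quantisations $\mathrm{Op}(\sigma_\pm) + K_\pm$ are non-negative operators on the Hilbert $C(B)$-module $\mathcal{E}$.

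Second, I would invoke the families version of the sharp Gårding inequality. For a single fibre this is the statement used in \cite{Kasparov:KKindex}: given a non-negative principal symbol, there exists a self-adjoint correction of order $-1$ that makes the resulting operator non-negative, and iterating order by order one arranges the correction to be smoothing. I would run the same argument with parameters by working in local trivialisations of $p : M \to B$, using a partition of unity of $B$ to glue the fibrewise Friedrichs symmetrisations into a continuous family. Since the whole construction — Friedrichs quantisation, extraction of principal symbols at each iteration step, and asymptotic summation over decreasing orders — is functorial in continuously varying data, the resulting corrections $K_\pm$ inherit continuity in $b$ of their vertical Schwartz kernels.

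Third, combining the two inequalities $\mathrm{Op}(\sigma_\pm) + K_\pm \geq 0$, and using that $\mathrm{Op}(c\,\sigma_F + \varepsilon)$ differs from $cF + \varepsilon\,\mathrm{Id}$ by a continuous family of pseudodifferential operators of order $-1$ (hence bounded and approximable by a smoothing family up to arbitrarily small operator norm), one rearranges to get
$$
-(c_1 F + \varepsilon + R_1) \;\leq\; T \;\leq\; c_2 F + \varepsilon + R_2,
$$
where $R_1, R_2$ are self-adjoint continuous families of integral operators with continuous vertical kernel, after a cosmetic adjustment of the constants $c_1, c_2$ to absorb the order $-1$ discrepancy into $cF$. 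The main obstacle I anticipate is the continuity in $b$ of the smoothing remainder produced by the iterative construction: pointwise in $b$ each step is standard, but one must check that the asymptotic summation of continuously varying order $-k$ corrections yields a genuine smoothing family whose vertical kernel is continuous on the fibre product $M \times_B M$. This is the families analogue of the classical stability of sharp Gårding under continuous deformation of the symbol, and it relies on the choice of a continuous system of local trivialisations of $p : M \to B$, in the spirit of the continuous family pseudodifferential calculus of \cite{Atiyah-Singer:IV}.
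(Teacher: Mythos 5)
Your proposal is in the same spirit as the paper's proof: both reduce the statement to a Gårding/Hörmander-type positivity result applied to the two operators $cF+\varepsilon-T$ and $cF+\varepsilon+T$. The paper's proof is shorter because it invokes a prepared families version of H\"ormander's factorisation (Proposition \ref{prop:limiteinffam} in the appendix, citing H\"ormander's theorem~2.2.1 and proposition~2.2.2), which directly produces an operator $S$ with $S^*S - Q$ having continuous kernel whenever the selfadjoint order-$0$ family $Q$ has $\varliminf_{\xi\to\infty}\mathrm{Re}\,\sigma_Q > 0$.

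There is, however, a genuine gap in your argument, and it is not the one you flag. You define $\sigma_\pm = (c\sigma_F + \varepsilon)\,\mathrm{id} \mp \sigma_T$ and note only $\sigma_\pm \geq 0$; these symbols may vanish, and for a merely non-negative (matrix-valued) order-$0$ symbol the sharp Gårding inequality gives one step of improvement (order $-1$) but does not iterate to produce a smoothing correction, because $\sqrt{\sigma_\pm}$ fails to be a symbol near the zero set. The paper sidesteps this by first replacing $\varepsilon$ by some $\varepsilon' > \varepsilon$: since the hypothesis gives a $c$ with $\|\sigma_T\|\leq c\sigma_F + \varepsilon$, one has $c\sigma_F + \varepsilon' - \mathrm{Re}\,\sigma_T \geq \varepsilon'-\varepsilon > 0$ everywhere, so $\varliminf_{\xi\to\infty}\mathrm{Re}\,(c\sigma_F + \varepsilon' - \sigma_T) > 0$, and the factorisation of Proposition \ref{prop:limiteinffam} applies without any Fefferman--Phong-type subtlety; at the end one lets $\varepsilon'\to\varepsilon$. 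This $\varepsilon'>\varepsilon$ slack is the key missing idea in your write-up. The families/continuity-in-$b$ issue you identify as the "main obstacle" is real but is already packaged into the appendix results on continuous families of pseudodifferential operators, and is not where the argument actually threatens to break.
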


\begin{proof}\ We adapt the classical proof to our setting, see also Appendix \ref{Appendix}. 
For $\varepsilon'>\varepsilon$, we have:
\begin{equation}\label{preuve:liminf}
\varliminf \limits_{\substack{\xi \to \infty \\ \xi \in T^VM^*}}  \mathrm{Re}\big(c\sigma_F(m,\xi)+\varepsilon'-\sigma_T(m,\xi)\big)>0,
\end{equation} 
since $\left\vert\mathrm{Re}\left(\sigma_T(m,\xi)\right)\right\vert\leq \|\sigma_T (m, \xi)\|_{E_m} \; \rm{id}$. Indeed, we have
$$
0< c~\sigma_F(m,\xi)+\varepsilon' -\mathrm{Re}\left(\sigma_T(m,\xi)\right)=\mathrm{Re}\left(c~\sigma_F(m,\xi)+\varepsilon' -\sigma_T(m,\xi)\right).
$$
By Proposition \ref{prop:limiteinffam}, we can find  families of operators $S_2$ and $R_2$ such that 
$$
S_2^*S_2-(cF+\varepsilon'-T)=R_2\text{ and hence } 0\leq -T+cF+\varepsilon' + R_2.
$$
 Replacing $T$ by $-T$, the inequality  \eqref{preuve:liminf} remain true. Hence, we can also find families of operators $S_1$ and $R_1$ such that $0\leq T+cF+\varepsilon'+R_1$. When $\varepsilon'\to \varepsilon$, we get the result.
\end{proof}

Let now $P_0 : C^{\infty ,0}(M,E^+) \rightarrow C^{\infty ,0}(M,E^-)$ be a $G$-invariant family of $G$-transversally elliptic pseudodifferential operators of order $0$. We will denote by $P : C^{\infty ,0}(M,E) \rightarrow C^{\infty ,0}(M,E)$, the pseudodifferential family $\begin{pmatrix}
0&P_0^*\\P_0&0
\end{pmatrix}$.

\begin{remarque}\label{lem:Ppi=piP}
Since $P$ is $G$-invariant, for any  $\varphi \in C^*G$, we have $[\pi(\varphi),P]=0$.
\end{remarque}

We fix, for later use,  a $G$-equivariant continuous family   $\Delta_G$ of second order differential operators (along the fibers) with the principal symbol $q$, see also \cite{atiyah1974elliptic}.
If $X\in \g$ and $s\in C^{\infty ,0}(M,E)$, we denote by  ${\mathscr L} (X)(s)$ the Lie derivative associated with the corresponding action, i.e.
$$
{\mathscr L} (X)(s) := \frac{d}{dt}_{\vert_{t=0}} (e^{tX}\cdot s).
$$
In the same way, using the action of $G$ on itself by left translations, we denote for any given basis $\{V_k\}$ of $\g$ with dual basis $\{v_k\}$ and any $\varphi\in C^\infty (G)$, by $\dfrac{\partial\varphi}{\partial V_k}$  the derivative along the one-parameter subgroup of $G$ corresponding to the vector $V_k$. We then define the family of differential operators 
$$d_G=(d_{G,b })_{b\in B}: C^{\infty ,0}(M,E) \rightarrow C^{\infty ,0}(M,E\otimes \g_M^*),$$
given by $d_G(s)=\sum \limits_k \mathscr{L}(V_k)s\otimes v_k$.\\
Notice that the representation $\pi$ can be extended by setting 
$$
\pi(d\varphi)s=\sum\limits_{k} \int_G \dfrac{\partial\varphi}{\partial V_k}(h)h\cdot s \otimes v_k dh.
$$

\begin{prop}$\cite{Kasparov:KKindex}$\label{prop:dG}
For $\varphi \in C^{\infty}(G)$ and $s\in C^{\infty,0}(M,E)$, we have $d_G(\pi(\varphi ) s)=\pi(d\varphi )s$.
\end{prop}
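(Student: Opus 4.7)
The plan is to compute $\mathscr{L}(V_k)(\pi(\varphi)s)$ directly from the definitions and identify the result as the $k$-th component of $\pi(d\varphi)s$. Write
\begin{equation*}
e^{tV_k}\cdot(\pi(\varphi)s) \;=\; \int_G \varphi(g)\,(e^{tV_k}g\cdot s)\,dg,
\end{equation*}
and perform the change of variable $h=e^{tV_k}g$. Since Haar measure on the compact group $G$ is left-invariant, this gives
\begin{equation*}
e^{tV_k}\cdot(\pi(\varphi)s) \;=\; \int_G \varphi(e^{-tV_k}h)\,(h\cdot s)\,dh.
\end{equation*}
This is the key move: it transfers the $t$-dependence from the $G$-action on $s$ onto the function $\varphi$, where we have enough regularity to differentiate.

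Next I would justify differentiating under the integral sign at $t=0$. Because $\varphi\in C^\infty(G)$ and $G$ is compact, the difference quotients $t^{-1}\bigl(\varphi(e^{-tV_k}h)-\varphi(h)\bigr)$ converge uniformly in $h$ to $\tfrac{\partial\varphi}{\partial V_k}(h)$ (with the sign convention fixed by the paper's definition via the action on $G$). Combined with the fact that $h\mapsto h\cdot s$ is a norm-continuous, hence norm-bounded, map from $G$ into $\mathcal{E}$, one gets uniform convergence of the integrand in the Hilbert-module norm. Dominated convergence for Bochner integrals of $\mathcal{E}$-valued continuous functions on the compact group $G$ then yields
\begin{equation*}
\mathscr{L}(V_k)(\pi(\varphi)s) \;=\; \frac{d}{dt}\bigg|_{t=0}\int_G \varphi(e^{-tV_k}h)\,(h\cdot s)\,dh \;=\; \int_G \frac{\partial\varphi}{\partial V_k}(h)\,(h\cdot s)\,dh.
\end{equation*}

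Finally, I would tensor with $v_k$, sum over $k$, and match the result against the extended definition of $\pi$ given just before the proposition, namely
\begin{equation*}
\pi(d\varphi)s \;=\; \sum_k \int_G \frac{\partial\varphi}{\partial V_k}(h)\,(h\cdot s)\otimes v_k\,dh.
\end{equation*}
Comparing with $d_G(\pi(\varphi)s)=\sum_k \mathscr{L}(V_k)(\pi(\varphi)s)\otimes v_k$ finishes the proof.

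The only genuine technical point is the differentiation under the Bochner integral, which I expect to be the main (but quite standard) obstacle; everything rests on the smoothness of $\varphi$, the compactness of $G$, and the strong continuity of the $G$-action on $\mathcal{E}$. The change of variables requires only left-invariance of Haar measure on $G$, independently of any invariance property of the fiberwise measures $(\mu_b)$.
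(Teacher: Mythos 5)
Your proof is correct and follows essentially the same route as the paper's: compute $\mathscr{L}(V_k)(\pi(\varphi)s)$, change variables $h=e^{tV_k}g$ using left-invariance of Haar measure, then differentiate under the integral at $t=0$. The only difference is that you spell out the justification of differentiating under the Bochner integral, which the paper leaves implicit.
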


\begin{proof}
 We have:
\begin{eqnarray*}
d_G(\pi(\varphi)s)&=&\sum \limits_k \mathscr{L}(V_k)\pi(\varphi)s\otimes v_k\\
&=&\sum \limits_k \dfrac{d}{dt}_{\vert_{t=0}} e^{tV_k}\pi(\varphi)s \otimes v_k\\
&=&\sum \limits_k \dfrac{d}{dt}_{\vert_{t=0}} \; \int_G \varphi(g) (e^{tV_k}g)\cdot s \otimes v_k\; dg.
\end{eqnarray*}
Substituting $e^{tV_k}g=h$, we get:
\begin{eqnarray*}
d_G(\pi(\varphi)s)&=&\sum \limits_k \dfrac{d}{dt}_{\vert_{t=0}}\; \int_G \varphi(e^{-tV_k}h) h\cdot s \otimes v_k\; dh\\
&=&\sum \limits_k \int_G \dfrac{\partial\varphi}{\partial V_k}(h) h\cdot s \otimes v_k\; dh\\
&=&\pi(d\varphi )s.
\end{eqnarray*}
\end{proof}
\noindent
We are now in a position to state the main result of this section:

\begin{thm}$\cite{Kasparov:KKindex}$
The triple $\big( \mathcal{E},\pi,P\big)$ is an even  $G$-equivariant Kasparov cycle for the $C^*$-algebras $C^*G$ and $C(B)$. It thus defines a class in $\k\K(C^*G,C(B))$.
\end{thm}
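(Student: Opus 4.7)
The plan is to verify the defining axioms of a $G$-equivariant Kasparov $(C^*G, C(B))$-cycle: $\mathcal{E}$ is a $\Z_2$-graded Hilbert $C(B)$-module, $\pi$ an even $G$-equivariant $*$-representation, $P$ an odd, regular, self-adjoint operator, and $[\pi(\varphi), P]$ together with $\pi(\varphi)(P^2 - \mathrm{id})$ belong to $\mathcal{K}(\mathcal{E})$ for every $\varphi \in C^*G$. The first point is Proposition \ref{prop:pi*rep}. The operator $P$ is off-diagonal in the $\Z_2$-grading and hence odd and formally self-adjoint; its self-adjointness as a regular operator on $\mathcal{E}$ follows from the standard continuous-family pseudodifferential calculus on the compact fibers. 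The commutator $[\pi(\varphi), P]$ vanishes identically by Remark \ref{lem:Ppi=piP}. Since $P$ is $G$-invariant and $G$ acts trivially on $C(B)$, the $G$-equivariance of the cycle reduces to the equivariance of $\pi$ already established.

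The heart of the proof is the compactness of $\pi(\varphi)(P^2 - \mathrm{id})$ for every $\varphi \in C^*G$. As the set of $\varphi$ for which this holds is a norm-closed linear subspace of $C^*G$, it suffices to treat $\varphi \in C^\infty(G)$. I would apply Proposition \ref{prop:inégalité:preuve:thm} to $T := P^2 - \mathrm{id}$: its principal symbol $\sigma_P^2 - \mathrm{id}$ satisfies the required estimate by Definition \ref{ope:trans:elliptic:technique}. The conclusion is the two-sided inequality
$$
-(c_1 F + \varepsilon + R_1) \;\leq\; P^2 - \mathrm{id} \;\leq\; c_2 F + \varepsilon + R_2,
$$
where $F$ is a self-adjoint pseudodifferential family with principal symbol $\sigma_F(m,\xi) = (1+q_m(\xi))(1+|\xi|^2)^{-1}$ and $R_1, R_2$ are smoothing along the fibers, hence $C(B)$-compact. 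A standard $C^*$-module squeeze argument then reduces the problem to showing that $\pi(\varphi) F \pi(\varphi)^*$ is $C(B)$-compact, modulo an error of norm $O(\varepsilon \|\pi(\varphi)\|^2)$ that is absorbed as $\varepsilon \to 0$; passing to adjoints then yields the compactness of $\pi(\varphi)(P^2 - \mathrm{id})$ itself.

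To prove the remaining compactness, I would split $\sigma_F = (1+|\xi|^2)^{-1} + q_m(\xi)(1+|\xi|^2)^{-1}$, so that modulo strictly negative-order fiber terms (automatically $C(B)$-compact) $F$ is the sum of a fiberwise operator of order $-2$ and a piece $F_1$ whose symbol contains the factor $q_m(\xi) = \|f^{\prime *}_m \xi\|^2$. In an orthonormal basis $\{V_k\}$ of $\g$, the latter factor decomposes as a finite sum of squares of the principal symbols of the Lie derivatives $\mathscr{L}(V_k)$; accordingly $F_1$ can be realized, modulo lower-order terms, as a finite sum $\sum_{k,\ell} \mathscr{L}(V_k)^* \circ Q_{k\ell} \circ \mathscr{L}(V_\ell)$ with each $Q_{k\ell}$ of principal symbol of order $-2$. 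Proposition \ref{prop:dG} then absorbs each factor $\mathscr{L}(V_\ell) \pi(\varphi)^*$ into $\pi(\partial\varphi^*/\partial V_\ell)$, with $\partial\varphi^*/\partial V_\ell \in C^\infty(G)$, so that $\pi(\varphi) F_1 \pi(\varphi)^*$ becomes a finite sum of products $\pi(\psi_1) \circ (\text{fiberwise smoothing}) \circ \pi(\psi_2)$, which is $C(B)$-compact.

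The main obstacle is precisely this last step: converting the transverse factor $q_m(\xi)$, which is only of order zero in the total covector $\xi$, into a gain of fiberwise regularity. This is possible solely because $\pi(\varphi)$ is smoothing along the $G$-orbit directions whenever $\varphi \in C^\infty(G)$, an effect encoded by the intertwining identity $d_G \pi(\varphi) = \pi(d\varphi)$ of Proposition \ref{prop:dG}. The whole argument closely follows Kasparov's treatment in \cite{Kasparov:KKindex} of the single-operator case; the passage to families is routine given the Hilbert $C(B)$-module framework set up in Section \ref{C(B):module}.
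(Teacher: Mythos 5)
Your proposal follows the paper's proof essentially step for step: Proposition \ref{prop:inégalité:preuve:thm} supplies the two-sided squeeze, and Proposition \ref{prop:dG} is used to absorb the orbit-derivative factor of $F$ into $\pi(d\varphi)$. Your coordinate-wise sum-of-squares decomposition of $q_m(\xi)$ over a basis $\{V_k\}$ of $\g$ is precisely what the operator $d_G=\sum_k\mathscr{L}(V_k)\otimes v_k$ packages; the paper's identification $F\equiv d_G^*(1+\Delta)^{-1}d_G$ modulo negative order is the same as your $\sum_{k,\ell}\mathscr{L}(V_k)^*Q_{k\ell}\mathscr{L}(V_\ell)$, and both conclude from $d_G\pi(\varphi)=\pi(d\varphi)$ bounded together with $d_G^*(1+\Delta)^{-1}$ of negative order hence compact.

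One imprecision deserves a flag. Since $\pi(\varphi)(\mathrm{id}-P^2)\pi(\varphi)^*$ is already self-adjoint, ``passing to adjoints'' is a no-op and does not yield the compactness of $\pi(\varphi)(\mathrm{id}-P^2)$ by itself. The step the paper actually takes is to use the $G$-invariance of $F$, $R_i$ and $P^2$ to commute $\pi(\varphi)^*$ through these operators, converting the conjugated inequality into one of the form $-(c_1F+\varepsilon+R_1)\circ\pi(\psi)\leq(\mathrm{id}-P^2)\circ\pi(\psi)\leq(c_2F+\varepsilon+R_2)\circ\pi(\psi)$ with $\psi=\varphi^*\star\varphi\geq 0$; projecting to the Calkin algebra, letting $\varepsilon\to 0$, and then writing an arbitrary $\varphi\in C^*G$ as a linear combination of nonnegative elements via functional calculus completes the argument. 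This is a short but essential bridge missing from your write-up; the rest matches.
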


\begin{proof}
The $\Z_2$-grading is clear. By Proposition \ref{prop:pi*rep}, the representation $\pi$ is $G$-equivariant and by Proposition \ref{prop:Pborné}, the operator $P$ is $C(B)$-linear and belongs to  $\mathcal{L}_{C(B)}(\mathcal{E})$.
By Remark \ref{lem:Ppi=piP}, we know that $[\pi(\varphi), P]=0$. Moreover, $P$ is selfadjoint, and it thus remain to check that $(\rm{id}-P^2)\circ \pi(\varphi )\in \mathcal{K}_{C(B)}(\mathcal{E})$. Denote by $\sigma_P$ the  principal symbol of $P$. As $P$ is a family of $G$-transversally elliptic operators, the principal symbol of $\rm{id}-P^2$ which coincides with $\rm{id}-\sigma_P^2$, satisfies the assumption of Proposition \ref{prop:inégalité:preuve:thm}. Therefore $\forall \varepsilon>0$, there exist $c_1$, $c_2>0$ and 
 selfadjoint integrals operators with continuous kernels $R_1$ and $R_2$ such that
$$
-(c_1F+\varepsilon +R_1)\leq 1-P^2 \leq c_2F+\varepsilon +R_2,
$$ 
where $F$ is a continuous family of operators with symbol $\sigma_F(m,\xi)=(1+q_m(\xi))(1+|\xi|^2)^{-1}$. 
We may take for $\Delta_G$ the operator $d^*_Gd_G$. Indeed, the symbol of $d_G$ at $(m,\xi) \in T^VM$ is given by $\sum \limits_{k}i\langle \xi,f'_m(V_k)\rangle \ext (v_k)=\sum \limits_{k}i\langle f'^{*}_{m}(\xi ),(V_k)\rangle \ext (v_k)=i~ \ext(f'^{*}_{m}(\xi))$. So the symbol of $d_G^*$ is given by $-i~\interieur (f'^{*}_{m}(\xi))$. Then we get that the symbol of $d_G^*d_G$ is given by $\langle f'^{*}_{m}(\xi),f'^{*}_{m}(\xi)\rangle =q_m(\xi)$ modulo symbols of order $1$. 
Denote also by $\Delta$ a vertical Laplace operator associated with the fixed metric, that is a family of second order differential operators with principal  symbol $\|\xi\|^2$ for $(m,\xi)\in T^VM^*$.  Modulo families of operators of negative order, the family of operators $F$ coincides with the operator $d_G^*(1+\Delta)^{-1}d_G$.

By Proposition \ref{prop:dG}, we know that $d_G\circ \pi (\varphi ) =\pi(d\varphi )$ is a bounded operator on $\mathcal{E}$. Moreover, $d_G^*(1+\Delta)^{-1}$ has negative order, so by Corollary \ref{cor:compact}, it is a compact operator of the Hilbert module $\mathcal E$. It follow that $d_G^*(\rm{id}+\Delta )^{-1}d_G\pi(\varphi )$ is compact as well. 
In order to show that the operator $(\rm{id}-P^2)\circ \pi(\varphi )$ is compact, we first notice that for $\psi =\varphi^*\star \varphi$, we have: 
$$-\pi(\varphi )^*\circ (c_1F+\varepsilon +R_1)\circ \pi(\varphi )\leq \pi(\varphi )^*\circ (\rm{id}-P^2)\circ \pi(\varphi ) \leq \pi(\varphi )^*\circ (c_2F+\varepsilon +R_2)\circ \pi(\varphi ){,}$$
i.e.
$$- (c_1F+\varepsilon +R_1)\circ \pi(\psi )\leq  (\rm{id}-P^2)\circ \pi(\psi ) \leq (c_2F+\varepsilon +R_2)\circ \pi(\psi ),$$
since all the operators are $G$-invariant. Therefore, passing to the Calking algebra and letting $\epsilon$ tend to $0$, we deduce that $(\rm{id}-P^2)\circ \pi(\psi )$ is compact for any non-negative $\psi\in C^*G$. Now, using continuous functional calculus, we may write any $\varphi \in C^*G$ as a linear combination of non-negative elements and conclude. 

\end{proof}
\noindent
In the following definition and as before, $P=\begin{pmatrix}
0 & P_0^*\\ P_0 & 0
\end{pmatrix} $.

\begin{defi}
The index class $\indm (P_0)$ of a $G$-invariant family $P_0$ of $G$-transversally elliptic operators is defined as:
$$
\indm (P_0) \; :=\; [\mathcal{E},\pi,P]\;\; \in \k\K(C^*G,C(B)).
$$
We also denote by  $\ind(P_0)$ its image in $\k\k(C^*G,C(B))$. 
\end{defi}

\begin{remarque}\
If the family $P_0$ is invariant with respect to an extra action of a compact group $H$, whose action commutes with the action of $G$, then the previous construction yields, for any $G\times H$-equivariant family  a $G$-transversally elliptic operators to an $H$-equivariant index class
$$
\mathrm{Ind}^{\mathrm{M|B}}_{\mathrm{G\times H}}(P_0) \in \k\k_{\mathrm{G\times H}}(C^*G,C(B)).
$$ 
We then  denote accordingly by $\mathrm{Ind}^{\mathrm{M|B}}_\mathrm{H}(P_0)$ its image in $\k\k_{\mathrm{ H}}(C^*G,C(B))$.
\end{remarque}

\subsection{Unbounded version of the index class}
Since the geometric operators play an important part in the index theory of $G$-transversally elliptic operators, we now introduce the definition of the index class for operators of  order $1$, using the unbounded version of Kasparov's theory, see for instance  \cite{baaj1983theorie}. The unbounded version simplifies the computation of some Kasparov products in the next sections.

\begin{defi}[Unbounded Kasparov module $\cite{baaj1983theorie}$]
Let $A$ and $B$ be $C^*$-algebras. An $(A,B)$-unbounded Kasparov cycle $(E,\phi,D)$ is a triple where $E$ is a Hilbert $B$-module, $\phi : A \rightarrow \mathcal{L}(E)$ is a graded $\star$-homomorphism and $(D, \rm{dom} (D))$ is an unbounded regular seladjoint operator such that:
\begin{enumerate}
\item $(1+D^2)^{-1}\phi(a)\in \mathcal{K}(E)$, $\forall a \in A$,
\item  The subspace of $A$ composed of the elements $a\in A$ such that $\phi (a) (\rm{dom}(D))\subset \rm{dom}(D)$ and $[D,\phi(a)]=D\phi (a) - \phi(a) D$ is densely defined and extends to an adjointable operator on $E$,  is dense in $A$.
\end{enumerate}
When $E$ is $\Z_2$-graded with $D$ odd and $\pi (a)$ even for any $a$, we say that the Kasparov cycle is even. Otherwise, it is odd. 
\end{defi}

In \cite{baaj1983theorie}, appropriate equivalence relations are introduced on such (even/odd) unbounded Kasparov cycles, which allowed to recover the groups $\k\k^* (A, B)$. When the compact group $G$ acts on all the above  data, one recovers similarly $\k\k_\mathrm{G}^*(A, B)$ by using the equivariant version of the  Baaj-Julg unbounded cycles of the previous definition.  
We  now  recall some useful properties of unbounded operators on Hilbert modules over our commutative $C^*$-algebra, our main reference is \cite{hilsum1989fonctorialite}. Let $X$ be a locally compact space and let $\mathcal{F}$ be a Hilbert $C_0(X)$-module. We denote by $\mathcal{F}_x$ the fiber of $\mathcal{F}$  at $x\in X$ and denote for any $\xi \in \mathcal{F}$ by $\xi_x\in \mathcal{F}$ its value at $x$. If $\mathcal{G} \subset \mathcal{F}$ is a subset of ${\mathcal F}$, we denote by $\mathcal{G}_x=\{\xi_x, \xi\in \mathcal{G}\}$. The following lemma is proved in \cite{hilsum1989fonctorialite}. 

\begin{lem}$\cite{hilsum1989fonctorialite}$
Let $\mathcal{G}$ be a $C_0(X)$-submodule of $\mathcal{F}$, then 
\begin{enumerate}
\item For any $x\in X$, $\overline{\mathcal{G}_x}=(\overline{\mathcal{G}})_x$. 
\item $\mathcal{G}$ is dense in $\mathcal{F}$ if and only if $\mathcal{F}_x$ is dense in $\mathcal{G}_x$, $\forall x\in X$.
\end{enumerate}
\end{lem}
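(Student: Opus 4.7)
The plan is to prove (1) first and derive (2) from it; I also note that in (2) the statement ``$\mathcal{F}_x$ is dense in $\mathcal{G}_x$'' should read ``$\mathcal{G}_x$ is dense in $\mathcal{F}_x$'', since only the inclusion $\mathcal{G}_x \subseteq \mathcal{F}_x$ is available. Recall that $\mathcal{F}_x = \mathcal{F}/(J_x \mathcal{F})$ with $J_x = \{f \in C_0(X) : f(x) = 0\}$, that the quotient map $\mathcal{F} \to \mathcal{F}_x$ is a contractive $C_0(X)$-module map, and that since the inner product on a Hilbert $C_0(X)$-module is $C_0(X)$-valued, the fibre norm $y \mapsto \|\zeta\|_y = \sqrt{\langle \zeta,\zeta\rangle(y)}$ is continuous.

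For part (1), the inclusion $(\overline{\mathcal{G}})_x \subseteq \overline{\mathcal{G}_x}$ follows immediately from continuity of the quotient map. For the reverse $\overline{\mathcal{G}_x} \subseteq (\overline{\mathcal{G}})_x$, given $\eta \in \overline{\mathcal{G}_x}$ I would pick $\xi_n \in \mathcal{G}$ with $(\xi_n)_x \to \eta$, extract a subsequence satisfying $\|(\xi_{n+1} - \xi_n)_x\| < 2^{-n}$, and use continuity of the fibre norm to find open neighbourhoods $U_n \ni x$ on which $\|(\xi_{n+1} - \xi_n)_y\| < 2^{-n}$. Choosing bump functions $\chi_n \in C_0(X)$ with $\chi_n(x) = 1$, $0 \leq \chi_n \leq 1$, $\mathrm{supp}(\chi_n) \subset U_n$, the telescoping sequence
\[
s_N = \xi_1 + \sum_{n=1}^{N-1} \chi_n(\xi_{n+1} - \xi_n)
\]
lies in $\mathcal{G}$ (since $\mathcal{G}$ is a $C_0(X)$-submodule), is Cauchy because $\|s_{N+1} - s_N\| \leq 2^{-N}$, and satisfies $(s_N)_x = (\xi_N)_x \to \eta$ by a telescoping identity at the fibre. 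Its limit $\xi \in \overline{\mathcal{G}}$ then satisfies $\xi_x = \eta$ by continuity of the evaluation map.

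For part (2), the direction $(\Rightarrow)$ is immediate from (1). For the converse, (1) reduces matters to showing that a closed $C_0(X)$-submodule $\mathcal{H} \subseteq \mathcal{F}$ with $\mathcal{H}_x = \mathcal{F}_x$ for every $x$ equals $\mathcal{F}$. Given $\xi \in \mathcal{F}$ and $\varepsilon > 0$, for each $x$ I pick $\eta^{(x)} \in \mathcal{H}$ with $(\eta^{(x)})_x = \xi_x$, so by continuity of the fibre norm $\|\xi - \eta^{(x)}\|_y < \varepsilon$ on some neighbourhood $V_x$. A partition of unity subordinate to $\{V_x\}$ (using paracompactness of the locally compact space $X$) then assembles these local approximants into a single element of $\mathcal{H}$ differing from $\xi$ in fibre norm by at most $\varepsilon$ uniformly in $y$, yielding $\xi \in \mathcal{H}$.

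The main obstacle is the reverse inclusion in part (1): fibrewise convergence $(\xi_n)_x \to \eta$ does not automatically lift to convergence of $\xi_n$ in $\mathcal{F}$, so one must combine continuity of the fibre norm with a shrinking-support bump construction to upgrade a fibre-Cauchy sequence in $\mathcal{G}$ to a norm-Cauchy one while preserving the fibre value at $x$.
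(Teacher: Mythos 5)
The paper cites this lemma directly from Hilsum's article \cite{hilsum1989fonctorialite} and supplies no proof of its own, so there is nothing to compare against; what follows is an assessment of your argument on its merits. You are right that part (2) contains a typo and should read ``$\mathcal{G}_x$ is dense in $\mathcal{F}_x$''. Your proof of part (1) is correct: the easy inclusion follows from contractivity of evaluation, and the reverse inclusion is handled by the standard trick of turning a fibrewise-Cauchy sequence into a norm-Cauchy one via shrinking bump functions $\chi_n$ supported in the sets $U_n$ where the fibre norm of $\xi_{n+1}-\xi_n$ stays below $2^{-n}$; the telescoping identity $(s_N)_x = (\xi_N)_x$ is exactly what makes this work. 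This is the right and essentially the only way to prove the inclusion.

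In part (2), there is a small but genuine gap in the justification of the patching step: you invoke ``paracompactness of the locally compact space $X$'', but locally compact Hausdorff spaces are not paracompact in general (e.g.\ the long line), and even when they are, the partition-of-unity functions produced by paracompactness need not lie in $C_0(X)$, which is needed to stay inside the $C_0(X)$-module $\mathcal{H}$. The correct argument does not need paracompactness at all: since $\langle\xi,\xi\rangle\in C_0(X)$, the set $K=\{y:\|\xi\|_y\geq\varepsilon/2\}$ is compact. Cover $K$ by finitely many $V_{x_1},\dots,V_{x_n}$, take $\phi_1,\dots,\phi_n\in C_c(X)$ with $0\leq\phi_i\leq1$, $\operatorname{supp}\phi_i\subset V_{x_i}$, $\sum_i\phi_i\leq1$ everywhere and $\sum_i\phi_i=1$ on $K$, and set $\eta=\sum_i\phi_i\,\eta^{(x_i)}\in\mathcal{H}$. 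Then for $y\in K$ one has $\|\xi-\eta\|_y<\varepsilon$ from the convex combination of errors on the $V_{x_i}$, and for $y\notin K$ one estimates $\|\xi-\eta\|_y\leq(1-\sum_i\phi_i(y))\|\xi\|_y+\sum_i\phi_i(y)\|\xi-\eta^{(x_i)}\|_y<\varepsilon/2+\varepsilon/2$. With this replacement the proof of part (2) is complete; the rest of your reduction via part (1) to a closed submodule $\mathcal{H}$ with full fibres is correct.
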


\noindent
Let, for any  $x\in X$, $T_x$ be a given unbounded operator on $\mathcal{F}_x$ with domain ${\rm{dom}}(T_x)\subset \mathcal{F}_x$. We then define an unbounded operator $T$ on $\mathcal{F}$ by setting
$$
\mathrm{dom} (T)=\{\xi\in \mathcal{F} /\xi_x\in \mathrm{dom} (T_x), \forall x\in X ~\mathrm{and }~ T\xi := (T_x\xi_x)_{x\in X} \in \mathcal{F}\}.
$$
We shall also use the following proposition. 

\begin{prop}\label{regular}$\cite{hilsum1989fonctorialite}$
Assume that for any $x\in X$, the operator $T_x$ is selfadjoint, and that $T$ is densely defined. Then $T$ is selfadjoint and the following  assertions are equivalent: 
\begin{enumerate}
\item $T$ is regular;
\item for any $x\in X$, $(\mathrm{dom}(T))_x$ is an essential domain for $T_x$;
\item for all $x\in X$, we have $(\mathrm{dom}(T))_x=\mathrm{dom}(T_x)$.
\end{enumerate}
\end{prop}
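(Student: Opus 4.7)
The plan is to first establish that $T$ is automatically selfadjoint via a fiberwise argument relying on the density lemma recalled above, and then to prove the three equivalences as a cycle $(1) \Rightarrow (3) \Rightarrow (2) \Rightarrow (1)$.

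First I observe that $T$ is closed: if $\xi^n \rightarrow \xi$ and $T\xi^n \rightarrow \eta$ in $\mathcal{F}$, then evaluation at each $x \in X$ gives $\xi^n_x \rightarrow \xi_x$ and $T_x \xi^n_x \rightarrow \eta_x$, so closedness of each selfadjoint $T_x$ forces $\xi_x \in \mathrm{dom}(T_x)$ and $T_x\xi_x = \eta_x$, whence $\xi \in \mathrm{dom}(T)$ and $T\xi = \eta$. Symmetry of $T$ follows fiberwise. To promote symmetry to selfadjointness, I take $\xi \in \mathrm{dom}(T^*)$ with $\eta = T^*\xi$; evaluating the defining identity $\langle T\zeta, \xi\rangle = \langle \zeta, \eta\rangle$ at any $x$ yields $\langle T_x\zeta_x, \xi_x\rangle_{\mathcal{F}_x} = \langle \zeta_x, \eta_x\rangle_{\mathcal{F}_x}$ for every $\zeta \in \mathrm{dom}(T)$. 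Since the preceding lemma (applied with $\mathcal{G} = \mathrm{dom}(T)$) says $(\mathrm{dom}(T))_x$ is dense in $\mathcal{F}_x$, the estimate $|\langle T_x \zeta_x, \xi_x\rangle| \leq \|\zeta_x\|\,\|\eta_x\|$ places $\xi_x$ in $\mathrm{dom}(T_x^*) = \mathrm{dom}(T_x)$ with $T_x\xi_x = \eta_x$; hence $\xi \in \mathrm{dom}(T)$ and $T = T^*$.

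For the equivalences, $(1) \Rightarrow (3)$ uses that a regular selfadjoint operator has $(T + i)\mathrm{dom}(T) = \mathcal{F}$; taking fibers yields $(T_x + i)(\mathrm{dom}(T))_x = \mathcal{F}_x$, and bijectivity of $T_x + i : \mathrm{dom}(T_x) \rightarrow \mathcal{F}_x$ forces $(\mathrm{dom}(T))_x = \mathrm{dom}(T_x)$. The step $(3) \Rightarrow (2)$ is trivial since the whole domain is a core for itself. For $(2) \Rightarrow (1)$, essentiality of $(\mathrm{dom}(T))_x$ for $T_x$, combined with the isometry property of $T_x \pm i$ from the graph norm onto $\mathcal{F}_x$, gives density of $(T_x \pm i)(\mathrm{dom}(T))_x$ in $\mathcal{F}_x$; the lemma then promotes this to density of $(T \pm i)\mathrm{dom}(T)$ in $\mathcal{F}$. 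On this dense subspace the map $(T + i)\zeta \mapsto \zeta$ coincides fiberwise with $\eta \mapsto ((T_x + i)^{-1}\eta_x)_x$, and the uniform contraction bound $\|(T_x + i)^{-1}\| \leq 1$ lets it extend by continuity in the $\mathcal{F}$-norm to an adjointable contraction $R$ on $\mathcal{F}$. By construction $R$ is a two-sided inverse of $T + i$, and the same argument for $T - i$ yields regularity of $T$.

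The main technical point is the step $(2) \Rightarrow (1)$. Unlike in the Hilbert space setting, on a Hilbert module the dense range of $T \pm i$ does not by itself imply surjectivity, and one must actually assemble the fiberwise bounded inverses $(T_x \pm i)^{-1}$ into a $C_0(X)$-linear operator. The essential-domain hypothesis is precisely what guarantees, through the preceding lemma, that the graph of this candidate resolvent sits densely in a subspace consisting of genuine elements of $\mathcal{F}$, allowing the continuous extension that produces the required adjointable inverse.
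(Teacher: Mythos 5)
The paper does not prove this proposition: it is a direct citation to Hilsum's 1989 article, so there is no ``paper's own proof'' to compare against, and I can only evaluate your argument on its merits.

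Your treatment of closedness and symmetry is correct, and so is the cycle $(1)\Rightarrow(3)\Rightarrow(2)$: from regularity you get $(T+i)\,\mathrm{dom}(T)=\mathcal{F}$, fiberwise this reads $(T_x+i)\bigl((\mathrm{dom}\,T)_x\bigr)=\mathcal{F}_x$, and injectivity of $T_x+i$ on $\mathrm{dom}(T_x)$ then forces $(\mathrm{dom}\,T)_x=\mathrm{dom}(T_x)$. The idea for $(2)\Rightarrow(1)$ (assemble the fiberwise resolvents $(T_x\pm i)^{-1}$ into a bounded inverse, using the density lemma and the uniform bound $\|(T_x\pm i)^{-1}\|\le 1$) is also the right one.

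There is, however, a genuine gap in the selfadjointness step. You take $\xi\in\mathrm{dom}(T^*)$, derive $\langle T_x\zeta_x,\xi_x\rangle=\langle\zeta_x,\eta_x\rangle$ for all $\zeta\in\mathrm{dom}(T)$, invoke the lemma to say $(\mathrm{dom}\,T)_x$ is dense in $\mathcal{F}_x$, and conclude $\xi_x\in\mathrm{dom}(T_x^*)$. That deduction is not valid: density of $(\mathrm{dom}\,T)_x$ in $\mathcal{F}_x$ (i.e.\ in the Hilbert-space norm) is not the same as $(\mathrm{dom}\,T)_x$ being a \emph{core} for $T_x$ (graph-norm density in $\mathrm{dom}(T_x)$), and the definition of $\mathrm{dom}(T_x^*)$ requires the pairing identity against \emph{every} $w\in\mathrm{dom}(T_x)$. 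What your computation actually shows is that $\xi_x$ lies in the domain of the adjoint of the restriction $T_x\vert_{(\mathrm{dom}\,T)_x}$. This adjoint contains $T_x=T_x^*$ and is strictly larger exactly when condition $(2)$ fails at $x$ --- i.e.\ exactly in the non-regular case the statement is supposed to cover as well. So your argument proves selfadjointness only under the hypothesis $(2)$ that you are separately showing to be equivalent to regularity; the unconditional assertion ``$T$ is selfadjoint'' needs a different idea that exploits the constraint of $\xi,\eta$ being \emph{continuous} sections, not just a fiber-by-fiber Riesz argument. A secondary, smaller gap: in $(2)\Rightarrow(1)$ you assert that the bounded extension $R$ of $(T+i)^{-1}$ is adjointable; on a Hilbert $C_0(X)$-module boundedness and $C_0(X)$-linearity do not give adjointability. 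You should build the candidate adjoint $R'$ by the same construction applied to $T-i$ and check $\langle R\xi,\eta\rangle=\langle\xi,R'\eta\rangle$ on the dense ranges $\mathrm{Ran}(T\pm i)$. Once that is done, surjectivity of $T\pm i$ gives a direct and clean proof of selfadjointness \emph{in the regular case} (any $\xi\in\mathrm{dom}(T^*)$ differs from $\zeta:=(T+i)^{-1}(T^*\xi+i\xi)\in\mathrm{dom}(T)$ by a vector orthogonal to $\mathrm{Ran}(T-i)=\mathcal{F}$), but this still leaves the non-regular case open.
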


Let us get back now to our situation with the Hilbert $C(B)$-module ${\mathcal E}$ associated with the bundle $E\to M$ and with the previous data.

\begin{defi}
Let $P_0 : C^{\infty,0}(M,E^+)\rightarrow C^{\infty,0}(M,E^-)$ be a family of $G$-invariant pseudodifferential operators of order $1$. We say that $P_0$ is a family of $G$-transversally elliptic operators if its principal symbol $\sigma (P_0) : \pi^*E^+ \rightarrow \pi^*E^-$ is invertible over $T^V_GM\setminus M$.
\end{defi}

Let $P_0:C^{\infty,0}(M,E^+)\rightarrow C^{\infty,0}(M,E^-)$ be a $G$-invariant family of $G$-transversally elliptic pseudodifferential operators of order $1$. Denote by $\hat{P}$ the operator $\begin{pmatrix}
0&P_0^*\\P_0&0
\end{pmatrix}$, where $P_0^*$ is the family $\{(P_0^*)_b\}_b$ corresponding to  the formal adjoints along the fibers.

\begin{prop}\label{Unbounded} 
Let $P_0:C^{\infty,0}(M,E^+)\rightarrow C^{\infty,0}(M,E^-)$ be a $G$-invariant family of $G$-transversally elliptic pseudodifferential operators of order $1$, and let $\hat{P}=\begin{pmatrix}
0&P_0^*\\P_0&0
\end{pmatrix}$ as above. Then with the domain $\rm{dom} (\hat{P}):= \{ \xi \in \mathcal{E},\ \xi_t \in \mathrm{dom}(\hat{P}_t)~\mathrm{and}~\hat{P}\xi \in \mathcal{E}\}$, the unbounded operator $\hat{P} : \mathcal{E}\rightarrow \mathcal{E}$, 
 is essentially seladjoint. So if we denote by $P$ its closure then $P$ is regular and selfadjoint. 
\end{prop}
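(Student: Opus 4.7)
The plan is to apply the Hilsum--Skandalis criterion of Proposition~\ref{regular} directly to $\hat{P}$, with $X=B$ and $\mathcal{F}=\mathcal{E}$. Under the hypotheses of that proposition (fiberwise selfadjointness plus density of the domain), $\hat{P}$ with its maximal fiberwise domain is automatically selfadjoint, and the equivalence of the three assertions reduces regularity to a verifiable core condition. Consequently, showing that $P$ coincides with its own closure and is regular and selfadjoint amounts to checking three points: fiberwise selfadjointness of each $\hat{P}_b$, density of $\mathrm{dom}(\hat{P})$ in $\mathcal{E}$, and that $(\mathrm{dom}(\hat{P}))_b$ is a core for $\hat{P}_b$ for every $b \in B$.

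For the first point, note that for each $b \in B$ the fiber $\hat{P}_b=\begin{pmatrix}0&(P_0)_b^*\\(P_0)_b&0\end{pmatrix}$ is a formally symmetric pseudodifferential operator of order $1$ on the closed manifold $M_b$; by Chernoff's theorem (or its first-order pseudodifferential variant via finite propagation speed of the wave equation $\partial_t u = i\hat{P}_b u$), it is essentially selfadjoint on $C^\infty(M_b,E_b)$, and I denote its unique selfadjoint closure again by $\hat{P}_b$. For the second point, every $\xi \in C^{\infty,0}(M,E)$ satisfies $\xi_b \in C^\infty(M_b,E_b)\subset \mathrm{dom}(\hat{P}_b)$ and $\hat{P}\xi \in C^{\infty,0}(M,E) \subset \mathcal{E}$, so $C^{\infty,0}(M,E)\subset \mathrm{dom}(\hat{P})$, which is therefore dense in $\mathcal{E}$. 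For the third point, the local triviality of the $G$-fibration $p:M\to B$ lets me extend any $\sigma \in C^\infty(M_b,E_b)$ to a continuous fiberwise-smooth section $\tilde\sigma \in C^{\infty,0}(M,E)$ with $\tilde\sigma_b=\sigma$, by trivializing over a neighborhood of $b$, taking the constant extension in the base variable, and cutting off with a bump function; hence $C^\infty(M_b,E_b) \subset (C^{\infty,0}(M,E))_b \subset (\mathrm{dom}(\hat{P}))_b$, and since $C^\infty(M_b,E_b)$ is already a core for $\hat{P}_b$, so is $(\mathrm{dom}(\hat{P}))_b$.

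Proposition~\ref{regular} then yields that $\hat{P}$ is selfadjoint and, via the implication (2)$\Rightarrow$(1), regular; its closure $P$ coincides with $\hat{P}$, so the statement follows. The main obstacle is the fiberwise essential selfadjointness of $\hat{P}_b$: since $P_0$ is only $G$-transversally elliptic, the usual elliptic regularity is unavailable on $M_b$ and one cannot invoke the standard Sobolev/resolvent argument, so the Chernoff-type finite-propagation method is essential. All remaining ingredients (symmetry of $\hat{P}$ on $C^{\infty,0}(M,E)$, the explicit form of the formal adjoint, and the partition-of-unity extension) are routine consequences of the $G$-invariance of the fiberwise measures $(\mu_b)$ and the local triviality of $p$.
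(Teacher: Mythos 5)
Your proof follows the same overall scheme as the paper's: reduce to Proposition~\ref{regular} by verifying (a) fiberwise selfadjointness, (b) density of the domain via $C^{\infty,0}(M,E)\subset\mathrm{dom}(\hat P)$, and (c) the core condition $C^{\infty}(M_b,E_b)\subset(\mathrm{dom}(\hat P))_b$. Points (b) and (c) are handled correctly, and your extension-by-cutoff argument fills in a detail the paper states without elaboration.

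There is, however, a genuine gap in your justification of (a). You correctly observe that $\hat P_b$ is only formally symmetric and \emph{not} elliptic on $M_b$ (because $P_0$ is merely $G$-transversally elliptic), so the standard resolvent/Sobolev argument is unavailable. But your proposed fix --- invoking ``Chernoff's theorem (or its first-order pseudodifferential variant via finite propagation speed)'' --- does not work here: finite propagation speed for the half-wave equation $\partial_t u = i\hat P_b u$ is a feature of \emph{local} (differential) operators, and fails in general for pseudodifferential operators, which are nonlocal. There is no such ``pseudodifferential variant via finite propagation speed.'' The fact you need is nevertheless true: a formally symmetric (classical) pseudodifferential operator of order~$1$ on a \emph{closed} manifold is essentially selfadjoint on smooth sections, with or without ellipticity. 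The standard proof proceeds not via propagation speed but via Friedrichs mollifiers/commutator estimates: one fixes an auxiliary positive elliptic operator $\Lambda$ of order~$1$, sets $J_\varepsilon=(1+\varepsilon\Lambda)^{-1}$, observes that $[\hat P_b, J_\varepsilon]=-\varepsilon J_\varepsilon[\hat P_b,\Lambda]J_\varepsilon$ is uniformly bounded and tends to $0$ strongly, and then runs the usual deficiency-index computation on $u\in\ker(\hat P_b^*\mp i)$ with $u_\varepsilon=J_\varepsilon u$. The paper side-steps this by citing Fox--Haskell (\cite{fox1994index}); your writeup should either cite such a reference or replace the Chernoff claim by the mollifier argument.

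A smaller remark, affecting both your proposal and the paper's proof equally: once the fiber operators are replaced by their closures $\overline{\hat P_b}$ and Proposition~\ref{regular} is applied, one obtains a selfadjoint regular operator $P$ with domain $\{\xi : \xi_b\in\mathrm{dom}(\overline{\hat P_b}),\ P\xi\in\mathcal E\}$. That $P$ is \emph{exactly} the closure of the original $\hat P$ (equivalently, that $\hat P$ is essentially selfadjoint) uses the inclusions $\overline{\hat P}\subset P\subset\hat P^*$ together with $P=P^*$, and is worth a sentence of justification rather than a tacit identification.
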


\begin{proof}
Clearly $\hat{P}$ is then densely defined in $\mathcal{E}$ because $\mathrm{dom}(\hat{P})$ contains $C^{\infty ,0}(M,E)$. For  any $b\in B$, the operator $\hat{P}_b$ with domain $C^{\infty}(M_b,E_b)$ is essentially selfadjoint, see \cite{fox1994index}. As $\mathrm{dom}(P) = \{\xi \in \mathcal{E},\ \xi_b \in \mathrm{dom}(\overline{P_b})\ \mathrm{et}\  \overline{P}\xi \in \mathcal{E}\}$, we get by Proposition \ref{regular} that $P$ is selfadjoint and it remains to check that $P$ is regular which again by Proposition \ref{regular} reduces to proving that for any $b\in B$,  $(\mathrm{dom}(P))_b$ is an essential domain for $P_b$. We have $C^{\infty}(M_b,E_b) \subset (\mathrm{dom}(P))_b$, so $P$ is regular.
\end{proof}

We are now in position to state our result. Recall that $G$ acts on $C^*G$  by conjugation, that $\mathcal{E}$ is the $G$-equivariant Hilbert $C(B)$-module associated with the continuous field of Hilbert spaces $\{L^2 (M_b, E_b)_{b\in B}\}$, and that $\pi : C^*G \rightarrow \mathcal{L}_{C(B)}(\mathcal{E})$ is the representation defined in Proposition \ref{prop:pi*rep}. 

\begin{thm}
Let $P_0:C^{\infty,0}(M,E^+)\rightarrow C^{\infty,0}(M,E^-)$ be a $G$-invariant family of $G$-transversally elliptic pseudodifferential operators of order $1$, and let $P$ be the associated regular self-adjoint operator defined above out of $\begin{pmatrix}
0&P_0^*\\P_0&0
\end{pmatrix}$. Then the triple $(\mathcal{E}, \pi ,P)$ is an even $(C^*G,C(B))$-unbounded Kasparov cycle, which defines a class in $\k\K(C^*G,C(B))$.
\end{thm}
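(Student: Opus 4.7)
My plan is to verify in turn the two defining conditions of an unbounded $(C^*G, C(B))$-Kasparov cycle, exploiting as much as possible the structure already established in the bounded (order-$0$) case. The even grading ($P$ odd, $\pi(\varphi)$ even), and the $G$-equivariant $\star$-representation property of $\pi$ are supplied by Proposition \ref{prop:pi*rep}, while Proposition \ref{Unbounded} gives that $P$ is regular and self-adjoint. What remains is to check (i) that $\pi$ sends a dense subalgebra of $C^*G$ into operators preserving $\mathrm{dom}(P)$ with bounded commutator, and (ii) that $\pi(\varphi)(1+P^2)^{-1} \in \mathcal{K}_{C(B)}(\mathcal{E})$ for every $\varphi \in C^*G$.

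\textbf{Commutator condition.} For this I would take $C^\infty(G) \subset C^*G$ as the dense subalgebra. For $\varphi \in C^\infty(G)$ and a smooth section $s \in C^{\infty,0}(M,E)$, the $G$-invariance of $P_0$ allows me to bring $P$ inside the averaging integral defining $\pi(\varphi)$, so that $P\pi(\varphi)s = \pi(\varphi)Ps$; this is just the unbounded counterpart of Remark \ref{lem:Ppi=piP}. Since $C^{\infty,0}(M,E)$ is a core for $P$ (by Proposition \ref{Unbounded} and its proof), a standard closure argument then yields $\pi(\varphi)\mathrm{dom}(P) \subset \mathrm{dom}(P)$ and $[P, \pi(\varphi)] = 0$, which is trivially adjointable.

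\textbf{Compact resolvent.} This is the main step. Setting $F = P(1+P^2)^{-1/2} \in \mathcal{L}_{C(B)}(\mathcal{E})$ via the Borel functional calculus for the regular self-adjoint $P$, I rewrite $(1+P^2)^{-1} = 1 - F^2$ and show that $(1-F^2)\pi(\varphi)$ is compact by the same route followed in the preceding order-$0$ theorem. The key observation is that, modulo $\mathcal{K}_{C(B)}(\mathcal{E})$, $F$ can be represented by a classical $0$-order $G$-transversally elliptic family with principal symbol $\sigma(P_0)(1+|\sigma(P_0)|^2)^{-1/2}$; consequently the principal symbol of $1-F^2$ satisfies the bound of Definition \ref{ope:trans:elliptic:technique}, namely
\[
\|\sigma_{1-F^2}(m,\xi)\| \leq c\,(1+q_m(\xi))(1+\|\xi\|^2)^{-1} + \varepsilon
\]
on $T^VM$. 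Proposition \ref{prop:inégalité:preuve:thm} then yields the usual sandwich $\pm(c_i F_0 + \varepsilon + R_i)$, where $F_0$ has symbol $(1+q_m(\xi))(1+\|\xi\|^2)^{-1}$ and $R_i$ are smoothing. Because $F_0 \equiv d_G^*(1+\Delta)^{-1}d_G$ modulo negative order, combining Proposition \ref{prop:dG} with Corollary \ref{cor:compact} shows that $F_0 \pi(\varphi)$ is compact. Conjugating the sandwich by $\pi(\varphi)^*$, using $G$-invariance to collapse it into $\pi(\varphi^*\star\varphi) = \pi(\psi)$ with $\psi \geq 0$, projecting into the Calkin algebra and sending $\varepsilon \to 0$ yields compactness for $\pi(\psi)(1-F^2)$, and the decomposition of a general element of $C^*G$ into non-negative parts by continuous functional calculus concludes.

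\textbf{Expected obstacle.} The one delicate point is that $F = P(1+P^2)^{-1/2}$ is not itself a classical pseudodifferential family, so the symbolic estimate above requires justification: either $F$ must be approximated in $\mathcal{L}(\mathcal{E})$ modulo compacts by a classical order-$0$ family carrying the indicated symbol, or Proposition \ref{prop:inégalité:preuve:thm} must be adapted directly to $1-F^2$ via the functional calculus of the non-negative operator $1+P^2$, whose fibrewise transverse ellipticity still drives the argument. Both routes are standard in pseudodifferential $\k$-theory once the continuity in $b \in B$ is handled uniformly, as is already done in the appendix.
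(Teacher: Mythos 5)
Your handling of the grading, the $\star$-representation, and the commutator condition matches the paper's. The compact resolvent step, however, takes a genuinely different route, and it contains a real gap precisely at the point you yourself flag as the ``expected obstacle.''

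The paper never passes through the bounded transform. It takes $\Delta_G$ to be the Casimir family of second order along the fibres, observes that $1+P^2+\Delta_G$ is fibrewise \emph{elliptic} (the symbol of $P$ is invertible transversally to the orbits, that of $\Delta_G$ along them), so its resolvent $(1+P^2+\Delta_G)^{-1}$ is compact on $\mathcal{E}$, and then uses the algebraic identity
\[
(1+P^2+\Delta_G)^{-1}\pi(\varphi)-(1+P^2)^{-1}\pi(\varphi)
  = -(1+P^2+\Delta_G)^{-1}\pi(\Delta\varphi)(1+P^2)^{-1},
\]
obtained from $\Delta_G\pi(\varphi)=\pi(\Delta\varphi)$ together with $[\pi(\varphi),P]=0$. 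On the right every factor is adjointable and one of them compact, so the difference is compact and the conclusion follows.

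Your route instead sets $F=P(1+P^2)^{-1/2}$ and tries to feed $1-F^2$ into the order-$0$ machinery, asserting that $F$ is, modulo $\mathcal{K}_{C(B)}(\mathcal{E})$, a classical order-$0$ transversally elliptic family with principal symbol $\sigma(P_0)(1+|\sigma(P_0)|^2)^{-1/2}$. This is the step that fails. For a fibrewise elliptic family, $(1+P^2)^{-1/2}$ would indeed be pseudodifferential of order $-1$ modulo smoothing by the parametrix construction, and the claim would follow. But $P$ is only $G$-transversally elliptic, so $1+P^2$ is not elliptic along the fibres: there is no parametrix, and $(1+P^2)^{-1/2}$ is in general \emph{not} a classical pseudodifferential family. (Concretely, take $G=S^1$ acting by rotation on the first factor of $M=S^1\times S^1$ and $P=D_{\theta_2}$: then $(1+P^2)^{-1/2}$ has ``symbol'' $(1+\xi_2^2)^{-1/2}$, which violates the symbol estimates in the $\xi_1$ direction and differs from every classical order-$(-1)$ family by a noncompact operator; similarly $F$ has ``symbol'' $\xi_2(1+\xi_2^2)^{-1/2}$, which has no homogeneous-of-degree-zero limit at infinity and hence is not compactly close to any classical order-$0$ family.) Neither of the two repairs you sketch closes this: Proposition \ref{prop:inégalité:preuve:thm} is a symbolic inequality that applies to families of pseudodifferential operators, which $1-F^2$ is not, and the vague ``functional calculus'' alternative does not supply the symbolic estimates the argument needs. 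This is exactly the obstruction the paper's Casimir device is designed to sidestep: rather than forcing $(1+P^2)^{-1}$ into the pseudodifferential calculus, it completes the denominator to an elliptic one and estimates the remainder through the $G$-derivative $\pi(\Delta\varphi)$.
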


\begin{proof}
By Proposition \ref{prop:pi*rep}, the representation $\pi$ is $G$-equivariant. For any $\varphi\in C^\infty (G)$, it is easy to see that $\pi(\varphi)$ preserves the domain of $P$ and by Remark \ref{lem:Ppi=piP}, we have $[\pi(\varphi),P]=0$. 
It remains to check that $(1+P^2)^{-1}\circ\pi(\varphi) \in \mathcal{K}(\mathcal{E})$. We may take for our operator $\Delta_G$ for instance the Casimir operator, which is a differential operator along the fibers of order $2$. The operator $1+P^2+\Delta_G$ is elliptic along the fibers since  $\sigma (P_b)$ is invertible in the transversal direction to the orbits and $\sigma (\Delta_G)$ is invertible in the orbits direction, see \cite{atiyah1974elliptic}. We hence deduce that the resolvent $(1+P^2+\Delta_G)^{-1}$ is a  compact operator in $\mathcal{E}$. 

We now show that for any $\varphi\in C^\infty (G)$, the operator 
$$
(1+P^2+\Delta_G)^{-1}\circ \pi(\varphi)-(1+P^2)^{-1}\circ \pi (\varphi ), 
$$ 
is compact, which will insure that $(1+P^2)\circ \pi(\varphi )$ is also compact. 
Denote again by $\Delta$ the Laplacian on $G$ viewed as a riemannian $G$-manifold. Using that $\Delta_G\pi(\varphi)=\pi(\Delta\phi)$ and that $[\pi(\varphi) ,P]=0$, we have: 
\begin{multline*}
(1+P^2+\Delta_G)^{-1}\circ \pi(\varphi)-(1+P^2)^{-1}\circ \pi (\varphi )=-(1+P^2+\Delta_G)^{-1}\Delta_G(1+P^2)^{-1} \circ\pi(\varphi )\\
~~~~~~=-(1+P^2+\Delta_G)^{-1}\pi(\Delta\varphi)(1+P^2)^{-1}.
\end{multline*}
Now since $\pi(\Delta\varphi)$ and $(1+P^2)^{-1}$ are adjointable operators and since $(1+P^2+\Delta_G)^{-1}$ is compact, we deduce that $(1+P^2+\Delta_G)^{-1}\circ \pi(\varphi)-(1+P^2)^{-1}\circ \pi (\varphi )$ is a compact operator on ${\mathcal E}$.
\end{proof}

\begin{defi}\label{def:indice:NB}
The index class $\indm(P_0)$ of a $G$-invariant family $P_0$ of $G$-transversally elliptic pseudodifferential operators of positive order,  is the class in $\k\K(C^*G,C(B))$ of the $(C^*G,C(B))$-unbounded Kasparov cycle $(\mathcal{E},\pi, P)$, i.e.
$$
\indm(P_0)=[\mathcal{E},\pi, P] \quad \in \k\K(C^*G,C(B)).
$$
\end{defi}

\begin{remarque}
The relation with the bounded version is made using the Woronowicz transformation, see \cite{baaj1983theorie}. More precisely, if $P_0$ is a $G$-invariant family of $G$-transversally elliptic pseudodifferential operators of  order $1$, then the following triple $(\mathcal{E},\pi, P(1+P^2)^{-1/2})$ is a (bounded) Kasparov cycle and the index class coincides with  $[\mathcal{E},\pi, P(1+P^2)^{-1/2}]$.
\end{remarque}

\subsection{$\k$-theory Multiplicity}
\noindent
In the present paragraph, we shall only need the bounded version of the index class. Recall that we also have the index class $\ind(P)$ in $\k\k(C^*G,C(B))$ obtained by forgetting the $G$-action on $C^*G$.  For any irreducible unitary representation $V$ of $G$, we now define a class $m_P(V) \in \k (B) $ corresponding to the multiplicity of $V$ in the index class $\ind(P)$, and prove its compatibility with a second definition using the obvious Kasparov's intersection product.

\noindent
So let us fix an  irreducible unitary representation  $V$ of $G$.
Let $\mathcal{E}^G_V$ be the Hilbert $C(B)$-submodule of $V\otimes \mathcal{E}$ composed of the $G$-invariant elements, i.e.  $\mathcal{E}^G_V:=(V\otimes \mathcal{E})^G$.  In term of continuous fields of Hilbert spaces, the Hilbert module $\mathcal{E}^G_V$ corresponds to the field $\big((L^2(M_b,V\otimes E_b)^G)_{b\in B}$ with the space of continuous sections being the space  of  $G$-invariant continuous sections of the field associated with $V\otimes E$. If $C_{v, w}$ is the coefficient of the representation $V$ which corresponds to the vectors $v, w\in V$, then
the inner product satisfies the formula
$$
\langle v\otimes s,w\otimes t\rangle = \langle s,\pi(C_{v,w})t\rangle.
$$
Indeed, we have
\begin{eqnarray*}
\langle v\otimes s,w\otimes t\rangle (b)\hspace*{-0.3cm}&=&\hspace*{-0.4cm}\int_{M_b} \langle \int_G (g\cdot (v\otimes s))(m) \; dg ,\int_G (h\cdot (w\otimes t))(m)dh\rangle_ {(\underline{V}\otimes E)_m}\; d\mu_b(m)\\
&=&\int_{M_b} \int_G\int_G\langle  g\cdot v,h\cdot w\rangle_ {V} \langle (g\cdot s) (m), (h\cdot  t)(m)\rangle_ { E_m}\,dg\,dh\; d\mu_b(m)\\
&=&\int_{M_b}\langle s(m),\pi(C_{v,w})t(m)\rangle_ {E_m}\; d\mu_b(m)\\
&=&\langle s,\pi(C_{v,w})t\rangle (b).
\end{eqnarray*}
By restricting the operator $\text{id}_V \otimes P$ to the $G$-invariant elements, we get the adjointable  operator $P^G_V \in \mathcal{L}_{C(B)}(\mathcal{E}_V^G)$, i.e.
$$
P^G_V:=(\text{id}_V \otimes P)_{|\mathcal{E}^G_V}.
$$ 

\begin{lem}
$\big( \mathcal{E}^G_V,P^G_V\big)$ is an even Kasparov cycle which defines a class in $\k\k(\mathbb{C},C(B))$.
\end{lem}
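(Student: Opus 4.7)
The plan is to verify the three defining conditions of an even $(\mathbb{C}, C(B))$-Kasparov cycle: that $\mathcal{E}^G_V$ is a $\Z_2$-graded Hilbert $C(B)$-module, that $P^G_V$ is an odd, adjointable, self-adjoint operator on it, and that $\mathrm{id} - (P^G_V)^2$ is compact. Because the left algebra is $\mathbb{C}$, the commutator conditions with the representation are automatic.

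First I would equip $V$ with the trivial (even) $\Z_2$-grading so that $V \otimes \mathcal{E}$ inherits a product grading preserved by the diagonal $G$-action. The $G$-averaging operator
\[
p^G := \int_G \bigl(\rho_V(g) \otimes \pi(g)\bigr)\, dg
\]
is then an adjointable self-adjoint idempotent on $V \otimes \mathcal{E}$ whose range is exactly $\mathcal{E}^G_V$; this realises $\mathcal{E}^G_V$ as a $\Z_2$-graded complemented Hilbert $C(B)$-submodule of $V \otimes \mathcal{E}$. Since the family $P$ is $G$-invariant by Remark \ref{lem:Ppi=piP}, the bounded adjointable operator $\mathrm{id}_V \otimes P$ commutes with the diagonal $G$-action, hence with $p^G$, and therefore preserves $\mathcal{E}^G_V$; its restriction $P^G_V$ is automatically adjointable, self-adjoint, and odd.

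The main task is to show that $\mathrm{id}_{\mathcal{E}^G_V} - (P^G_V)^2 = (\mathrm{id}_V \otimes (\mathrm{id} - P^2))|_{\mathcal{E}^G_V}$ is compact. The plan is to fix an orthonormal basis $(e_i)$ of $V$ and expand
\[
p^G = \sum_{j,i} E_{ji} \otimes \pi(\rho_{ji}),
\]
where $E_{ji} = |e_j\rangle\langle e_i|$ are the matrix units on $V$ and $\rho_{ji}(g) = \langle e_j, g \cdot e_i\rangle_V$ are the continuous matrix coefficients of $V$, which lie in $L^1(G) \subset C^*G$. Using that $\mathrm{id}_V \otimes P^2$ commutes with $p^G$, I then expand $p^G \cdot (\mathrm{id}_V \otimes (\mathrm{id} - P^2)) \cdot p^G$ as a finite sum of operators of the form $E_{jk} \otimes \pi(\rho_{ji})(\mathrm{id} - P^2)\pi(\rho_{ik})$.

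The hard part is exactly this compactness, but once the expansion is in place it reduces to the Kasparov cycle property already proved for $(\mathcal{E}, \pi, P)$: each factor $(\mathrm{id} - P^2)\pi(\rho_{ik})$ lies in $k_{C(B)}(\mathcal{E})$, left multiplication by the bounded operator $\pi(\rho_{ji})$ preserves compactness, and tensoring with a rank-one operator on the finite-dimensional $V$ produces a compact operator on $V \otimes \mathcal{E}$. A finite sum of compacts is compact, and its restriction to the complemented submodule $\mathcal{E}^G_V$ yields the required element of $k_{C(B)}(\mathcal{E}^G_V)$. The single nontrivial idea is thus the trick of rewriting the $G$-averaging projection as a finite combination of $\pi(\varphi)$'s with $\varphi$ a matrix coefficient of $V$, which converts invariance into a statement already controlled by the ambient cycle.
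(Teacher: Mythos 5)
Your proof is correct and takes essentially the same route as the paper: both reduce compactness of $\mathrm{id}-(P^G_V)^2$ to the finitely many compact operators $(\mathrm{id}-P^2)\pi(\varphi)$ for $\varphi$ a matrix coefficient of $V$. The only difference is packaging — you introduce the averaging projection $p^G$ and invoke the complemented-submodule fact, whereas the paper applies $\int_G g\cdot(\cdot)\,dg$ directly to an invariant vector $s$ and works with the resulting identity $s=\sum_{i,j}e_j\otimes\pi(C_{e_j,e_i})s_i$; your version is arguably a cleaner way to organize the same computation.
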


\begin{proof}
We only need  to show that $(P^G_V)^2-1$ is compact. We have $(P_V^G)^2-1=(\text{id}_V\otimes (P^2-1))_{|\mathcal{E}_V^G} $. Let $(e_1, \cdots ,e_n)$ be a orthonormal basis of $V$. Let $s=\sum \limits_i e_i\otimes s_i$ be an element of $\mathcal{E}_V^G$. Then we have $\int_G g\cdot s=s$ so that an easy computation gives
$$
s=\sum \limits_{i,j}e_j\otimes \pi(C_{e_j,e_i}) s_i.
$$
But 
\begin{eqnarray*}
((P^G_V)^2-1)s&= &(\text{id}_V\otimes (P^2-1))(\sum \limits_{i,j}e_j\otimes \pi(C_{e_j,e_i})s_i)\\
&=&\sum \limits_{i,j} e_j\otimes (P^2-1)\circ \pi(C_{e_j,e_i})s_i\\
&=&\sum \limits_{t,j}e_j\otimes e_t^*\otimes (P^2-1)\circ \pi(C_{e_j,e_t})(s).
\end{eqnarray*}
Thus it follows that $(P^G_V)^2-1 = \sum \limits_{t,j}e_j\otimes e_t^*\otimes (P^2-1)\circ \pi(C_{e_j,e_t})$.
Moreover, we know that $(\dim V) C_{e_j,e_j}\star C_{e_j,e_t}=C_{e_j,e_t}$ and that $(P^2-1)\circ \pi(C_{e_j,e_j})$ are compact operators. Let $(R_n^j)_n$ be a sequence of finite rank operators on $\mathcal{E}$ which converges to $(P^2-1)\circ \pi(C_{e_j,e_j})$. The sequence of finite rank operators $(e_j\otimes R_n^j\circ \pi(C_{e_j,-}))$ then converges to $e_j\otimes (P^2-1)\pi(C_{e_j,e_j})\circ \pi(C_{e_j,-})$ for the operator norm induced by the inner product on $\mathcal{E}_V^G$. Indeed, we have: 
\begin{multline*}
\left\langle e_j\otimes \big((P^2-1)\circ \pi(C_{e_j,e_j})-R_n^j\big)\circ \pi(C_{e_j,-}) (s), \; s' \right\rangle_ {(V\otimes \mathcal{E})^G}\\
=\left\langle  \big((P^2-1)\circ \pi(C_{e_j,e_j})-R_n^j\big)\sum \limits_k \pi(C_{e_j,e_k}) s_k, \sum \limits_t \pi(C_{e_j,e_t}) s'_t\right\rangle_ {\mathcal{E}},
\end{multline*}
\end{proof}

\begin{defi}
The $\k$-multiplicity $m_P(V)$ of the irreducible unitary representation $V$ of $G$ in the index class  $\ind (P_0)$ is  the image of the class 
$[(\mathcal{E}_V^G,P_V^G)]\in \k\k(\mathbb{C},C(B))$ under the isomorphism $\k\k(\mathbb{C},C(B))\cong \k(B)$. So $m_P(V)$ is the class of a virtual vector bundle over $B$, an element of the topological $\k$-theory group   $\k(B)$. 
\end{defi}

Recall that the representation $V$ defines a class in the $\k$-theory of the $C^*$-algebra $C^*G$, and hence an element, denoted by  $[V]$, of the Kasparov group $\k\k(\mathbb{C}, C^*G)$. See \cite{julg1981produit:croise} for the details.

\begin{prop}
The class $[(\mathcal{E}_V^G,P_V^G)]$ coincides (as expected) with the Kasparov product
$$
[V]\underset{C^*G}{ \otimes}\ind(P_0)\in \k\k(\mathbb{C},C(B)).
$$
Thus the $\k$-multiplicity of $V$ in the index of $P_0$ is the element of $\k(B)$ corresponding to the Kasparov product $[V]\underset{C^*G}{ \otimes}\ind(P_0)$, through the isomorphism $\k\k(\mathbb{C},C(B))\cong \k(B)$.
\end{prop}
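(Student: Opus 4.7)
The plan is to identify both sides as explicit Kasparov cycles and to construct an isometric isomorphism of Hilbert $C(B)$-modules intertwining the operators. First, following \cite{julg1981produit:croise}, the class $[V]\in\k\k(\mathbb{C},C^*G)$ is represented by the even Kasparov cycle $(\mathcal{V},0)$, where $\mathcal{V}$ is $V$ equipped with the right $C^*G$-module structure $v\cdot\varphi=\int_G \varphi(g)\,g^{-1}v\, dg$ and the $C^*G$-valued inner product $\langle v|w\rangle_{C^*G}(g)=\langle v,gw\rangle_V=C_{v,w}(g)$. Since the operator on $\mathcal{V}$ is zero and $P$ commutes with every $\pi(\varphi)$ by Remark~\ref{lem:Ppi=piP}, no connection theory is required: the Kasparov product $[V]\underset{C^*G}{\otimes}\ind(P_0)$ is represented by
$$
\big(\mathcal{V}\,\hat{\otimes}_{\pi}\mathcal{E},\ 1\otimes P\big),
$$
whose Hilbert $C(B)$-module inner product is $\langle v\otimes s,w\otimes t\rangle=\langle s,\pi(\langle v|w\rangle_{C^*G})t\rangle=\langle s,\pi(C_{v,w})t\rangle$, exactly the formula already recorded for $\mathcal{E}^G_V$.

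Next, I will construct an isomorphism by averaging. Set
$$
\Phi:\mathcal{V}\,\hat{\otimes}_\pi\mathcal{E}\longrightarrow \mathcal{E}^G_V,\qquad \Phi(v\otimes s):=P_G(v\otimes s)=\int_G (gv)\otimes (g\cdot s)\, dg,
$$
with $P_G$ the orthogonal projection onto the $G$-invariants of $V\otimes\mathcal{E}$. The key compatibility $P_G(v\cdot\varphi\otimes s)=P_G(v\otimes\pi(\varphi)s)$ needed for $\Phi$ to descend to the internal tensor product follows from the explicit form of $v\cdot\varphi$ combined with the change of variable $k=gh^{-1}$ in the resulting double integral on $G$. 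Isometry of $\Phi$ is then immediate from self-adjointness of $P_G$:
$$
\langle\Phi(v\otimes s),\Phi(w\otimes t)\rangle=\langle v\otimes s,P_G(w\otimes t)\rangle=\int_G C_{v,w}(g)\,\langle s,g\cdot t\rangle\, dg=\langle s,\pi(C_{v,w})t\rangle,
$$
which matches the tensor-product inner product. Surjectivity onto a dense subspace is furnished by the decomposition $\sum_{i,j}e_j\otimes\pi(C_{e_j,e_i})s_i=\sum_i\Phi(e_i\otimes s_i)$ already exploited in the preceding lemma.

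Finally, using the $G$-invariance of $P$ and the fact that $g$ acts on $\mathcal{E}$ by unitaries commuting with $P$,
$$
(\mathrm{id}_V\otimes P)\circ\Phi(v\otimes s)=\int_G(gv)\otimes (g\cdot Ps)\, dg=\Phi(v\otimes Ps),
$$
so that $P^G_V\circ\Phi=\Phi\circ(1\otimes P)$. Thus $\Phi$ is a unitary isomorphism of the two Kasparov cycles, yielding the equality of classes in $\k\k(\mathbb{C},C(B))$. The main obstacle is essentially bookkeeping: one must pin down the right $C^*G$-module structure on $V$ so that the canonical tensor-product inner product literally reproduces $\langle s,\pi(C_{v,w})t\rangle$. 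With that convention in place, the vanishing of the operator on $\mathcal{V}$ removes all analytic subtleties from the Kasparov product, and the identification reduces to elementary averaging identities.
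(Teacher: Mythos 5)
Your proposal is correct and follows essentially the same route as the paper: represent $[V]\underset{C^*G}{\otimes}\ind(P_0)$ by the internal tensor product $(V\hat\otimes_{\pi}\mathcal{E},\,1\otimes P)$ (with trivial connection theory because the operator on $V$ is zero and $P$ commutes with $\pi$), and then identify this with $(\mathcal{E}^G_V,P^G_V)$ via the averaging map $\sum_i v_i\otimes s_i\mapsto\sum_i\int_G g\cdot(v_i\otimes s_i)\,dg$. You spell out a few details the paper leaves implicit (the explicit $C^*G$-module structure on $V$, the isometry check via self-adjointness of the projection) and defer the compactness of $(1\otimes P)^2-1$ to the preceding lemma rather than re-proving it, but the core argument is identical.
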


\begin{proof}
The class $[V]$ is given by the cycle $[(V,0)] \in \k\k(\mathbb{C},C^*G)$.  The operator $\text{id}_V\underset{C^*G}{ \otimes} P$ makes sense because $P$ commute with the action of $C^*G$. Using the $C^*G$-module structure on $V$, it follows that $\forall v\in V$, $v=\sum\limits_k e_k \cdot C_{e_k,v}$. Therefore $\forall s=\sum\limits_i v_i\otimes s_i \in V\underset{C^*G}{ \otimes}\mathcal{E}$, we have $s=\sum \limits_k e_k\otimes \sum \limits_i \pi(C_{e_k,v_i})s_i$. Using again the relation $(\dim V) C_{e_j,e_j}\star C_{e_j,v}=C_{e_j,v}$, we deduce again  that $(\text{id}_V\underset{C^*G}{\otimes} P)^2-1$ is compact. Moreover, the $C(B)$-modules $V\underset{C^*G}{\otimes} \mathcal{E}$ and $(V\otimes \mathcal{E})^G$ are isomorphic. The isomorphism is given by 
\begin{eqnarray*}
V\underset{C^*G}{\otimes}\mathcal{E} &\longrightarrow &(V\otimes \mathcal{E})^G\\
\sum \limits v_i\otimes s_i& \longmapsto &\sum \limits_i\int_{G}g\cdot (v_i\otimes s_i)\; dg.
\end{eqnarray*}
This map is well defined because we have 
$$
\int_G g\cdot (v\cdot \varphi \otimes s) dg = \int_G g\cdot(v\otimes \pi(\varphi)s) \; dg.
$$
But we have already seen that $\int_G g\cdot(v\otimes s)dg=\sum \limits_k e_k  \otimes \pi(C_{e_k,v})s$.
Therefore we have proved that $[\mathcal{E}_V^G,P_V^G]=[V]\underset{C^*G}{ \otimes}\ind(P_0)$.
\end{proof}

\section{The properties of the index class}
\noindent As before, we denote by $T^V_GM :=T^VM\cap T_GM$. 

\subsection{The index map}

\begin{prop}
The index class $\indm(P)$ only depends on the class $[\sigma(P)]\in \K(T^V_GM)$ of the principal symbol $\sigma (P)$, and the index map induces a group homomorphism:
$$\indm : \K(T^V_GM) \rightarrow \k\k_{\mathrm{G}}(C^*G,C(B)).$$
More precisely, the map $[\sigma] \mapsto \indm(P(\sigma))$ is well defined, when we choose some  quantization $P(\sigma)$  of $\sigma$. 
\end{prop}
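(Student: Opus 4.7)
The plan is to establish the proposition in three ingredients: (i) independence of the index class under changes of quantization for a fixed symbol, (ii) homotopy invariance in the principal symbol, and (iii) additivity under direct sums, which together yield the well-defined group homomorphism.

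For (i), I would start by fixing a $G$-transversally elliptic class $[\sigma]\in\K(T^V_GM)$ and two zeroth-order $G$-invariant quantizations $P_0^{(0)}$ and $P_0^{(1)}$ with the same principal symbol $\sigma$. The difference $P_0^{(1)}-P_0^{(0)}$ is a $G$-invariant family of order $-1$, so the linear path $P_0^{(t)}=(1-t)P_0^{(0)}+tP_0^{(1)}$ yields, for every $t\in[0,1]$, a $G$-transversally elliptic family with the same symbol $\sigma$. I would assemble the associated operators into a single operator $\widetilde P$ acting on the Hilbert $C([0,1])\otimes C(B)=C([0,1]\times B)$-module $C([0,1],\mathcal E)$, together with the induced representation $\widetilde\pi$ of $C^*G$. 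The verification that $(C([0,1],\mathcal E),\widetilde\pi,\widetilde P)$ is a $G$-equivariant Kasparov cycle is essentially the content of the earlier theorem of this section applied uniformly in $t$: the estimate in Proposition \ref{prop:inégalité:preuve:thm} depends only on the common principal symbol, so the compactness of $(1-\widetilde P^2)\widetilde\pi(\varphi)$ carries over. Evaluating at $t=0$ and $t=1$ then provides an operator homotopy between the two Kasparov cycles, so $\indm(P_0^{(0)})=\indm(P_0^{(1)})$ in $\k\K(C^*G,C(B))$.

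For (ii), suppose that $\sigma_t$, $t\in[0,1]$, is a homotopy of $G$-transversally elliptic symbols on $T^VM$ representing a path in $\K(T^V_GM)$. Choose a continuous family of $G$-invariant quantizations $P_0(\sigma_t)$ (using a $G$-invariant quantization procedure, for instance local charts and a partition of unity together with averaging over $G$); this is always possible because the space of zeroth-order $G$-invariant pseudodifferential quantizations of a given symbol is an affine space of order $-1$ operators. Repeating the argument of step (i) with $\widetilde P_t:=P(\sigma_t)$, the triple $(C([0,1],\mathcal E),\widetilde\pi,\widetilde P)$ is a Kasparov cycle over $(C^*G,C([0,1]\times B))$, whose evaluations at the endpoints coincide with $\indm(P(\sigma_0))$ and $\indm(P(\sigma_1))$. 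This gives the factorization through $\K(T^V_GM)$.

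For (iii), given two $G$-transversally elliptic symbols $\sigma^{(1)},\sigma^{(2)}$, the direct sum $\sigma^{(1)}\oplus\sigma^{(2)}$ is again $G$-transversally elliptic and represents $[\sigma^{(1)}]+[\sigma^{(2)}]$ in $\K(T^V_GM)$. Choosing the diagonal quantization $P(\sigma^{(1)})\oplus P(\sigma^{(2)})$, the associated Kasparov cycle is the direct sum of the two individual cycles, so $\indm(P(\sigma^{(1)})\oplus P(\sigma^{(2)}))=\indm(P(\sigma^{(1)}))+\indm(P(\sigma^{(2)}))$ in $\k\K(C^*G,C(B))$. Combined with (i) and (ii), this proves that $\indm$ descends to a well-defined group homomorphism on $\K(T^V_GM)$.

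The main obstacle is step (ii): one must produce a genuinely continuous family of quantizations parameterized by $t$ and verify that the resulting unbounded (or bounded) operator on $C([0,1],\mathcal E)$ fits into the Kasparov framework uniformly in $t$. Concretely, the symbolic inequality appearing in Proposition \ref{prop:inégalité:preuve:thm} must be made uniform in the homotopy parameter, so that the compactness of $(1-\widetilde P^2)\widetilde\pi(\varphi)$ holds globally rather than fiberwise. This is essentially an equicontinuity statement for the symbols of $1-P(\sigma_t)^2$, and it is here that the compactness of $[0,1]$ together with the continuity of the symbol homotopy is decisive.
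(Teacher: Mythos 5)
Your outline (homotopy invariance, additivity, independence of the choice of quantization) matches the Atiyah--Singer scheme the paper follows, but it contains a genuine gap: it never addresses the relation that actually defines the group $\K(T^V_GM)$. The paper recalls that $\K(T^V_GM) = \mathcal{C}(T^V_GM)/\mathcal{C}_\phi(T^V_GM)$, where $\mathcal{C}(T^V_GM)$ is the semigroup of homotopy classes of transversally elliptic symbols and $\mathcal{C}_\phi(T^V_GM)$ is the sub-semigroup of symbols whose restriction to the sphere bundle is induced by a bundle isomorphism over $M$. Your steps (i)--(iii) produce at best a well-defined additive map on $\mathcal{C}(T^V_GM)$; they do not show it descends to the quotient. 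Two transversally elliptic symbols can represent the same class in $\K(T^V_GM)$ without being homotopic through transversally elliptic symbols: one only knows they become homotopic after stabilizing by elements of $\mathcal{C}_\phi$. So after your (ii) and (iii) one needs a further vanishing statement: if $\sigma(P)$ is induced by a bundle isomorphism over $M$, then $\indm(P) = 0$. This is the paper's item~3, and it is what actually makes the map factor through $\K(T^V_GM)$; it holds because in that case $P$ can be chosen invertible (a bundle-map multiplication operator), so the Kasparov cycle $(\mathcal{E},\pi,P)$ is degenerate. Your sentence ``Combined with (i) and (ii), this proves that $\indm$ descends to a well-defined group homomorphism on $\K(T^V_GM)$'' is therefore not justified as written.

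Two smaller remarks. First, your step (i) is a special case of step (ii): two quantizations of the same $\sigma$ are joined by a straight-line operator path over the constant homotopy of symbols, so you need not treat it separately. Second, the ``main obstacle'' you raise about uniformity in $t$ is real but not where the difficulty lies; as in the paper's proof, the Kasparov-cycle estimates apply over the compact base $[0,1]\times B$ exactly as over $B$, and the homotopy of cycles is the operator homotopy built into the definition of $\k\K(C^*G,C(B))$.
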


\begin{proof} This is classical and we follow \cite{Atiyah-Singer:I} and \cite{atiyah1974elliptic}. Let ${\mathcal C}(T^V_GM)$ be the semigroup of homotopy classes of transversally elliptic symbols of order $0$ and ${\mathcal C}_{\phi}(T^V_GM)\subset {\mathcal C}(T^V_GM)$ the subspace composed of classes of symbols whose restriction to the sphere bundle of $T^V_GM$, is induced by a bundle isomorphism over $M$. We know that $\K(T^V_GM):={\mathcal C} (T^V_GM)/{\mathcal C}_{\phi}(T^V_GM)$, \cite{Atiyah-Singer:I}.
It thus suffices  to show that:
\begin{enumerate}
\item if $[\sigma]=[\sigma ']$ in ${\mathcal C}(T^V_GM)$ then $\indm \sigma =\indm \sigma '$;
\item for $P $ and $Q$ families of $G$-transversally elliptic operators, we have $\indm (P\oplus Q)= \indm (P)+\indm(Q)$,
\item if $P$ has symbol which is induced by a bundle isomorphism over $M$ then $\indm(P)=0$.
\end{enumerate}
1. Let $\sigma_t$ be a homotopy between $\sigma $ and $\sigma'$ composed of transversally elliptic symbols along the fibers, the quantization of this homotopy give an operator homotopy, and hence we end up with   the same class in $\k\K(C^*G,C(B))$, by definition of the Kasparov group $\k\K(C^*G,C(B))$.\\
2. If $P : C^{\infty ,0}(M,E) \rightarrow C^{\infty ,0}(M,E)$ and $Q: C^{\infty ,0}(M,F) \rightarrow C^{\infty ,0}(M,F)$ are families of $G$-transversally elliptic operators, then $\indm(P\oplus Q )=[(\mathcal{E}\oplus \mathcal{F}, \pi_{\mathcal{E}}\oplus \pi_{\mathcal{F}} ,P\oplus Q)]=[(\mathcal{E}, \pi_{\mathcal{E}} ,P)]+[(\mathcal{F},\pi_{\mathcal{F}}, Q)]=\indm(P)+\indm(Q)$.\\
3. This is clear, see for instance \cite{Atiyah-Singer:IV}. 
\end{proof}

\subsection{The case of free actions }
Let $G$ and $H$ be compact Lie groups. Let $p: M \rightarrow B$ be a $G\times H $-equivariant fibration of compact manifolds. We suppose that the action of $G\times H$ is trivial on $B$ and that $H$ acts freely on $M$. Denote by $\pi^{H}: M \rightarrow M/H$ the $G$-equivariant projection. By \cite{atiyah1974elliptic}, we know that $\pi^H$ induces an isomorphism
$$\pi^{H*} : \K(T^{V}_G(M/H)) \rightarrow \text{K}_{G\times H}(T^{V}_{G\times H}M)$$ 
which identifies the classes of symbols of $G$-invariant families of $G$-transversally elliptic operators on $M/H \rightarrow B$ with those of symbols of $G\times H$-invariant families of $G\times H$-transversally elliptic operators on $M\rightarrow B$. Denote by $\pi : T^VM \rightarrow M$ and by $\bar{\pi} : T^V(M/H)\rightarrow M/H$ the projections. The inverse map of $\pi_H^*$ is given as follow. Let $\sigma : \pi^*E^+ \rightarrow \pi^*E^-$ be a $G\times H$-transversally elliptic symbol along the fibers on $M$. Then we define a $G$-equivariant symbol $\overline{\sigma} : \bar{\pi}^*(E^+/H) \rightarrow \bar{\pi}^*(E^-/H)$ on $T^V(M/H)$ which is $G$-transversally elliptic along the fibers by setting $\overline{\sigma}(\bar{m},\bar{\xi})(\bar{v})=\sigma(m,\xi)(v)$, where $\bar{m}=\pi^H(m)\in M/H$, $\bar{\xi}=\pi^H(\xi)\in T^V_{\bar{m}}(M/H)$, $\bar{v}=\pi^H(v)\in (E/H)_{\bar{m}}$.

Let $\hat{H}$ be the space of isomorphism classes of unitary irreducible representations of $H$. Denote by $\chi_\alpha$ the character associated with the representation $\alpha$ and $W_\alpha$ the corresponding vector space. We have a natural map:
$$\begin{array}{clc}
R(H\times G) &\rightarrow &\K(M/H)\\
V &\mapsto &\underline{V}^* 
\end{array}$$
where $\underline{V}^*=M \times_H V^*$ is the homogeneous $G$-vector bundle associated to $V^*$ on $M/H$.

\begin{thm}$\cite{atiyah1974elliptic}$\label{thm:action:libre}
The following diagram is commutative:
 $$\xymatrix{\K(T^V_G(M/H))\ar[r]^{\pi^{H*}} \ar[d]_{\underline{W}_{\alpha}^*\otimes }~&~\mathrm{K}_{\mathrm{G\times H}}(T^V_{G\times H}M)\ar[r]^{\hspace*{-0.5cm}\mathrm{Ind}_{}^{\mathrm{ M|B}}}~&~\k\k(C^*(G\times H),C(B))\ar[d]^{\chi_\alpha \otimes_{C^*H}}\\
\K(T^V_G(M/H))\ar[rr]_{\mathrm{Ind}^{\mathrm{M/H|B}}} ~&&~\k\k(C^*G,C(B)). 
 }$$
In other words, if $a\in \K(T^{*V}_G(M/H))$ then $$\mathrm{Ind}^{\mathrm{M/H|B}}( \underline{W}_{\alpha}^*\otimes a)=\chi_{\alpha } \otimes_{C^*H} \ind (\pi^{H*}a),$$ 
\end{thm}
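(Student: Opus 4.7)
The plan is to show that the two Kasparov cycles representing the two sides of the equation are in fact canonically isomorphic in $\k\k(C^*G, C(B))$. Fix a $G$-invariant family $P$ of order-zero $G$-transversally elliptic operators along the fibers of $M/H \to B$ whose principal symbol represents $a$. The pullback $\tilde P := (\pi^H)^*P$, acting on sections of $\pi^{H*}E$, is a $G \times H$-invariant family of $G \times H$-transversally elliptic operators realizing $\pi^{H*}a$, while the twist $P \otimes \mathrm{id}_{\underline W_\alpha^*}$ on $\Gamma(M/H,\underline W_\alpha^* \otimes E)$ realizes $\underline W_\alpha^* \otimes a$. Thus
\[
\indm(\pi^{H*}a)=[\mathcal E_M,\pi_M,\tilde P], \qquad \mathrm{Ind}^{M/H|B}(\underline W_\alpha^*\otimes a)=[\mathcal E_{M/H}^\alpha,\pi_{M/H},P\otimes\mathrm{id}_{\underline W_\alpha^*}],
\]
where $\mathcal E_M$ and $\mathcal E_{M/H}^\alpha$ are the Hilbert $C(B)$-modules attached to $\pi^{H*}E$ over $M$ and to $\underline W_\alpha^*\otimes E$ over $M/H$ respectively.

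Next I would apply the $\k$-multiplicity description established in the previous subsection, specialized to the subgroup $H \subset G \times H$, in order to compute the right-hand Kasparov product explicitly as
\[
\chi_\alpha\otimes_{C^*H}\indm(\pi^{H*}a)=\bigl[(W_\alpha\otimes\mathcal E_M)^H,\;\pi_M,\;(\mathrm{id}_{W_\alpha}\otimes\tilde P)\vert_{(W_\alpha\otimes\mathcal E_M)^H}\bigr],
\]
with the residual $G$-action coming from the restriction of the $C^*(G\times H)$-representation. This reduces the problem to comparing this cycle with the cycle on the left-hand side.

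The heart of the argument is then the construction of a canonical $G$-equivariant, $C(B)$-linear unitary isomorphism of Hilbert modules
\[
\Phi : \mathcal E_{M/H}^\alpha \longrightarrow (W_\alpha\otimes\mathcal E_M)^H
\]
that intertwines $P\otimes\mathrm{id}_{\underline W_\alpha^*}$ with the restriction of $\mathrm{id}_{W_\alpha}\otimes \tilde P$. Fiberwise at $b\in B$, $\Phi_b$ is the classical Peter-Weyl identification of the $\alpha$-isotypical component of $L^2(M_b,\pi^{H*}E_b)$ (tensored with $W_\alpha$ and $H$-averaged) with $L^2(M_b/H,\underline W_\alpha^*\otimes E_b)$, which comes from the identification of associated bundles $M\times_H(W_\alpha\otimes W_\alpha^*)^H = M/H$. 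That $\Phi$ intertwines the operators is then immediate: $\tilde P$ is the lift of $P$, so its restriction to the $\alpha$-isotypical part is, by naturality of pullback and the fact that $\tilde P$ commutes with the $H$-action, exactly the twist of $P$ by $\underline W_\alpha^*$.

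The main obstacle I foresee is not the fiberwise identification, which is standard harmonic analysis, but assembling it into a bona fide isomorphism of Hilbert $C(B)$-modules: one has to check continuity and $C(B)$-linearity of $\Phi$ globally, using local trivializations of the principal $H$-bundle $\pi^H$ together with the chosen $G$-invariant fiberwise measures $(\mu_b)$, and to keep scrupulous track of the duality convention (reflected in the appearance of $W_\alpha^*$ rather than $W_\alpha$ in the associated bundle) that makes the diagram of the theorem commute. Once $\Phi$ is constructed, the two Kasparov cycles coincide literally, giving the desired equality of classes in $\k\k(C^*G,C(B))$.
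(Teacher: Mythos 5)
Your overall plan is the same as the paper's: represent $\pi^{H*}a$ by a $G\times H$-invariant family $\tilde P$ lifting a family $P$ on $M/H$, use the $\k$-multiplicity/Peter--Weyl description to cut out the $\alpha$-isotypical part of the cycle for $\indm(\pi^{H*}a)$, and identify the resulting Hilbert $C(B)$-module $(W_\alpha\otimes\mathcal E_M)^H$ with sections of $\underline W_\alpha^*\otimes E$ over $M/H$ via the associated-bundle construction. But there is a genuine gap at the very first step, and it is the step you dismiss as obvious. The object ``$\tilde P := (\pi^H)^*P$'' is not defined: a pseudodifferential operator on $M/H$ does not pull back along the $H$-principal bundle $\pi^H$ to a pseudodifferential operator on $M$ by any universal procedure (the pullback of a section of $E$ over $M/H$ is only an $H$-invariant section over $M$, and $P$ has no natural action on arbitrary sections of $\pi^{H*}E$). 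This is exactly the nontrivial point the paper addresses and you skip. Meanwhile, the issue you flag as the ``main obstacle'' --- assembling the fiberwise Peter--Weyl isomorphism into a continuous $C(B)$-linear unitary --- is comparatively routine once one has a well-defined lift, and the paper does not dwell on it.

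The correct construction, as in the paper, goes the other way and requires local work: choose a trivializing cover $\{O_j\}$ of the principal $H$-bundle $\pi^H:M\to M/H$ with a subordinate partition of unity $\{\phi_j^2\}$; over each $M_j\cong O_j\times H$, the compressed family $\phi_jP\phi_j$ lifts tautologically (acting in the $O_j$-variable, identity in the $H$-variable) to a family $\tilde P^j$ on $M_j$; sum and then average over $G$ to produce a $G\times H$-invariant family $\tilde A=\int_G g\cdot(\sum_j\tilde P^j)\,dg$ on $M$. Two things must then be verified, and both are essential: (a) $\sigma(\tilde A)$ represents $\pi^{H*}a$, which holds because taking principal symbols commutes with local lifting and with averaging over the compact group $G$; (b) the restriction of $\tilde A$ to $H$-invariant sections is the $G$-averaged family $A=\int_G g\cdot(\sum_j\phi_j P\phi_j)\,dg$ on $M/H$, which need not literally equal $P$ but has the same symbol class $a$, hence the same index class. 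Without this construction, your assertion that $\tilde P$ ``is the lift of $P$'' and that the intertwining of operators under $\Phi$ is ``immediate'' has no content, because there is no operator there to intertwine. If you replace your pullback step by this trivialization-and-average argument and then note (b), the rest of your argument does go through and agrees with the paper's proof.
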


\begin{remarque}
In particular, as an element of $\mathrm{Hom}\big(R(H),\k\k(C^*G,C(B))\big)$ we have
 $$\mathrm{Ind}^{\mathrm{M|B}} (\pi^{H*}a)=\sum \limits_{\alpha \in \hat{H}}\hat{\chi}_\alpha \otimes \mathrm{Ind}^{\mathrm{M/H|B}}( \underline{W}_{\alpha}^*\otimes a) , $$
where  $\hat{\chi}_{\alpha }$ is the element associated to $W_{\alpha}$ in $\k\k(C^*H,\mathbb{C})$.
\end{remarque}

\begin{proof}
Let $\tilde{A}_0 : C^{\infty , 0}(M,\pi^{H*}E^+)\rightarrow C^{\infty , 0}(M,\pi^{H*}E^-)$ be a $G\times H$-invariant family of $G\times H$-transversally elliptic pseudodifferential operators  associated to $\pi^{H*}a\in \Kh(T^{*V}_{G\times H}M)$. Denote by $\pi^{H*}\mathcal{E}$ the $C^*$-completion of $C^{\infty , 0}(M,\pi^{H*}E)$ as a Hilbert $C(B)$-module.
We have:
$$\ind(\pi^{H*}a)=[\pi^{H*}\mathcal{E}, \pi_{G\times H},\tilde{A}]=\sum \limits_{\alpha\in \hat{H}}\hat{\chi}_{\alpha}\otimes [(W_\alpha \otimes \pi^{H*}\mathcal{E})^H,\pi_G, (id_{W_\alpha^*}\otimes \tilde{A})_{|G-inv}],$$
where $\tilde{A}$ is the self-adjoint operator associated with $\begin{pmatrix}
0&\tilde{A}_0^*\\ \tilde{A}_0&0
\end{pmatrix}$ as in Proposition \ref{Unbounded}.  
We shall use the following isomorphism:
$$\begin{array}{clc}
C^{\infty ,0}(M,W_\alpha^* \otimes \pi^{H*}E)^H &\longrightarrow &C^{\infty ,0}(M/H,\underline{W}_{\alpha}^* \otimes E)\\
s&\longmapsto &s_{M/H},
\end{array}$$
given by $s_{M/H}(x)=[m,s(m)]$, for $x=\pi^H(m)$ which allows to identify  $(W_{\alpha}^*\otimes \pi^{H*}\mathcal{E})^H$ with $\overline{C^{\infty ,0}(M/H,\underline{W}_{\alpha}^* \otimes E)}^{C(B)}$. \\
It remains to show that for a good choice of $id_{W_\alpha^*}\otimes \tilde{A}$, the family of operators $id_{\underline{W}_{\alpha}^* } \otimes A$ is a family of pseudodifferential operators associated to $\underline{W}_{\alpha}^* \otimes a \in \K(T^{*V}_G(M/H))$.\\
Let $A_1 : C^{\infty , 0}(M/H,\pi^{H*}E^+)\rightarrow C^{\infty ,0}(M/H,\pi^{H*}E^-)$ be a family of pseudodifferential operators of order $1$ associated to $a$. Let us choose a trivializing system $\{O_j\}$ of $\pi^H : M \rightarrow M/H$ and a subordinate partition of unity $\{\phi_j^2\}$. Let $M_j =(\pi^{H})^{-1}(O_j)\simeq O_j\times H$, we denote by $A^j_1=\phi_jA_1\phi_j$ the induced family of pseudodifferential operators of order $1$ over $O_j$, it lifts to a family of pseudodifferential operators $\tilde{A}_1^j \in \mathcal{P}^1(M_j,\pi^{H*}E)$ on $M_j$ \cite{Atiyah-Singer:IV}. Thanks to the partition $\phi_j$, this family of pseudodifferential operators can be gathered into  a family of pseudodifferential operators on $M$. By averaging the sum of the $\tilde{A}_1^j $ over $G$, we obtain:
$$\tilde{A}=\int_G g\cdot (\sum \limits_j \tilde{A}^j_1)\; dg, $$
which is a family of pseudodifferential operators of order $1$ on $M$. Using that the symbol commutes with lifting and averaging, we see  that $\sigma(\tilde{A})$ represents $\pi^{H*}a$. Moreover,  the restriction of $\tilde{A}$ to the $H$-invariant sections is just the $G$-invariant family $A=\int_G g\cdot (\sum \limits_j A^j_1)\; dg $ of order $1$. \\
Therefore the family $A : C^{\infty , 0}(M/H,E^+)\rightarrow C^{\infty , 0}(M/H,E^+)$ of pseudodifferential operators of order $1$ satisfies $[\sigma(A)]=a$, and lifts to a family of pseudodifferential operators $\tilde{A}$, of order $1$ with $[\sigma(\tilde{A})]=\pi^{H*}a$.  By restricting the map $\tilde{A}$ into the map $\tilde{A}: C^{\infty , 0}(M,\pi^{H*}E^+)\rightarrow C^{\infty ,0}(M,\pi^{H*}E^-)$, we recover $A$. So we have $[(\pi^{H*}\mathcal{E})^H,\pi_G, \tilde{A}_{|H-inv}]=\ind (a)$, by replacing $a$ by $\underline{W}_{\alpha}^*\otimes a$, we get the result.
\end{proof}

\begin{cor}
Let $p : M\rightarrow B$ be a $G\times H$-equivariant fibration. Suppose that the action of $G\times H$ is trivial on $B$ and that the action of $H$ is free on $M$. Then for $a\in \K(T^{*V}(M/H))$, we have the following equality:
$$\chi_\alpha \otimes_{C^*G}\indm (\pi^{H*}a)=\indmsurh(\underline{W}_{\alpha}^*\otimes a ),$$
where $\pi^{H} : M\rightarrow M/H$ is the projection and $ \indmsurh(\underline{W}_{\alpha}^*\otimes a )$ is given by the families index along the fibers of $M/H\rightarrow B$.\\
\end{cor}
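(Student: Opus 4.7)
The plan is to derive this corollary as a direct reading of Theorem \ref{thm:action:libre}: the commutativity of the square stated there \emph{is} the claimed identity, once one keeps straight which $C^*$-algebra one is tensoring over. Concretely, I would chase $a\in \K(T^V_G(M/H))$ around the diagram of Theorem \ref{thm:action:libre}: the left-then-bottom composition produces $\indmsurh(\underline{W}_\alpha^*\otimes a)$, while the top-then-right composition produces $\chi_\alpha\otimes_{C^*H}\ind(\pi^{H*}a)$. Equating the two then gives the stated formula. The only bookkeeping is the canonical identification $C^*(G\times H)\simeq C^*G\otimes C^*H$, under which $\chi_\alpha\in \k\k(\mathbb{C},C^*H)$ is paired with $\ind(\pi^{H*}a)\in \k\k(C^*(G\times H),C(B))$ over the $C^*H$-factor to produce an element of $\k\k(C^*G,C(B))$; with that identification fixed, the equation in the corollary coincides termwise with the one displayed in the theorem.

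If one preferred an argument not invoking Theorem \ref{thm:action:libre} as a black box, the natural route would be to rerun its proof for the specific element $\underline{W}_\alpha^*\otimes a$. That is, pick a family $A$ of $G$-transversally elliptic operators of order $1$ on $M/H\to B$ with $[\sigma(A)]=\underline{W}_\alpha^*\otimes a$, lift it through the covering $\pi^H:M\to M/H$ via a partition of unity subordinated to a local trivialization and then average over $G$ to obtain a $G\times H$-invariant family $\tilde A$ on $M\to B$ whose symbol represents $\pi^{H*}(\underline{W}_\alpha^*\otimes a)$. The isomorphism
$$
(W_\alpha^*\otimes \pi^{H*}\mathcal{E})^H \;\cong\; \overline{C^{\infty,0}(M/H,\underline{W}_\alpha^*\otimes E)}^{C(B)}
$$
used in the proof of the theorem identifies the Kasparov cycle $\bigl((W_\alpha^*\otimes\pi^{H*}\mathcal{E})^H,\pi_G,(\mathrm{id}_{W_\alpha^*}\otimes\tilde A)_{|H\text{-inv}}\bigr)$ on the left with the cycle representing $\indmsurh(\underline{W}_\alpha^*\otimes a)$ on the right, yielding the equality in $\k\k(C^*G,C(B))$.

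The only delicate point I anticipate is a notational one: verifying that, under the tensor decomposition $C^*(G\times H)\simeq C^*G\otimes C^*H$, the slant product $\chi_\alpha\otimes_{C^*H}-$ really coincides with the operation written $\chi_\alpha\otimes_{C^*G}\indm(-)$ in the statement. This is a matter of recalling that $\indm(\pi^{H*}a)$ here means the full $G\times H$-equivariant index class of the $G\times H$-invariant symbol $\pi^{H*}a$, not its $G$-equivariant image; once this is unwound there is no further content to check.
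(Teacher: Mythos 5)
Your proof is correct and matches what the paper does: the paper gives no separate proof for this corollary because it is, as you observe, exactly the second displayed formula of Theorem \ref{thm:action:libre} transcribed into the shorthand notation $\indm$, $\indmsurh$. Your first paragraph (chase the diagram of the theorem, read off the two compositions) is the whole argument, and your alternative route of re-running the theorem's proof for $\underline{W}_\alpha^*\otimes a$ is also fine but unnecessary here.

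One small point where you soften something that should just be stated plainly: the $\otimes_{C^*G}$ in the displayed equation of the corollary is a typo for $\otimes_{C^*H}$. You flag this as a ``delicate notational point'' and try to explain it away by noting that $\indm(\pi^{H*}a)$ really means the full $G\times H$-equivariant class in $\k\k(C^*(G\times H),C(B))$, but that observation, while true, does not make $\otimes_{C^*G}$ correct --- a slant product of $\chi_\alpha\in\k\k(\mathbb{C},C^*H)$ against $\k\k(C^*G\otimes C^*H,C(B))$ over the $C^*G$-factor would land in $\k\k(C^*H,C(B))$, not in $\k\k(C^*G,C(B))$ where $\indmsurh(\underline{W}_\alpha^*\otimes a)$ lives. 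Only the product over $C^*H$ type-checks, which is also what the theorem says. So the right resolution is simply to read $\otimes_{C^*H}$ there, and likewise to read the source of $a$ as $\K(T^{*V}_G(M/H))$ rather than $\K(T^{*V}(M/H))$; with those two readings the corollary is verbatim the theorem and there is nothing left to prove.
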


\noindent
In particular, as an element of 
$\mathrm{Hom}\big(R(H),\k\k(\mathbb{C},C(B))\big)$ we have:
 $$\indm (\pi^{H*}a)=\sum \limits_{\alpha \in \hat{H}}\hat{\chi}_\alpha \otimes \indmsurh( \underline{W}_{\alpha}^*\otimes a) , $$
where $\hat{\chi}_{\alpha }$ is the element associated to $W_{\alpha}$ in $\k\k(C^*H,\mathbb{C})$.

\subsection{Multiplicative property}
Let $G$ and $H$ be two compact Lie groups.
Let $p:M \rightarrow B$ be a $G$-equivariant fibration of compact manifolds and let $p' : M'\rightarrow B'$ be a $G\times H$-equivariant fibration of compact manifolds. We suppose that $G$ acts trivially on $B$ and that $G$ and $H$ act trivially on $B'$. If $B=B'$ then the fiber product $M\times_B M'$ is a  $G\times H$-equivariant fibration on $B$. 
The purpose of this section is to compute the index of the product $a\sharp b \in \Kh(T^{*V}_{G\times H}(M\times_BM'))$ of $a\in \K(T^{*V}_GM)$ and $b\in \text{K}_{G\times H}(T^{*V}_HM')$ in terms of the index of $a$ and the index of $b$.
Recall that the product of $a$ and $b$ is given by the sharp-product $a\sharp b=\begin{pmatrix}a\otimes 1&-1\otimes b^*\\1\otimes b &a^*\otimes 1 \end{pmatrix}$, that we restrict to $T^{*V}_{G\times H}(M\times_BM')$  \cite{atiyah1974elliptic}, see too \cite{Atiyah-Singer:I}. This product is well defined since any element of $\K(T^{*V}_GM)$ can be represented by a symbol on $T^VM$ which is invertible on $ T^{V}_GM\setminus M $, see again \cite{atiyah1974elliptic}.\\
\medskip
Let $\Delta_B : B \rightarrow B\times B$ be the diagonal map. We denote by $[\Delta_B]$ the Kasparov element in $\k\k(C(B)\otimes C(B),C(B))$ associated to $(C(B), \Delta_B^*,0)$.\\
Recall the Kasparov descent \cite{Kasparov1988}. Let $A$ and $D$ be $G$-$C^*$-algebras. For a given $G$-$C^*$-algebra $A$ we recall the crossed product $C^*$-algebra $A\rtimes G$. More precisely, the space $C(G,A)$ is an involutive algebra for the rules
$$a_1\cdot a_2(t)=\int_G a_1(s)\cdot s(a_2(s^{-1}t)) ds \mathrm{~and~} a^*(t)=t(a(t^{-1}))^*,~a,~a_1,~a_2 \in C(G,A).$$
The completion of the image of $C(G,A)$ by the left regular representation, $C(G,A) \rightarrow \mathcal{L}(L^2(G,A))$ given by 
$$(a\cdot l)(t)=\int_G t^{-1}(a(s))\cdot l(s^{-1}t)ds$$
where $a\in C(G,A)$, $l\in L^2(G,A)$, is the $C^*$-algebra $A\rtimes G$. Let $\mathcal{E}$ be a $G$-$D$-Hilbert module. Define a right action and a scalar product by 
$$e\cdot d(s)=\int_G e(t)t(d(t^{-1}s))dt \mathrm{~and~} \langle e_1,e_2\rangle  (s)=\int_G t^{-1}(\langle e_1(t),e_2(ts)\rangle  _{\mathcal{E}})dt,$$
where $e$, $e_1$, $e_2 \in C(G,\mathcal{E})$ and $d\in C(G,D)$ then the completion of $C(G,\mathcal{E})$ with respect to this Hilbert structure defines a $D\rtimes G$-Hilbert module. If $\pi : A \rightarrow \mathcal{L}_D(\mathcal{E})$ is a $\ast$-morphism then the map $\pi^{\rtimes G } :A\rtimes G \rightarrow \mathcal{L}_D(\mathcal{E}\rtimes G)$ given by $(\pi^{\rtimes G}(a)e)(t)=\int_G\pi(a(s)) s(e(s^{-1}t))ds$ is a $\ast$-morphism. If $T\in \mathcal{L}_D(\mathcal{E})$ then $T$ induces an operator $\tilde{T}\in \mathcal{L}_{D\rtimes G}(\mathcal{E}\rtimes G)$ defined by $(\tilde{T}e)(s)=T(e(s))$.\\

\begin{defi}
The Kasparov descent map $j^G : \k\K(A, D) \rightarrow \k\k(A\rtimes G,D\rtimes G)$ is given by $[(\mathcal{E},\pi ,T)]\rightarrow [(\mathcal{E}\rtimes G,\pi^{\rtimes G},\tilde{T}]$.
\end{defi}

\begin{thm}\label{thm:multiplicativité:indice}
Let $a\in \K(T^{*V}_GM)$ and $b\in \Kh(T^{*V}_HM')$. When $B=B'$ we denote by $i : M\times_B M' \hookrightarrow M\times M'$ the inclusion. 
\begin{enumerate}
\item[1.] The index of $a\sharp b \in \Kh(T^{*V}_{G\times H}(M\times M'))$ is given by the following product:
$$\mathrm{Ind}^{\mathrm{M\times M'|B\times B'}} (a\sharp b)=j^G\indm(b)\otimes_{C^*G} \mathrm{Ind}^{\mathrm{M'|B'}}(a).$$
\end{enumerate}
The inclusion $T^{*V}_{G\times H}(M\times_BM') \hookrightarrow T^{*V}_{G\times H}(M\times M')$ is a $\mathrm{(}G\times H$-equivariant$\mathrm{)}$ proper inclusion.
\begin{enumerate}
\item[2.] The index of $i^*(a\sharp b) \in \Kh(T^{*V}_{G\times H}(M\times_BM'))$ is given by:

$$\mathrm{Ind}^{\mathrm{M\times_BM'|B}} (i^*a\sharp b)=\mathrm{Ind}^{\mathrm{M\times M'|B}} (a\sharp b)\otimes_{C(B\times B)}[\Delta_B].$$

\end{enumerate}

\end{thm}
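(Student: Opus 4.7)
The plan is to follow the classical Atiyah--Singer multiplicativity scheme, now lifted to Kasparov's bivariant setting via the unbounded picture of Section~2.3 and the descent functor $j^G$. For Part~1, I first choose convenient representatives: a $G$-invariant, order-one, $G$-transversally elliptic family $A$ on $M|B$ with $[\sigma(A)]=a$, and a $G\times H$-invariant, order-one, $H$-transversally elliptic family $B$ on $M'|B'$ with $[\sigma(B)]=b$. Form the sharp product operator
$$
A\sharp B := \begin{pmatrix} A\otimes 1 & -1\otimes B^* \\ 1\otimes B & A^*\otimes 1\end{pmatrix}
$$
as a pseudodifferential family on $M\times M'$ fibered over $B\times B'$, and check that its principal symbol represents $a\sharp b$ and that it is $G\times H$-transversally elliptic along the vertical fibers.

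The core of Part~1 is to identify the index class of $A\sharp B$ with the Kasparov product $j^G\,\mathrm{Ind}^{\mathrm{M'|B'}}(b)\otimes_{C^*G}\mathrm{Ind}^{\mathrm{M|B}}(a)$. Working in the unbounded picture of Definition~\ref{def:indice:NB}, I would identify the Hilbert $C(B)\otimes C(B')$-module attached to $A\sharp B$, via the $G$-crossed-product construction of Kasparov's descent applied to the $M'$-factor, with the internal tensor product $(\mathcal{E}_B\rtimes G)\otimes_{C^*G}\mathcal{E}_A$. Under this identification, $1\otimes B$ acts on the left tensor factor while $A\otimes 1$ becomes a Connes--Skandalis connection for $\mathcal{E}_A$. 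I would then invoke Kucerovsky's criterion: the graded commutator $[A\otimes 1,\,1\otimes B]$ is bounded on a common core since the two operators act on disjoint tensor factors, the connection condition is built into the construction, and positivity of $1-(A\sharp B)^2$ modulo compacts follows from Proposition~\ref{prop:inégalité:preuve:thm} applied separately in the $G$- and $H$-transversal directions. This yields the desired equality of $\k\k$-classes.

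For Part~2, the identity follows by naturality of the index under base restriction. The closed $G\times H$-equivariant embedding $i:M\times_B M'\hookrightarrow M\times M'$ pulls back symbols, and the family of fiberwise operators associated with $i^*(a\sharp b)$ is the pullback of the continuous field of fiberwise operators of $A\sharp B$ along the diagonal $\Delta_B:B\to B\times B$. Since such a base-change is implemented in $\k\k$-theory by Kasparov product with the class $[\Delta_B]\in\k\k(C(B)\otimes C(B),C(B))$, the claimed identity
$$
\mathrm{Ind}^{\mathrm{M\times_B M'|B}}(i^*(a\sharp b))=\mathrm{Ind}^{\mathrm{M\times M'|B\times B}}(a\sharp b)\otimes_{C(B)\otimes C(B)}[\Delta_B]
$$
follows from the functoriality and associativity of the cup-product.

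The principal obstacle is the product verification in Part~1: neither $A$ nor $B$ is elliptic in its own right, only transversally elliptic, so the Kucerovsky positivity condition cannot be deduced from standard elliptic estimates and must be extracted simultaneously from the $G$- and $H$-transversal directions, using the Kasparov-type decomposition of $1-(A\sharp B)^2$ furnished by Proposition~\ref{prop:inégalité:preuve:thm}. A secondary subtlety is the bookkeeping for $j^G\,\mathrm{Ind}^{\mathrm{M'|B'}}(b)$: one must carefully match Kasparov's descent construction with the explicit tensor factorization of the Hilbert module $\mathcal{E}_{A\sharp B}$, so that the resulting unbounded cycle is manifestly recognized as the product claimed in the theorem.
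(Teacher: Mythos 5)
Your plan matches the paper's architecture: choose order-one representatives, pass to the unbounded picture, identify the product module with a descent-then-internal tensor product over $C^*G$, and then verify that the operator $\tilde B\otimes 1+1\otimes A$ represents the product. Part~2 is also handled exactly as the paper does, by a module-change along $\Delta_B$.

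Where you genuinely diverge from the paper is in the criterion used to recognise the operator as the Kasparov product. The paper works entirely in the bounded picture: it writes the Woronowicz transform as
$$Q(\tilde B\otimes 1+1\otimes A)=M^{1/2}\,Q(\tilde B)\otimes 1+N^{1/2}\,1\otimes Q(A),$$
with $M=(1+\tilde B^2\otimes 1)(1+\tilde B^2\otimes 1+1\otimes A^2)^{-1}$ and $N=(1+1\otimes A^2)(1+\tilde B^2\otimes 1+1\otimes A^2)^{-1}$, and then proves by hand that $M$ is a $0$-connection, $N$ is a $1$-connection, and that $[Q(\tilde B\otimes 1+1\otimes A),Q(\tilde B)\otimes 1]=2MQ(\tilde B)^2\otimes 1\geq 0$ after compressing by $\pi_H^{\rtimes G}(\theta)\otimes 1$. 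You replace this with Kucerovsky's criterion for unbounded modules. That is a legitimate and, for a sum $D_1\otimes 1+1\otimes D_2$ acting on separate factors, arguably cleaner route, since the graded anticommutator $[D_1\otimes 1,1\otimes D_2]$ vanishes identically and Kucerovsky's semi-boundedness condition becomes trivially $2\|D_1\otimes 1\xi\|^2\geq 0$. What you gain is that you never need to manipulate the resolvent factors $M$ and $N$; what the paper's route gains is that it stays within the bounded connection formalism of Connes--Skandalis that the rest of the paper uses (e.g.\ Lemma~\ref{lem:support}, Proposition~\ref{prop:Q-connexion}), and it makes the positivity computation completely explicit.

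One place where your sketch is misleading: ``positivity of $1-(A\sharp B)^2$ modulo compacts'' is not the positivity condition of Kucerovsky's criterion. Kucerovsky's condition is the semi-boundedness $\langle D_1\otimes 1\,\xi, D\xi\rangle+\langle D\xi, D_1\otimes 1\,\xi\rangle\geq\kappa\langle\xi,\xi\rangle$, which, as noted, is automatic here. What you are actually describing, positivity/compactness of $1-Q(A\sharp B)^2$ against $\pi(\varphi)$, is the prior requirement that $(\mathcal{E}_1\rtimes G\otimes_{\pi_G}\mathcal{E}_2,\,\pi_H^{\rtimes G}\otimes 1,\,Q(\tilde B\otimes 1+1\otimes A))$ is a Kasparov cycle at all; this is where the combined $G\times H$-transversal ellipticity of the sharp product symbol enters, and it is a separate verification that Kucerovsky's theorem presupposes. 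You should disentangle these two steps. Also, the explicit unitary $U(s\otimes s')=\int_G s(t)\otimes t\cdot s'\,dt$ from $\mathcal{E}_1\rtimes G\otimes_{\pi_G}\mathcal{E}_2$ to $\mathcal{E}_1\otimes\mathcal{E}_2$, together with the verification that it intertwines $\pi_H^{\rtimes G}\otimes 1$ with $\pi_{G\times H}$ and $\tilde B\otimes 1+1\otimes A$ with $B\otimes 1+1\otimes A$, is a substantial computational piece of the paper's proof that your sketch compresses into a phrase; at minimum you should flag that this identification is not formal and needs the change-of-variables calculations spelled out in the paper.
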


\begin{proof}~\\
\hspace*{0.8cm}1. Recall that if $P_0$ is a family of pseudodifferential operators of positive order then we denote by $\hat{P}$ the family $\begin{pmatrix}
0&P_0^*\\P_0&0
\end{pmatrix}$ and by $P$ the closure of $\hat{P}$.\\
Let $A_0 : C^{\infty,0}(M,E^+_2) \rightarrow C^{\infty,0}(M,E^-_2)$ be a $G$-invariant family of $G$-transversally elliptic operators of order $1$ associated to  $a $ and $(\mathcal{E}_2,\pi_G ,A)$ the $(C^*G,~C(B'))$ unbounded Kasparov cycle associated (see Definition \ref{def:indice:NB}). Let $B_0 : C^{\infty,0}(M,E^+_1) \rightarrow C^{\infty,0}(M,E^-_1)$ be a $G\times H$-invariant family of $H$-transversally elliptic operators of order $1$ associated to $b$ and $(\mathcal{E}_1,\pi_H ,B)$ the associated $(C^*H,~C(B))$ unbounded Kasparov cycle.\\ 
Denote by $Q(T)=T(1+T^2)^{-1/2}$ the Woronowicz transformation of a selfadjoint regular operator $T$. 
Recall that the index classes associated respectively to $(\mathcal{E}_1,\pi_G ,A)$ and $(\mathcal{E}_2,\pi_H ,B)$ are respectively $\mathrm{Ind}^{\mathrm{M'|B}'}(A)=[(\mathcal{E}_2, \pi_G,Q(A))]$ and $\indm(b)=[(\mathcal{E}_2, \pi_H,Q(B))]$. Notice that the operator $1\underset{\pi_G}{\otimes}A$ is well defined since $A$ commutes with $\pi_G$. We will show that the cycle $(\mathcal{E}_1\rtimes G \underset{\pi_G}{\otimes}\mathcal{E}_2 , \pi_H^{\rtimes G} \underset{\pi_G}{\otimes} 1 ,Q(\tilde{B}\underset{\pi_G}{\otimes}1+1\underset{\pi_G}{\otimes}A))$ is the Kasparov product of $j^G\indm(B)$ and $ \mathrm{Ind}^{\mathrm{M'|B'}}(A)$.\\
Notice that $B\otimes 1 + 1\otimes A$ is the operator used to define the index class as in Definition \ref{def:indice:NB} because $\hat{B}\otimes 1 +1 \otimes \hat{A}$ is essentially selfadjoint.
We have to check that $\mathrm{Ind}^{\mathrm{M\times M'|B\times B'}}_{G} (A\sharp B)=[(\mathcal{E}_1\otimes \mathcal{E}_2, \pi_{G\times H} , B \otimes 1 + 1\otimes A)]$ is given by $[(\mathcal{E}_1\rtimes G \underset{\pi_G}{\otimes}\mathcal{E}_2 , \pi_H\rtimes G \underset{\pi_G}{\otimes} 1 ,Q(\tilde{B}\underset{\pi_G}{\otimes}1+1\underset{\pi_G}{\otimes}A))]$.

Let $U : C^{\infty}(G,C^{\infty,0}(M ,E_1))\otimes C^{\infty ,0}(M',E_2) \rightarrow C^{\infty, 0}(M\times M' ,E_1\boxtimes E_2 )$ be the map defined by $U(s\otimes s')=\int_G s(g)\otimes g\cdot s' dg$. 
We easily check that $U(s \cdot \varphi \otimes s' )=U(s\otimes \pi_G(\varphi )s')$, $\forall s\in C^{\infty}(G, C^{\infty , 0}(M,E_1))$, $\varphi \in C^{\infty}(G)$ and $s'\in C^{\infty ,0}(M',E_2)$, so $U$ induces a map still denoted by $U$ from $C^{\infty}(G,C^{\infty,0}(M ,E_1))\underset{\pi_G}{\otimes} C^{\infty ,0}(M',E_2)$ to $ C^{\infty, 0}(M\times M' ,E_1\boxtimes E_2 ) $. 
This map extends to a unitary isomorphism from $\mathcal{E}_1\rtimes G\underset{\pi_G}{\otimes}\mathcal{E}_2$ to $\mathcal{E}_1\otimes \mathcal{E}_2$. Indeed, $U(C^{\infty}(G,C^{\infty,0}(M ,E_1))\underset{\pi_G}{\otimes} C^{\infty ,0}(M',E_2))$ is dense in $\mathcal{E}_1\otimes \mathcal{E}_2$ and we have the equality 
$\displaystyle \langle U(s_1\underset{\pi_G}{\otimes}s_2),U(s^{'}_1\underset{\pi_G}{\otimes}s^{'}_2)\rangle  _{\mathcal{E}_1\otimes \mathcal{E}_2}
=\langle s_1\underset{\pi_G}{\otimes}s_2,s^{'}_1\underset{\pi_G}{\otimes}s^{'}_2\rangle_{\mathcal{E}_1\rtimes G\underset{\pi_G}{\otimes} \mathcal{E}_2}$,
 $\forall s_1$, $s_1^{'}\in \mathcal{E}_1\rtimes G$ and $s_2$, $s_2^{'}\in \mathcal{E}_2$. Let us check this equality. We have:\\
$\begin{array}{lll}
\langle U(s_1\underset{\pi_G}{\otimes}s_2),U(s^{'}_1\underset{\pi_G}{\otimes}s^{'}_2)\rangle_{\mathcal{E}_1\otimes \mathcal{E}_2}&=\displaystyle \int_G\int_G \langle s_1(h)\otimes h\cdot s_2, s_1^{'}(k)\otimes k\cdot s_2^{'}\rangle_{\mathcal{E}_1\otimes \mathcal{E}_2}dhdk\\
 &=\displaystyle\int_G\int_G\langle s_1(h),s_1^{'}(k)\rangle_{\mathcal{E}_1}\langle s_2,h^{-1}k\cdot s_2^{'}\rangle_{\mathcal{E}_2} dhdk.
\end{array}$\\
By substituting $g=h^{-1}k$, we get:\\
$\begin{array}{lll}
\displaystyle \langle U(s_1\underset{\pi_G}{\otimes}s_2),U(s^{'}_1\underset{\pi_G}{\otimes}s^{'}_2)\rangle_{\mathcal{E}_1\otimes \mathcal{E}_2}&= \displaystyle\int_G\displaystyle\int_G\langle s_1(h),s^{'}_1(hg)\rangle_{\mathcal{E}_1}\langle s_2,g\cdot s_2^{'}\rangle_{\mathcal{E}_2}dhdg\\
&=\displaystyle\int_G\int_G\langle s_2,\langle s_1(h),s_1^{'}(hg)\rangle_{\mathcal{E}_1} g\cdot s_2^{'}\rangle_{\mathcal{E}_2}dhdg\\
 &=\langle s_1\underset{\pi_G}{\otimes}s_2,s^{'}_1\underset{\pi_G}{\otimes}s^{'}_2\rangle_{\mathcal{E}_1\rtimes G\underset{\pi_G}{\otimes} \mathcal{E}_2}.
\end{array}$\\
It thus remains to check that $U$ intertwines representations and operators. Let $\psi \in C^{\infty}(H)$ and $\varphi \in C^{\infty}(G)$, $s_1\in C^{\infty}(G,C^{\infty ,0}(M,E_1))$ and $s_2\in C^{\infty , 0}(M',E_2)$. We have:\\
$\begin{array}{lll}
 \pi_{H\times G}(\psi \otimes \phi)\big(U(s_1\otimes s_2)\big)\hspace*{-0.3cm}&=\displaystyle\pi_{H\times G}(\psi \otimes \varphi )\big(\int_G s_1(t)\otimes t\cdot s_2 dt\big)\\
 &=\displaystyle \int_{H\times G}\int_{G} \psi(h)\varphi(g)(h,g)\cdot (s_1(t))\otimes (gt)\cdot s_2 dtdhdg.
\end{array}$\\
Setting $gt=u$, we obtain:\\
$\begin{array}{lll}
 \pi_{H\times G}(\psi \otimes \phi)\big(U(s_1\otimes s_2)\big)\hspace*{-0.3cm}&=\displaystyle \int_{H\times G}\int_{G} \psi(h)\varphi(g)(h,g)\cdot (s_1(g^{-1}u))\otimes u\cdot s_2 dudhdg\\
 &=\displaystyle \int_{G}\int_{G} \pi_H(\psi \otimes \varphi(g))g\cdot (s_1(g^{-1}u))\otimes u\cdot s_2 dudg\\
 &=\displaystyle \int_{G}( \pi_H^{\rtimes G}(\psi \otimes \varphi) (s_1))(u)\otimes u\cdot s_2 du\\
 &=U(\pi_H^{\rtimes G}(\psi \otimes \varphi ) \underset{\pi_G}{\otimes} 1(s_1\underset{\pi_G}{\otimes} s_2)),
\end{array}$\\ 
so $U$ intertwines representations.
Let us now compute $(B\otimes 1+ 1\otimes A)(U(s_1 \underset{\pi_G}{\otimes} s_2))$. We have:
$$\begin{array}{lll}
(B\otimes 1+ 1\otimes A)(U(s_1 \underset{\pi_G}{\otimes} s_2))&=(B\otimes 1+ 1\otimes A)(\int_Gs_1(t)\otimes t\cdot s_2 dt)\\
&= \int_G B(s_1(t))\otimes t\cdot s_2 +s_1(t)\otimes t \cdot A(s_2) dt\\
&= \int_G(\tilde{B}s_1)(t)\otimes t\cdot s_2 +s_1(t)\otimes t \cdot A(s_2) dt\\
&=U((\tilde{B}\underset{\pi_G}{\otimes}1+1\underset{\pi_G}{\otimes} A)(s_1\underset{\pi_G}{\otimes} s_2)),
\end{array}$$ 
so that $U$ intertwines operators as well.\\
We now show that the operator $Q(\tilde{B}\underset{\pi_G}{\otimes}1+1\underset{\pi_G}{\otimes}A)$ is a $Q(A)$-connection on $\mathcal{E}_1\rtimes G \underset{\pi_G}{\otimes} \mathcal{E}_2$. Let us write $$Q(\tilde{B}\underset{\pi_G}{\otimes}1+1\underset{\pi_G}{\otimes}A)=M^{1/2} Q(\tilde{B})\underset{\pi_G}{\otimes} 1+N^{1/2}1\underset{\pi_G}{\otimes} Q(A),$$
where $M=\dfrac{1+\tilde{B}^2\underset{\pi_G}{\otimes}1}{1+\tilde{B}^2\underset{\pi_G}{\otimes}1+1\underset{\pi_G}{\otimes}A^2}$ and $N=\dfrac{1+1\underset{\pi_G}{\otimes}\tilde{A}^2}{1+\tilde{B}^2\underset{\pi_G}{\otimes}1+1\underset{\pi_G}{\otimes}A^2}$. Notice that the operators $M$ and $N$ are selfadjoint and bounded. Moreover $M$ and $N$ commute and we have $M+N=1+ (1+\tilde{B}^2\underset{\pi_G}{\otimes}1+1\underset{\pi_G}{\otimes}A^2)^{-1}$. To show that the operator $Q(\tilde{B}\underset{\pi_G}{\otimes}1+1\underset{\pi_G}{\otimes}A)$ is a $Q(A)$-connection on $\mathcal{E}_1\rtimes G \underset{\pi_G}{\otimes} \mathcal{E}_2$ it is sufficient to show that $M$ is a $0$-connection and that $N$ is a $1$-connexion. We obtain then by a classical result on connections that $M^{1/2}$ is a $0$-connection and that $N^{1/2}$ is a $1$-connection, so that $Q(\tilde{B}\underset{\pi_G}{\otimes}1+1\underset{\pi_G}{\otimes}A)$ is a $Q(A)$-connection since $1\underset{\pi_G}{\otimes} Q(A)$ is a $Q(A)$-connection.\\
Let us check that $M$ is a $0$-connection. Let $\varphi \in C^{\infty}(G)$ and $s\in C^{\infty,0}(M,E_1)$, we have:
$$\begin{array}{lll}
M\circ T_{s\otimes \varphi}&= \dfrac{1+B^2\underset{\pi_G}{\otimes}1}{1+B^2\underset{\pi_G}{\otimes}1+1\underset{\pi_G}{\otimes}A^2}\circ T_s \circ \pi_G(\varphi)\\
&=\dfrac{1+1\underset{\pi_G}{\otimes}A}{1+B^2\underset{\pi_G}{\otimes}1+1\underset{\pi_G}{\otimes}A^2}\circ T_{(1+B^2)s} \circ (1+A^2)^{-1}\circ \pi_G(\varphi)\\
&=N\circ T_{(1+B^2)s} \circ (1+A^2)^{-1}\circ \pi_G(\varphi).
\end{array}$$
As $(1+A^2)^{-1}\circ \pi_G(\varphi)$ is compact, we deduce that $M\circ T_{s\otimes \varphi}$ is compact because $N\in \mathcal{L}(\mathcal{E}_1\rtimes G\underset{\pi_G}{\otimes}\mathcal{E}_2)$ and $T_{(1+B^2)s}\in \mathcal{L}(\mathcal{E}_2,\mathcal{E}_1\rtimes G\underset{\pi_G}{\otimes}\mathcal{E}_2)$. Moreover, the map which associates to $s\in \mathcal{E}_1\rtimes G$ the operator $M\circ T_s$ is continuous so we obtain that $\forall s\in \mathcal{E}_1\rtimes G$, $M\circ T_s$ is compact which shows that $M$ is a $0$-connection. To show that $N$ is a $1$-connection it is sufficient to check that $(1+B^2\underset{\pi_G}{\otimes}1+1\underset{\pi_G}{\otimes}A^2)^{-1}$ is a $0$-connection since $M+N=1+ (1+B^2\underset{\pi_G}{\otimes}1+1\underset{\pi_G}{\otimes}A^2)^{-1}$. This is obtained in a way similar to the previous computation, since $(1+B^2\underset{\pi_G}{\otimes}1+1\underset{\pi_G}{\otimes}A^2)^{-1}=N\circ (1+1\underset{\pi_G}{\otimes}A^2)^{-1}$.\\ 
It remains finally to check the positivity condition. But we have:
$$[Q(B\underset{\pi_G}{\otimes} 1+ 1\underset{\pi_G}{\otimes} A),Q(B)\underset{\pi_G}{\otimes}1]=2MQ(B)^2\underset{\pi_G}{\otimes}1$$ so $\pi_H^{\rtimes G}(\theta) \underset{\pi_G}{\otimes} 1[Q(B\underset{\pi_G}{\otimes} 1+ 1\underset{\pi_G}{\otimes} A),Q(B)\underset{\pi_G}{\otimes}1]\pi_{H}^{\rtimes G}(\theta)\underset{\pi_G}{\otimes}1$ is positive modulo $\mathcal{K}(\mathcal{E}_1\rtimes G\underset{\pi_G}{\otimes}\mathcal{E}_2)$.\\
\hspace*{0.8cm} 2. For the last equality, the product of $\mathrm{Ind}^{\mathrm{M\times M'|B}} (a\sharp b)$ with $[\Delta_B]$ on the right allows to see the Hilbert $C(B\times B)$-module $\mathcal{E}_1\otimes \mathcal{E}_2$ as an Hilbert $C(B)$-module. 
\end{proof}

\subsection{Excision property}

In this section, we show an excision property which allows to compute the index of a family of $G$-transversally elliptic operators on an open subset of $M$ as the index of a family of $G$-transversally elliptic operators on the compact fibration $p : M\rightarrow B$.\\ 
We start with a lemma from \cite{Connes:Skandalis:longIndThmFoliations} which contains the ideas used in the proof of the excision theorem.

\begin{lem}$\cite{Connes:Skandalis:longIndThmFoliations}$\label{lem:support}
Let $(\mathcal{E},\pi ,F)$ be an $(A,B)$ Kasparov bimodule. Let $\mathcal{K}_1$ be the $C^{*}$-subalgebra of $\mathcal{K}(\mathcal{E})$ generated by $[\pi(a),F]$ , $\pi(a)(F^2-1)$ and $\pi(a)(F-F^*)$ for $a\in A$ and the multiples by $A$, $F$ and $F^*$. Let $\mathcal{E}_1$ be the closed $D$-submodule of $\mathcal{E}$ generated by $\mathcal{K}_1\mathcal{E}$. This module $\mathcal{E}_1$ is called the support of $(\mathcal{E},\pi , F)$, it is stable under the actions of $A$ and $F$. Let $F_1$ be the restriction of $F$ to $\mathcal{E}_1$. Then the classes of $(\mathcal{E}_1,\pi,F_1)$ and $(\mathcal{E},\pi ,F)$ coincide in $\k\k(A,B)$. 
\end{lem}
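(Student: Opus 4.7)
The strategy is to split the Kasparov cycle $(\mathcal{E},\pi,F)$ as an orthogonal sum of its restriction to $\mathcal{E}_1$ and a degenerate piece on the orthogonal complement $\mathcal{E}_1^{\perp}$, the degenerate piece contributing trivially in $\k\k(A,B)$.

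First I would check stability. By the very definition of $\mathcal{K}_1$, this $C^{*}$-algebra is closed under left multiplication by every $\pi(a)$, by $F$ and by $F^{*}$, so $\pi(a)\mathcal{K}_1\mathcal{E}\subset\mathcal{K}_1\mathcal{E}$ and similarly for $F$ and $F^{*}$. Passing to the closure shows that $\mathcal{E}_1$ is stable under $\pi(A)$, $F$ and $F^{*}$. Since $\pi$ is a $*$-homomorphism, this implies by taking adjoints that the orthogonal complement $\mathcal{E}_1^{\perp}$ is also stable under $\pi(A)$, $F$ and $F^{*}$.

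Next I would establish the orthogonal decomposition $\mathcal{E}=\mathcal{E}_1\oplus\mathcal{E}_1^{\perp}$. Because $\mathcal{K}_1$ is a $C^{*}$-subalgebra of the compact operators $k(\mathcal{E})$, it admits a positive approximate unit $(u_{\lambda})$, and one uses this together with the compact structure to produce a projection $P\in\mathcal{L}(\mathcal{E})$ with range $\mathcal{E}_1$: any rank-one operator $\theta_{k\xi,\eta}=k\,\theta_{\xi,\eta}$ with $k\in\mathcal{K}_1$ has range in $\mathcal{E}_1$, and the strong limit $P\xi=\lim_{\lambda}u_{\lambda}\xi$ furnishes the required projection. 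This is the most delicate step in the proof.

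The core content of the lemma then falls out: the triple $(\mathcal{E}_1^{\perp},\pi|_{\mathcal{E}_1^{\perp}},F|_{\mathcal{E}_1^{\perp}})$ is a \emph{degenerate} Kasparov cycle. Indeed, for $\xi\in\mathcal{E}_1^{\perp}$ and $a\in A$, each of the elements $[\pi(a),F]\xi$, $\pi(a)(F^{2}-1)\xi$ and $\pi(a)(F-F^{*})\xi$ lies in $\mathcal{E}_1$ because the three operators themselves belong to $\mathcal{K}_1$, while by the stability established above these elements simultaneously lie in $\mathcal{E}_1^{\perp}$. They therefore lie in $\mathcal{E}_1\cap\mathcal{E}_1^{\perp}=\{0\}$ and vanish identically, not just up to compact operators, so the cycle on $\mathcal{E}_1^{\perp}$ is degenerate and has zero class in $\k\k(A,B)$.

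Finally, $(\mathcal{E}_1,\pi|_{\mathcal{E}_1},F_1)$ is itself a Kasparov cycle: the relevant commutators and defects belong to $\mathcal{K}_1\subset k(\mathcal{E})$, preserve $\mathcal{E}_1$, and the restriction along a complemented preserved submodule of an operator in $k(\mathcal{E})$ remains in $k(\mathcal{E}_1)$. The orthogonal sum decomposition gives $[\mathcal{E},\pi,F]=[\mathcal{E}_1,\pi,F_1]$ in $\k\k(A,B)$. The main obstacle is the orthogonal complementability of $\mathcal{E}_1$ in $\mathcal{E}$: a closed submodule of a Hilbert $C^{*}$-module is not complemented in general, and one must genuinely exploit that $\mathcal{K}_1$ consists of compact operators equipped with a well-behaved approximate unit in order to produce the projection $P$.
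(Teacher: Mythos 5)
Your proposal hinges on an orthogonal decomposition $\mathcal{E}=\mathcal{E}_1\oplus\mathcal{E}_1^{\perp}$, and you correctly flag the complementability of $\mathcal{E}_1$ as the delicate point; unfortunately that step is not just delicate, it is false in general, so the proof has a genuine gap. The compactness of $\mathcal{K}_1$ does not rescue it. The approximate unit $(u_\lambda)$ is only known to converge strongly on the dense subspace $\mathcal{K}_1\mathcal{E}$ (and hence on its closure $\mathcal{E}_1$, where the limit is the identity), and to vanish on $\mathcal{E}_1^{\perp}$; for $\xi\notin\mathcal{E}_1\oplus\mathcal{E}_1^{\perp}$ there is no reason for $(u_\lambda\xi)$ to converge, and if it always converged one would have precisely proved that $\mathcal{E}_1$ is complemented, which is circular. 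A concrete obstruction: let $\mathcal{E}=C([0,1])^{2}$ over $B=C([0,1])$ with the even/odd grading, $A=\mathbb{C}$ acting by scalars, and $F=\begin{pmatrix}0&f\\ f&0\end{pmatrix}$ with $f$ continuous, $f\equiv 1$ on $[0,1/2]$ and $f$ strictly decreasing to $0$ on $[1/2,1]$. This is a legitimate Kasparov $(\mathbb{C},C([0,1]))$-cycle, and one checks that $\mathcal{K}_1$ consists of matrices with entries vanishing on $[0,1/2]$, so $\mathcal{E}_1$ is the submodule of sections vanishing on $[0,1/2]$. Its orthogonal complement consists of sections vanishing on $[1/2,1]$, and the sum of the two misses any section that is nonzero at $t=1/2$; thus $\mathcal{E}_1$ is not complemented, and the net $(u_\lambda\xi)$ indeed fails to converge for, say, the constant section.

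The paper avoids this entirely by not decomposing $\mathcal{E}$ at all: it builds a homotopy. One takes the $C(B)\otimes C([0,1])$-module
$\tilde{\mathcal{E}}=\{e\in \mathcal{E}\otimes C([0,1])\ :\ e(1)\in\mathcal{E}_1\}$,
with $\tilde F$ and $\tilde\pi$ acting pointwise in $t$. Because the defining operators of $\mathcal{K}_1$ preserve $\mathcal{E}_1$ and are compact, $(\tilde{\mathcal{E}},\tilde\pi,\tilde F)$ is a genuine Kasparov $(A, B\otimes C([0,1]))$-cycle; evaluating at $t=0$ gives $(\mathcal{E},\pi,F)$ and at $t=1$ gives $(\mathcal{E}_1,\pi,F_1)$, so the two classes agree by the homotopy axiom. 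This path-of-modules trick is precisely what replaces the unavailable projection onto $\mathcal{E}_1$. If you want to keep the ``degenerate complement'' intuition, you would first have to reduce to a situation where $\mathcal{E}_1$ is complemented (e.g.\ after a Kasparov stabilisation), but the Connes--Skandalis homotopy is the cleaner route.
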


Let $G$ be a compact Lie group. Let $p : M \rightarrow B$ be $G$-equivariant compact fibration and assume that the action of $G$ on $B$ is trivial. We will use the following lemma shown in \cite[lemma 3.6]{atiyah1974elliptic} in the case where $B$ is the point, see also \cite{Atiyah-Singer:I}. The proof being identical, it is omitted. Let $U$ be a $G$-invariant open subspace of a compact fibration $p : M\rightarrow B$. Denote by $j : U \hookrightarrow M$ the inclusion. Let $\pi : T^VU \rightarrow U$ be the projection of the vertical tangent bundle defined by $T^VU = \ker ((p\circ j)_*) $, $T^VU$ is an open subspace of $T^VM$. We denote by $T^V_GU = T^VU\cap T_GM$.

\begin{lem}\label{lem:symbole:T^VU}
Each element $a\in \K(T^V_GU)$ can be represented by a homogeneous complex on $T^VU$ of degree zero
$$\xymatrix{0\ar[r] &\pi^*E^0 \ar[r]^a &\pi^*E^1 \ar[r]&0
}$$
such that, outside a compact set in $U$, the bundles $E^0$ and $E^1$ are trivial and $a$ is the identity.
\end{lem}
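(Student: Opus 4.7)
The plan is to follow the classical Atiyah-Singer argument of \cite{Atiyah-Singer:I} and \cite[Lemma 3.6]{atiyah1974elliptic}, adapted in a routine way to the $G$-equivariant family setting. The starting point is the standard description of $G$-equivariant compactly supported $K$-theory: any class $a \in \K(T^V_GU)$ is represented by a $G$-equivariant morphism $\sigma : \pi^*E^0 \to \pi^*E^1$ between pull-backs of $G$-vector bundles, where $\sigma$ is an isomorphism on $T^V_GU$ outside a compact subset $K$. Since the zero section $U \hookrightarrow T^VU$ is a $G$-equivariant deformation retract, we may assume $E^0, E^1$ are $G$-vector bundles on $U$. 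Extending $\sigma$ from $T^V_GU$ to all of $T^VU$ by an arbitrary $G$-equivariant lift (no invertibility is required off $T^V_GU$), we obtain a complex on $T^VU$ whose support projects to a compact set $K_0 := p(K) \subset U$.

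Next I would arrange the complex so that $E^0 = E^1$ and $\sigma = \mathrm{id}$ outside a slightly larger compact set $K_0' \supset K_0$. The key observation is that for $x \in U \setminus K_0$ the point $(x, 0)$ lies in $T^V_GU \setminus K$, hence $\sigma(x, 0) : E^0_x \to E^1_x$ is a $G$-equivariant isomorphism, giving a bundle isomorphism $\phi : E^0|_{U \setminus K_0} \to E^1|_{U \setminus K_0}$. Using $\phi$ together with a $G$-invariant cut-off function that vanishes on a neighborhood of $K_0$ and equals $1$ outside $K_0'$, one performs a straight-line homotopy within $G$-transversally elliptic symbols to reduce to the case $E^0 = E^1 =: E$ and $\sigma = \mathrm{id}_E$ on $T^V(U \setminus K_0')$.

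Finally, I would trivialize $E$ outside a compact subset by invoking the equivariant Atiyah-Segal stabilization theorem. Over a compact $G$-invariant neighborhood $\bar{L} \supset K_0'$ in $U$, there exist a $G$-vector bundle $F$ and a finite-dimensional $G$-representation $V$ with $E|_{\bar{L}} \oplus F \cong \underline{V}|_{\bar{L}}$. Extending $F$ coherently across $U$ (via averaging over $G$ and the fact that $\bar{L}$ is a retract of a tubular neighborhood) and replacing $(\pi^*E^0, \pi^*E^1, \sigma)$ by $(\pi^*(E^0 \oplus F), \pi^*(E^1 \oplus F), \sigma \oplus \mathrm{id}_F)$ leaves the $K$-theory class unchanged and yields a representative for which, outside $\bar{L}$, both bundles are canonically $\underline{V}$ and $\sigma$ is the identity.

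The main technical obstacle lies in the last step, namely arranging the equivariant extension of $F$ together with the trivialization $E \oplus F \cong \underline{V}$ to be compatible on $U \setminus \bar{L}$ with the identification produced in the second step. This is handled by a standard averaging argument over the compact Lie group $G$, exactly as in the case where the base $B$ reduces to a point, treated in \cite[Lemma 3.6]{atiyah1974elliptic}; the parametrization by $B$ introduces no new difficulty since all the operations (cut-off, homotopy, stabilization, extension of $F$) can be performed fiberwise and are continuous in $b \in B$.
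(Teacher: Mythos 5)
The paper omits the proof entirely, appealing to Atiyah's Lemma~3.6 in \cite{atiyah1974elliptic} and declaring the family case ``identical''; your outline reproduces that Atiyah--Singer argument, so your overall strategy matches the paper's intent. Steps one and two are fine: the restriction to the zero section furnishes the isomorphism $\phi$ off a compact $K_0\subset U$, and a cut-off homotopy makes $\sigma$ equal to $\phi$ (constant in the fibre variable) outside a slightly larger compact.

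The third step as written has a genuine gap. You want to form $(\pi^*(E^0\oplus F),\pi^*(E^1\oplus F),\sigma\oplus\mathrm{id}_F)$ over all of $U$, but $F$ is produced by the equivariant Swan/Atiyah--Segal theorem only over the compact $\bar L$, and there is no general mechanism for extending a $G$-bundle from a compact subset to an arbitrary open $G$-manifold; ``averaging over $G$'' and ``$\bar L$ is a retract of a tubular neighbourhood'' do not supply one. The classical argument avoids this altogether by cutting and pasting rather than extending: after step two the $K$-theory class is supported in the interior of $\bar L$, so one stabilizes the complex \emph{over} $\bar L$ by $(F,F,\mathrm{id}_F)$, notes that $E^0\oplus F\cong\underline V$ over $\bar L$ and hence $E^1\oplus F\cong\underline V$ on a collar of $\partial\bar L$ via $\phi\oplus\mathrm{id}_F$, and then glues the constant complex $(\underline V,\underline V,\mathrm{id})$ on $U\setminus K_0'$ to the stabilized complex on $\bar L$ along this collar trivialization. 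No extension of $F$ is required, and the result is the desired representative with trivial bundles and identity morphism outside a compact. Your closing remark that the $B$-parameter is harmless and everything is done fibrewise is correct once step three is repaired in this way.
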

\noindent
We can now state our theorem.

\begin{thm}\label{thm:excision}
Let $j : U \hookrightarrow M$ be an open $G$-embedding. We assume that $p : M\rightarrow B$ is a compact $G$-fibration and that the action of $G$ is trivial on $B$. Then the composition 
$$\xymatrix{\K (T^{V}_G U) \ar[r]^{j_{*}} &\K (T^{V}_G M) \ar[rr]^{\ind }& &\k\k(C^*G,C(B))}$$
does not depend on $j$. Said differently, if $j' : U \hookrightarrow M'$ is another $G$-embedding with $M'\rightarrow B$ a compact $G$-fibration then 
$$\mathrm{Ind}^{\mathrm{M'|B}}\circ j_*'=\mathrm{Ind}^{\mathrm{M|B}} \circ j_*.$$
\end{thm}

\begin{proof}
Let $a \in \K (T^{V}_GU)$. We denote by $\pi^V : T^VU \rightarrow U$ the projection. We start by representing $a$ by a symbol of order $0$ on $T^VU$ 
$$\xymatrix{0\ar[r] &\pi^{V*}E^+ \ar[r]^{\sigma} &\pi^{V*}E^- \ar[r] &0}, $$ trivial outside a compact set $K$ of $U$, by using Lemma \ref{lem:symbole:T^VU}. We denote by $E=E^+\oplus E^-$. The inclusion $j$ defines a map $j_*$ in equivariant $\k$-theory:
$$j_* : \K(T^V_GU) \rightarrow \K(T^V_GM).$$
The map $j_*$ associates to $[\sigma ]$ the class of $\sigma$ trivially extended outside $U$ on $M$ using the trivializations. \\
Denote by $j_*E$ the vector bundle $E$ trivially extended outside $U$, and by $\mathcal{E}=\overline{C^{\infty ,0}(M,j_*E)}^{C(B)}$. We choose a family $P_0$ of pseudodifferential operators associated to $\sigma $ on $U$ such that $P_0$ is the identity outside the support $L$ of $\sigma$, that is if $s\in C^{\infty , 0}_c(U\setminus L,E^+)$ then $P_0s \in C_c^{\infty ,0}(U\setminus L,E^-)$ and it is given by
$$P_0(s)(x)=(\psi^-_x)^{-1}\circ \psi^+_x)(s(x)) ,$$
where $\psi^\pm$ are the trivializations of $E^\pm$ outside $L$. Denote by $P$ the operator $\begin{pmatrix}
0&P_0^*\\
P_0& 0
\end{pmatrix}$.
Let $\theta \in C_c^{\infty ,0}(U,[0,1])$ be a function equal to $1$ on $L$ with support $L'$. With the help of such a function $\theta$, we extend by the identity the family $P$ on $M$, and we denote by $j_*P$ this operator on $M$.  The symbol of this family $j_*P$ is $j_*\sigma $ and it is $G$-transversally elliptic along the fibers of $p : M\rightarrow B$. Let $\mathcal{E}$ be the completion of $C^{\infty ,0}(M, j^*E)$ as a $C(B)$-module defined as in section \ref{C(B):module}. We then obtain an index class $\ind(j_*P)=[\mathcal{E},\pi , j_*P]$. The map $j$ allows to see $C_c^{\infty ,0}(U,E)$ as a submodule of $\mathcal{E}$. Denote by $\mathcal{E}_{U\subset M}$ the completion of $ C_c^{\infty ,0}(U,E)$ in $\mathcal{E}$. The operator $j_*P$ preserves the closed submodule $\mathcal{E}_{U\subset M}$. Indeed, if $s\in C^{\infty ,0}_c(U,E)$ then $j_*P(s)=P(s) \in C^{\infty ,0}_c(U,E)$. Moreover, the representation $\pi$ of $C^*G$ preserves $\mathcal{E}_{U\subset M}$. Indeed, $\pi$ preserves $C_c^{\infty ,0}(U,E)$ since $U$ is a $G$-invariant open subspace in $M$, so also $\mathcal{E}_{U\subset M}$. We then get a well defined class $[\mathcal{E}_{U\subset M},\pi ,j_*P_{|_{\mathcal{E}_{U\subset M}}}]$ in $ \k\k(C^*G ,C(B))$. The cycle thus obtained is also a representative for the index class $\ind(j_*P)$ of $j_*P$. Indeed, by Lemma \ref{lem:support} above, it is sufficient to show that the two cycles have the same support, since then the operators will coincide as we will see. But if $\chi \in C^{\infty , 0}_c(U,[0,1])$ is a function which is equal to $1$ on the support of $\theta$ then $((j_*P)^2-id)(s)=((j_*P)^2-id)(\chi s)$ and hence the equality of the supports.\\
Now let $U'$ be a $G$-invariant open subspace of a compact fibration $p' : M' \rightarrow B$ which is diffeomorphic to $U$, denote by $j' : U' \hookrightarrow M'$ the inclusion. Denote by $f : U \rightarrow  U'$ a given diffeomorphism, and assume that $p\circ j=(p\circ j') \circ f$. The class of the element $a \in \K(T^V_GU)$ seen as an element of $\K(T^V_GU')$ using $f$ can be represented by $(f^{-1})^*\sigma$. We associate to $(f^{-1})^*\sigma $ the operator $(f^{-1})^*Pf^*$ that we extend by the identity on $M'$ to an operator $j^{'}_*P$. We thus get (as for $U$) an index class $\mathrm{Ind}^{\mathrm{M'|B}}(j^{'}_{*}P)=[\mathcal{E}^{'},\pi^{'} , j^{'}_{*}P]$ by taking the completion of $C^{\infty , 0}(M',j^{'}_*(f^{-1})^*E)$. We define a closed submodule $\mathcal{E}^{'}_{U'\subset M'}$ in the same way as for $\mathcal{E}_{U\subset M}$. We obtain that $\mathrm{Ind}^{\mathrm{M'|B}}(j^{'}_*P)=[\mathcal{E}^{'}_{U'\subset M'},\pi^{'} ,j^{'}_*P_{|_{\mathcal{E}_{U^\prime\subset M'}^{\prime}}}]$. But the cycle $(\mathcal{E}^{'}_{U'\subset M'},\pi^{'} ,j^{'}_*P_{|_{\mathcal{E}^{'}_{U'\subset M'}}})$ is then unitary equivalent to the cycle $(\mathcal{E}_{U\subset M},\pi ,j_*P_{|_{\mathcal{E}_{U\subset M}}})$. Indeed, if $(\mu_b)_{b\in B}$ and $(\mu^{'}_b)_{b\in B}$ are respectively the family of measures of Lebesgue type on respectively $M$ and $M'$ used in the definition of the previous index classes then we denote by $h=(h_b)_{b\in B}$ the continuous family of Radon-Nikodym derivatives associated on $U$ identified to $U'$ using $f$. We then get a unitary $V : \mathcal{E}_{U\subset M} \rightarrow \mathcal{E}_{U'\subset M'}$ given by $s  \rightarrow \sqrt{h}(f^{-1})^*s $. Furthermore, $V^{-1} (f^{-1})^*Pf^* V=\dfrac{1}{\sqrt{f^*h}}P\sqrt{f^*h}$ and $ V^{-1} \pi^{'}(\varphi ) V=\dfrac{1}{\sqrt{f^*h}}\pi(\varphi )\sqrt{f^*h}$ which give a unitarily equivalent cycle of $(\mathcal{E}_{U\subset M},\pi , j_*P_{|_{\mathcal{E}_{U\subset M}}})$. So the index class does not depend on the embedding $j$ chosen.
\end{proof}

\begin{remarque}
The equivalence between the two following cycles $(\mathcal{E},\pi ,j_*P)$ and $(\mathcal{E}_{U\subset M},\pi ,j_*P_{|_{\mathcal{E}_{U\subset M}}})$ can be deduced immediately by rewriting on the proof of Lemma \ref{lem:support}, see appendix $A$ of \cite{Connes:Skandalis:longIndThmFoliations}.
\end{remarque}

\subsection{The naturality and induction theorems}\label{NaturalityInduction}
We recall here some constructions from \cite{atiyah1974elliptic}. Let $G$ be a compact Lie group. Let $H$ be a closed subgroup of $G$. Denote by $i : H \hookrightarrow G$ the inclusion. The group $G$ is a $G\times H$-manifold for the action given by $(g,h)\cdot g'=gg'h^{-1}$  $\forall g,$ $g'\in G$ and $h\in H$. Let $p : M\rightarrow B$ be a $G$-equivariant fibration. We still assume that the action of $G$ on $B$ is trivial. We have the following product \cite{atiyah1974elliptic} (see also the previous section):
$$\k_\mathrm{H}\big(T^V_HM\big)\otimes \k_{\mathrm{G\times H}}\big(T^*_GG \big) \rightarrow \k_{\mathrm{G\times H}}\big(T^V_{G\times H}(G\times M)\big). $$
As $H$ acts freely on $G\times M$, the quotient space $Y=G\times_H M$ is a fibration on $B$ and 
$$\k_{\mathrm{G\times H}}\big(T^V_{G\times H}(G\times M) \big) \cong \K\big(T^V_GY \big).$$
The operator $0 : C^{\infty}(G) \rightarrow 0$ is $G$-transversally elliptic and $T^*_GG=G\times \{0\}$. Furthermore, $[\sigma (0)]\in \k_{\mathrm{G\times H}}(T^*_GG)\cong R(H)$ is the class of the trivial bundle. 
So we can define a map $i_* : \k_\mathrm{H}(T^V_HM) \rightarrow \K(T^V_GY )$ by the composition of the product by $[\sigma (0)]$ with the isomorphism $\k_{\mathrm{G\times H}}\big(T^V_{G\times H}(G\times M) \big) \cong \K\big(T^V_GY \big)$.\\
Recall now the definition of the element $i^* \in \k\k(C^*G, C^*H)$ from \cite{julg1982induction}. Let us fix the Haar measures on $H$ and $G$. We consider on the space $C(G)$ of continuous functions on $G$ with values in $\mathbb{C}$ the right $L^1(H)$-module structure induced by the right action of  $H$ on $G$, i.e.
$$\varphi \cdot \psi (g)=\int_H \varphi(gh^{-1}) \psi(h) dh, ~\forall \varphi \in C(G)~\mathrm{and}\  \psi \in L^{1}(H)$$
and the following hermitian structure with values in $L^1(H)$:
$$\langle f_1,f_2 \rangle (h)=\int_G \bar{f_1}(g)f_2(gh)dg.$$
Denote by $J(G)$ the completion of $C(G)$ for the norm $\|f\|=\|\langle f, f\rangle\|^{1/2}_{C^*H}$ induced by the hermitian structure. Denote by $\pi_G : C^*G \rightarrow \mathcal{L}_{C^*H}(J(G))$ the natural representation of $C^*G$ induced by the left $G$ action on itself.

\begin{defi}$\cite{julg1982induction}$
The element $i^* \in \k\k(C^*G,C^*H)$ is defined as the class of the cycle $(J(G),\pi_G ,0).$
\end{defi}

\begin{lem}
The element $i^*$ is the image of the trivial representation of $H$ seen as a trivial $G$-equivariant vector bundle on $G/H$ by the Morita equivalence between $C(G/H)\rtimes G$ and $C^*H$. 
\end{lem}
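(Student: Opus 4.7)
The proof strategy is to unwind the Green-Rieffel type Morita equivalence between $C(G/H)\rtimes G$ and $C^*H$ and compare the resulting bimodule with Julg's $J(G)$. The first step is to describe the imprimitivity bimodule $\mathcal{M}$ explicitly: it is the completion of $C(G)$ equipped with the right $L^1(H)$-module structure $(f\cdot\psi)(g)=\int_H f(gh^{-1})\psi(h)\,dh$ and the $C^*H$-valued inner product $\langle f_1,f_2\rangle(h)=\int_G\overline{f_1}(g)f_2(gh)\,dg$, together with a left action of $C(G/H)\rtimes G$ in which $C(G/H)$ acts by pullback and pointwise multiplication and $G$ acts by left translation. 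One then cites the classical Green-Rieffel theorem to know that $\mathcal{M}$ is a genuine $(C(G/H)\rtimes G, C^*H)$-imprimitivity bimodule, hence defines an invertible class $[\mathcal{M}]\in \k\k(C(G/H)\rtimes G, C^*H)$.

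The second step is to translate the phrase ``trivial representation of $H$ seen as the trivial $G$-equivariant bundle on $G/H$'' into a Kasparov-theoretic statement. Viewing the constant function $1\in C(G/H)$ as a unital $G$-equivariant $*$-homomorphism $\mathbb{C}\to C(G/H)$ and applying the crossed product construction yields a $*$-homomorphism $j:C^*G\to C(G/H)\rtimes G$. Its image under the Morita equivalence is the Kasparov product
\[
[j]\underset{C(G/H)\rtimes G}{\otimes}[\mathcal{M}]\ \in\ \k\k(C^*G,C^*H),
\]
which, since $j$ is a $*$-homomorphism, is represented by the bimodule $\mathcal{M}$ equipped with the left $C^*G$-action precomposed by $j$ and with zero operator.

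The third step is a direct comparison of formulas. Under $j$, an element $\varphi\in C^*G$ acts on $\mathcal{M}=C(G)$ precisely by the left translation representation, which is Julg's $\pi_G$. The right $C^*H$-module structure and $C^*H$-valued inner product on $\mathcal{M}$ are identical to those defining $J(G)$ by construction. Hence the Kasparov cycle $(\mathcal{M},j,0)$ coincides with $(J(G),\pi_G,0)$, whose class is $i^*$ by definition.

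The main obstacle is not so much a deep argument as a careful bookkeeping: one must make sure that the ``image under Morita equivalence'' is the correct Kasparov product rather than just the induced isomorphism on $K_0$, since the output lives in the bivariant group $\k\k(C^*G,C^*H)$ rather than in $R(H)=\k_0(C^*H)$. Once the trivial bundle is replaced by the $*$-homomorphism $j$ arising from the unital inclusion $\mathbb{C}\hookrightarrow C(G/H)$, everything else is an identification of the very same formulas that Julg uses to define $J(G)$.
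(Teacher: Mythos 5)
Your proof is correct and follows essentially the same route as the paper: both proceed by identifying the trivial bundle class as the cycle coming from the unital inclusion $\mathbb{C}\to C(G/H)$ (which, after taking crossed products by $G$, is exactly the paper's application of Kasparov descent $j^G$), then composing with the imprimitivity bimodule and observing that the result is Julg's cycle $(J(G),\pi_G,0)$. Your third step is slightly more explicit than the paper's about why the Kasparov product of a $*$-homomorphism class with the Morita bimodule collapses to the bimodule with precomposed action, but the underlying argument is identical.
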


\begin{proof}
In $\k\K(\mathbb{C},C(G/H))$ the trivial representation of $H$ is given by the cycle $(C(G/H),0)$. Its image by the Kasparov descent $j^G([C(G/H),0])=[C(G/H)\rtimes G ,\pi_G , 0]$ is an element of $\k\k(C^*G,C(G/H)\rtimes G)$. The cycle given by the Morita equivalence is $[J(G) , \rho , 0]$, where $\rho : C(G/H)\rtimes G\rightarrow \mathcal{L}_{C^*H}(J(G))$ is given by $\rho (\theta )s(g)=\int_G \theta(t,g)s(t^{-1}g) dt $ for all $\theta \in C(G,C(G/H))$ and $s\in C(G)$. 
\end{proof}

We also denote by $i_* : \k\k(C^*H ,C(B)) \rightarrow \k\k(C^*G,C(B))$ the left Kasparov product by $i^*\in \k\k(C^*H ,C^*G)$.
\begin{thm}$\cite{atiyah1974elliptic}$\label{thm:induction:1}
The following diagram is commutative.

$$\xymatrix{\k_\mathrm{H}\big(T^V_HM \big) \ar[r]^{i_*}\ar[d]_{\ind } & \K\big( T^V_GY \big ) \ar[d]^{\mathrm{Ind}^{Y|B}}\\
\k\k(C^*H,C(B)) \ar[r]_{i_*} &\k\k(C^*G ,C(B))
}$$

\end{thm}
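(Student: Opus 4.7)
The plan is to establish commutativity by combining the two structural theorems already proved in this section---the free action theorem (Theorem~\ref{thm:action:libre}) applied to $G\times M$ with its free right $H$-action, and the multiplicativity theorem (Theorem~\ref{thm:multiplicativité:indice}) applied to the external product of the zero operator on $G$ with an operator on $M$---and then to recognize the outcome via Julg's description of $i^{*}$ recalled above. No genuinely new analysis is needed: everything will reduce to an associativity computation in Kasparov's bivariant theory.

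I would begin by unwinding the definition of $i_{*}$: by construction, $i_{*}a\in \K(T^{V}_{G}Y)$ is the unique class whose pullback along the quotient projection $\pi^{H}:G\times M\to Y$ equals $a\sharp [\sigma(0)]$ in $\Kh(T^{V}_{G\times H}(G\times M))$. Specializing the free action theorem to the trivial representation $W_{0}=\mathbb{C}$ (so that tensoring by $\underline{W}_{0}^{*}$ is the identity) gives
$$
\mathrm{Ind}^{Y|B}(i_{*}a)\;=\;\chi_{0}^{H}\otimes_{C^{*}H}\mathrm{Ind}^{G\times M|B}\bigl(a\sharp[\sigma(0)]\bigr).
$$
Next, I apply the multiplicativity theorem, but with the roles of the two groups interchanged: our $H$-equivariant fibration $M\to B$ plays the role of the "only-one-group-equivariant" factor, while $G\to\star$ with its left $G$- and right $H$-action plays the role of the $G\times H$-equivariant factor. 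This produces
$$
\mathrm{Ind}^{G\times M|B}\bigl(a\sharp[\sigma(0)]\bigr)\;=\;j^{H}\bigl(\mathrm{Ind}^{G|\star}([\sigma(0)])\bigr)\otimes_{C^{*}H}\mathrm{Ind}^{M|B}(a).
$$
Substituting into the previous equality and invoking associativity of the Kasparov product over $C^{*}H$ yields
$$
\mathrm{Ind}^{Y|B}(i_{*}a)\;=\;\Bigl(\chi_{0}^{H}\otimes_{C^{*}H}j^{H}\bigl(\mathrm{Ind}^{G|\star}([\sigma(0)])\bigr)\Bigr)\otimes_{C^{*}H}\mathrm{Ind}^{M|B}(a).
$$
By the Julg lemma recalled above, the bracketed expression is precisely $i^{*}\in\k\k(C^{*}G,C^{*}H)$, so the right-hand side equals $i_{*}\mathrm{Ind}^{M|B}(a)$, as required.

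The principal difficulty will be the bookkeeping of equivariant structures so that associativity of the Kasparov product is legitimately applicable. One must verify that the $H$-equivariance of $\mathrm{Ind}^{G|\star}([\sigma(0)])$ coming from right $H$-translation on $G$, whose descent $j^{H}$ is paired with $\chi_{0}^{H}$ in Julg's formula, is literally the same $H$-equivariant structure that enters the multiplicativity decomposition of the index of $a\sharp[\sigma(0)]$ over $G\times M\to B$. A secondary, more bureaucratic subtlety is that the multiplicativity theorem is not stated symmetrically in its two group arguments, so the swap $G\leftrightarrow H$ must be carried out carefully, and one should check that the convention for $a\sharp b$ versus $b\sharp a$ used in defining $i_{*}$ agrees with the one forced by the statement of the multiplicativity theorem.
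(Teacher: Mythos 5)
Your proposal is correct and follows essentially the same route as the paper: applying Theorem~\ref{thm:action:libre} with the trivial representation, the multiplicativity theorem \ref{thm:multiplicativité:indice} to $\mathrm{Ind}^{G\times M|B}(a\sharp[\sigma(0)])$, and the lemma identifying $i^{*}$ with $\chi_{0}^{H}\otimes_{C^{*}H}j^{H}(\mathrm{Ind}^{G|\star}([\sigma(0)]))$, then concluding by associativity of the Kasparov product. The only difference is a trivial reordering of the first two steps; the cautions you raise about equivariance bookkeeping and the $G\leftrightarrow H$ role swap in multiplicativity are legitimate points to verify but are handled implicitly in the paper's argument.
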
 

\begin{proof}
Let $a\in \k_\mathrm{H}(T^V_HM )$. From the multiplicative property of the index, we have:
$$\mathrm{Ind^{G\times M|B}}(a\cdot [\sigma (0)]) =j^H\big(\mathrm{Ind}^{\mathrm{G|\star} }([\sigma (0)]) \big)\otimes_{C^*H} \ind(a).$$
Furthermore, the isomorphism $\k_{\mathrm{G\times H}}\big(T^V_{G\times H}(G\times M)\big) \cong \K(T^V_GY)$ send $a\cdot [\sigma(0)]$ on $i_*(a)$ by definition of $i_*$. By Theorem \ref{thm:action:libre}, we know that $\mathrm{Ind}^{\mathrm{Y|B}}(i_*(a))=\chi_0^H\otimes_{C^*H}\mathrm{Ind}^{\mathrm{G\times M|B}}(a\cdot [\sigma(0)])$. From this we deduce that: 
$$\mathrm{Ind}^{\mathrm{Y|B}}(i_*(a))=\chi_0^H\otimes_{C^*H}\big[j^H\big(\mathrm{Ind}^{\mathrm{G|\star} }([\sigma (0)]) \big)\otimes_{C^*H} \ind(a)\big].$$
It is easy to see that $i_*=\chi_0^H\otimes_{C^*H}j^H\big(\mathrm{Ind}^{\mathrm{G|\star} }([\sigma (0)]) \big)$ coincides with $i^*$. Indeed, the action of $H$ on $C^*G$ is trivial here so this follows from a trivial generalization of the Green-Julg theorem, see \cite{julg1981produit:croise}. We get the result by associativity of the Kasparov product.
\end{proof}

\begin{remarque}
The previous theorem allows for instance to reduce the index problem to the case of connected Lie groups, since any compact Lie group can be embedded in a unitary group.
\end{remarque}

In order to reduce the computation of the index map to
the case of torus action, we need to investigate the induction map. Let us first note that if $M$ is a compact $G$-fibration then the map $(g,m) \rightarrow (gH,g\cdot m)$ is a $G$-equivariant diffeomorphism between the $G$-fibrations $Y=G\times_HM$ and $G/H\times M$ over $B$. Let us assume that $G$ is connected and that $H$ is a maximal torus so that the homogeneous space $G/H$ is a complex manifold, and hence is $\k$-oriented. Then we have a Dolbeault operator $\overline{\partial}$ on $G/H$ whose $G$-equivariant index is $\mathrm{Ind}(\overline{\partial})=1 \in R(G)$. By the multiplicative property of the index, we have the following commutative diagram:
$$\xymatrix{\K(T^V_GM) \ar[d]_{\mathrm{Ind}^{\mathrm{M|B}}}& \hspace*{-1cm}\bigotimes &\hspace*{-1cm} \K(T^*(G/H))\ar[r]\ar[d]^{\mathrm{Ind}^{\mathrm{G/H|\star}}}&\K(T^V_GY)\ar[d]^{\mathrm{Ind}^{\mathrm{Y|B}}}\\
\k\k(C^*G,C(B))& \hspace*{-1cm}\bigotimes & \hspace*{-1cm}\k\K(\mathbb{C},\mathbb{C}) \ar[r] &\k\k(C^*G,C(B)).
}$$ 
Multiplication by the symbol $[\sigma (\overline{\partial})]\in \K(T^*(G/H))$ induces a morphism $$k : \K(T^V_GM) \rightarrow \K(T^V_GY)$$ which preserves the index map. More precisely: 

\begin{thm}
Let $H$ be a maximal torus of a connected compact Lie group $G$. Denote by $r= (i_*)^{-1}\circ k : \K(T^V_GM)\rightarrow \k_{\mathrm{H}}(T^V_HM)$ the composition of the morphism $k$ with the inverse of $i_*$. The following diagram then commutes: 
$$\xymatrix{\K(T^V_GM) \ar[r]^r \ar[d]_{\mathrm{Ind}^{M|B}}& \k_H(T^V_HM)\ar[d]^{\mathrm{Ind}^{M|B}}\\
\k\k(C^*G,C(B)) &\k\k(C^*H,C(B)) \ar[l]^{i_*}.
}$$
\end{thm}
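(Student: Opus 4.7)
The plan is to chain together the induction theorem proved immediately above with the multiplicativity Theorem~\ref{thm:multiplicativité:indice}, using the Borel--Weil--Bott fact already recorded in the paragraph preceding the statement that the $G$-equivariant Dolbeault index satisfies $\mathrm{Ind}(\bar\partial)=1\in R(G)$. No further analytic input is needed; the argument is a purely formal chase of these three prerequisites.

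First I would unfold the definition $r=(i_*)^{-1}\circ k$, which gives $i_*(r(a))=k(a)$ in $\K(T^V_GY)$ for every $a\in\K(T^V_GM)$. Applying the induction theorem (the preceding result) to the element $r(a)\in\k_\mathrm{H}(T^V_HM)$ then yields
$$i_*\bigl(\mathrm{Ind}^{M|B}(r(a))\bigr)\;=\;\mathrm{Ind}^{Y|B}(i_*(r(a)))\;=\;\mathrm{Ind}^{Y|B}(k(a)).$$

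The second ingredient is the multiplicativity diagram displayed immediately above the theorem statement, which identifies $\mathrm{Ind}^{Y|B}(k(a))$ with the Kasparov product of $\mathrm{Ind}^{M|B}(a)\in\k\k(C^*G,C(B))$ and $\mathrm{Ind}^{G/H|\star}(\bar\partial)\in\k\K(\mathbb{C},\mathbb{C})=R(G)$ under the natural right $R(G)$-module structure on $\k\k(C^*G,C(B))$. Since $G$ is connected and $H$ is a maximal torus, Borel--Weil--Bott gives $\mathrm{Ind}(\bar\partial)=1$ in $R(G)$, and the $R(G)$-action by $1$ is the identity on $\k\k(C^*G,C(B))$. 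Therefore $\mathrm{Ind}^{Y|B}(k(a))=\mathrm{Ind}^{M|B}(a)$, and combining with the previous display yields the desired equality $i_*(\mathrm{Ind}^{M|B}(r(a)))=\mathrm{Ind}^{M|B}(a)$.

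The only genuinely delicate bookkeeping is verifying that the bottom horizontal arrow of the multiplicativity diagram above really is the $R(G)$-action, and that it acts trivially on $1$. Both facts are already implicit in Theorem~\ref{thm:multiplicativité:indice} applied with second fibration $G/H\to\star$ and trivial extra group (so that $B'=\star$ and the descent becomes the standard pairing between $\k\K(\mathbb{C},\mathbb{C})$ and $\k\k(C^*G,C(B))$). Hence no obstruction remains beyond writing this identification cleanly.
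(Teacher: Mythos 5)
Your proof is correct and takes essentially the same route as the paper: the paper first verifies (using Theorem~\ref{thm:multiplicativité:indice}, the descent map $j^G$, and $\mathrm{Ind}(\bar\partial)=1$) that $\mathrm{Ind}^{Y|B}\circ k = \mathrm{Ind}^{M|B}$, and then glues this square to the induction square $\mathrm{Ind}^{Y|B}\circ i_* = i_*\circ\mathrm{Ind}^{M|B}$ from the preceding theorem, which is exactly the chain you build. The only cosmetic difference is that the paper spells out the multiplicativity step via $j^G(\mathrm{Ind}(\bar\partial))\otimes_{C^*G}\ind(a)=j^G(1)\otimes_{C^*G}\ind(a)=\ind(a)$, where you summarize it as the action of $1\in R(G)$ being the identity.
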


\begin{proof}The following diagram is commutative:
$$\xymatrix{\K(T^V_GM) \ar[r]^k \ar[d]_{\mathrm{Ind}^{\mathrm{M|B}}}& \K(T^V_GY)\ar[d]^{\mathrm{Ind}^{\mathrm{Y|B}}}\\
\k\k(C^*G,C(B)) \ar[r]_{=} &\k\k(C^*G,C(B)) .
}$$
Indeed, if $a\in \K(T^{V}_GM)$, we have:
$$\begin{array}{lll}
\mathrm{Ind}^{\mathrm{Y|B}}(k(a))&=\mathrm{Ind}^{\mathrm{Y|B}}(a\cdot [\sigma(\overline{\partial})])\\
&=j^G\big(\mathrm{Ind}(\overline{\partial})\big)\otimes_{C^*G} \ind(a)\\
&=j^G\big(1\big)\otimes_{C^*G} \ind(a)\\
&=1\otimes_{C^*G} \ind(a)\\
&=\ind(a).
\end{array}$$
By Theorem \ref{thm:induction:1}, we get that the following diagram is commutative:
$$\xymatrix{\K(T^V_GM) \ar[r]^k \ar[d]_{\mathrm{Ind}^{\mathrm{M|B}}}& \K(T^V_GY)\ar[d]^{\mathrm{Ind}^{\mathrm{Y|B}}}& \ar[l]_{i_*} \k_\mathrm{H}(T^V_HM)\ar[d]^{\mathrm{Ind}^{\mathrm{M|B}}}\\
\k\k(C^*G,C(B)) \ar[r]_{=} &\k\k(C^*G,C(B)) &\k\k(C^*H,C(B)) \ar[l]^{i_*}.
}$$

\end{proof}

We end this section with the statement of Theorem \ref{thm:naturalité:!} below which shows the compatibility of the index map with the Gysin map.
Let $j : M \hookrightarrow M'$ be an inclusion of $G$-fibrations on $B$ such that $p'\circ j=p$ where $p : M \rightarrow B$ and $p' : M'\rightarrow B $ are the projections. Let us assume that $M$ is compact. Following \cite{atiyah1974elliptic}, we will define a morphism of $R(G)$-modules $j_! : \K(T^V_GM) \rightarrow \K(T^V_GM')$.\\
Let $N$ be a $G$-invariant tubular neighbourhood of $M$ in $M'$. Then $N$ is a $G$-manifold and it can be identified with the normal bundle of $M$ in $M'$. The vector bundle $T^VM$ is a closed $G$-submanifold of $T^VM'$ and the tubular neighbourhood $U$ of $T^VM$ in $T^VM'$ can be identified with the vector bundle given by lifting the vector bundle $N\oplus N$ by the projection $\pi :T^VM \rightarrow M$. The exterior algebra of $\pi^*(N\otimes \mathbb{C})$ defines a complex on $T^VN$ which is exact outside the zero section $T^VM \subset T^VN$. We denote it by $\bigwedge(T^VN)$. Denote by $q : T_G^VN \rightarrow T^V_GM$ the projection. If $E$ is a complex on $T^V_GM$ with compact support then the product $\bigwedge(T^VN)_{|T_GN}\otimes q^*E$ has compact support in $T_G^VN$. The product by $\bigwedge (T^VN)$ defines a morphism of $R(G)$-modules 
$$\phi : \K(T^V_GM)\rightarrow \K(T_G^VN),$$
called the Thom homomorphism. As $T_G^VN$ is an open subspace of $T^V_GM'$, the inclusion induces a map 
 $$k_* : \K(T_G^VN) \rightarrow \K(T^V_GM').$$  
The composition of these two maps gives the desired Gysin map
$$j_! : \K(T^V_GM) \rightarrow \K(T^V_GM').$$

\begin{thm}$\cite{atiyah1974elliptic}$\label{thm:naturalité:!}
Let $j :M\hookrightarrow M'$ be a $G$-embedding over $B$ with $M$ compact.
The following diagram is commutative:
$$\xymatrix{\K(T^V_GM) \ar[r]^{j_!} \ar[d]_{\ind}& \K(T^V_GM') \ar[d]^{\mathrm{Ind}^{\mathrm{M'|B}}} \\
\k\k(C^*G ,C(B)) \ar@{=}[r] &\k\k(C^*G ,C(B)).
}$$
\end{thm}

\begin{proof}
By the excision theorem, the index commutes with $k_*$. It is thus sufficient to show the theorem in the case of a real $G$-vector bundle $N$ on $M$ and with $j_! : \K(T_G^VM) \rightarrow \K(T_G^VN)$ being the Thom homomorphism. We can write $N = P\times_{O(n)} \mathbb{R}^n$, where $P$ is a $O(n)$-principal bundle on $M$. The $G$-action on $P$ commutes with the $O(n)$-action and the $G$-action is trivial on $\mathbb{R}^n$. We have the following product: 
$$\k_{\mathrm{G\times O(n)}}(T_{G\times O(n)}^VP) \otimes \k_{\mathrm{G\times O(n)}}(T\mathbb{R}^n)\rightarrow \k_{\mathrm{G\times O(n)}}(T_{G\times O(n)}^V(P\times \mathbb{R}^n)),$$
and the following isomorphisms:
$$\begin{array}{lll}
q_1^* : \K(T^V_GM) &\rightarrow \k_{\mathrm{G\times O(n)}}(T^V_{G\times O(n)}P)\\
q_2^* : \K(T^V_GN) &\rightarrow \k_{\mathrm{G\times O(n)}}(T_{G\times O(n)}^V(P \times \mathbb{R}^n)).
\end{array}$$
Therefore, we obtain a product: 
$$\K(T_{G}^VM) \otimes \k_{\mathrm{G\times O(n)}}(T\mathbb{R}^n)\rightarrow \K(T^V_GN).$$
The inclusion of the origin in $\mathbb{R}^n$ induces a Bott morphism $i_! : R(O(n))\rightarrow \k_{\mathrm{O(n)}}(T\mathbb{R}^n)$ and we have $\mathrm{Ind}(i_!(1))=1 \in \k\k_{O(n)}(\mathbb{C},\mathbb{C})$ by \cite{Atiyah-Singer:I}. As $G$ acts trivially on $\mathbb{R}^n$, we get that $\mathrm{Ind}(i_!(1))=1 \in \k\k_{\mathrm{G\times O(n)}}(\mathbb{C},\mathbb{C})$. The product by $i_!(1)$ makes the following diagram commutative 
$$\xymatrix{ \K(T^V_GM) \ar[r]^{j_!} \ar[d]_{q_1^*}& \K(T^V_GN)\ar[d]^{q_2^*} \\
\k_{\mathrm{G\times O(n)}}(T^V_{G\times O(n)}P)\ar[r]_{\hspace*{-0.5cm}\cdot i_!(1)} & \k_{\mathrm{G\times O(n)}}(T^V_{G\times O(n)}(P\times \mathbb{R}^n)) 
}$$
from this it follows that 
$$\mathrm{Ind}^{\mathrm{P\times \mathbb{R}^n|B}}(q_2^*(j_!(a))=\mathrm{Ind}^{\mathrm{P\times \mathbb{R}^n|B}}(q_1^*(a)\cdot i_!(1)),\ \forall a\in\K(T^V_GM).$$
By the multiplicative property of the index, we obtain:
$$\mathrm{Ind}^{\mathrm{P\times \mathbb{R}^n|B}}(q_2^*(j_!(a))=j^{G\times O(n)}\big(1\big) \otimes_{C^*G\otimes C^*O(n)}\mathrm{Ind}^{\mathrm{P|B}}(q_1^*(a)).$$ 
Furthermore, we have the following equalities by Theorem \ref{thm:action:libre}:
$$\ind (a )=\chi_0^{O(n)} \otimes_{C^*O(n)} \mathrm{Ind}^{\mathrm{P|B}}(q_1^*(a)),$$
$$\mathrm{Ind}^{\mathrm{N|B}}(j_!(a))=\chi_0^{O(n)}\otimes_{C^*O(n)} \mathrm{Ind}^{\mathrm{P\times \mathbb{R}^n|B}}(q_2^*(j_!(a)),$$
where $\chi_0^{O(n)}$ is the $O(n)$ trivial representation.
Finally we conclude that $\ind (a )=\mathrm{Ind}^{\mathrm{N|B}}(j_!(a))$ since $j^{G\times O(n)}(1)=1$.

\end{proof}

\begin{remarque}
The properties of the index proved above allow to reduce the computation of the index map to the case of a fibration $B\times \mathbb{R}^n \rightarrow B $ equipped with a torus action on $\mathbb{R}^n$. 
\end{remarque}

\section{Kasparov's intersection product of the index class with an elliptic operator on the base}

In the present section, we insist that $B$ is a manifold. This was not rigorously needed in the previous sections.
We compute now the Kasparov product of the index class of family of $G$-transversally elliptic operators with the $\k$-homology class of an elliptic operator on the base $B$.
We begin by recalling some definitions and results from \cite{hilsum1987morphismes}. \\

\begin{prop}$\cite[\mathrm{Proposition~A.8}]{hilsum1987morphismes}$\label{prop:Q-connexion}
Let $E_1$ be a hermitian bundle on $M$ and $E_2$ be a hermitian bundle on $B$.
Let $Q : C^{\infty}(B,E_2) \rightarrow C^{\infty}(B,E_2)$ be a pseudodifferential operator on $B$ of order $0$. Let $Q' : C^{\infty}(M,E_1\otimes p^*E_2) \rightarrow C^{\infty}(M,E_1\otimes p^*E_2)$ be a pseudodifferential operator of order $0$ such that
$$\sigma_{|p^*T^*B}(Q')=1_{E_1}\hat{\otimes}p^*\sigma(Q) ,$$
where $p^*\sigma(Q)(m,\xi)=\sigma(Q)(p(m),\xi)$, $\forall (m,\xi)\in p^*T^*B$.
Then $Q'$ is a $Q$-connection for $\mathcal{E}_1$, where $\mathcal{E}_1$ is the $C(B)$-module associated to $E_1$ defined in Section \ref{C(B):module}.
\end{prop}

\begin{prop}$\mathrm{\cite[\mathrm{proposition~A.10.2}]{hilsum1987morphismes}}$\label{prop:crochet:pseudodiff+positivité}
Let $P : C^{\infty,0}(M,E_1)\rightarrow C^{\infty,0}(M,E_1)$ be a family of pseudodifferential operators of order $0$ on $M$. Let $Q'$ be a pseudodifferential operator of order $0$ on $M$ such that
$$\sigma_{|p^*T^*B}(Q')=1_{E_1}\hat{\otimes} \sigma ',$$
where $\sigma'$ is a symbol of order $0$ on $B$. Then $[P\hat{\otimes} 1, Q'] : C^{\infty}(M,E_1\hat{\otimes}p^*E_2)\rightarrow C^{\infty}(M,E_1\hat{\otimes}p^*E_2) $ is a pseudodifferential operator of order $0$ and $$\sigma ([P\hat{\otimes} 1 ,Q'])=[\sigma(P)\hat{\otimes}1,\sigma (Q')].$$ 
\end{prop}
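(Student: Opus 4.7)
The statement is local in $M$, so I would begin by reducing to a trivializing chart for the fibration $p$, with local coordinates $(x,y)$ ($x$ on the base, $y$ on the fiber) and dual $(\xi,\eta)$, together with trivializations of $E_1$ and $E_2$. In these coordinates, the family $P$ acts as $p(x,y,D_y)\otimes 1_{E_2}$ with symbol $p(x,y,\eta)$ of order $0$ in $(y,\eta)$ depending continuously on the base parameter $x$; crucially, $\sigma(P\hat{\otimes}1)$ is independent of $\xi$. The operator $Q'$ acts as a standard pseudodifferential operator on $M$ with full symbol $q(x,y,\xi,\eta)$, and the hypothesis provides $q(x,y,\xi,0)=1_{E_1}\hat{\otimes}\sigma'(x,\xi)$ with $\sigma'$ independent of $y$.

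The plan is then, on the surface, straightforward: apply the asymptotic composition formula
$$\sigma(AB)(z,\zeta)\sim\sum_{\alpha}\frac{(-i)^{|\alpha|}}{\alpha!}\,\partial_\zeta^\alpha\sigma(A)(z,\zeta)\,\partial_z^\alpha\sigma(B)(z,\zeta)$$
to both $(P\hat{\otimes}1)Q'$ and $Q'(P\hat{\otimes}1)$, and subtract. The $\alpha=0$ contribution to $\sigma([P\hat{\otimes}1,Q'])$ is the matrix commutator $[\sigma(P)\hat{\otimes}1,\sigma(Q')]$ of order $0$; the terms with $|\alpha|\geq 1$ should contribute only symbols of order $\leq -1$.

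The main obstacle lies in the fact that $P\hat{\otimes}1$ is not a standard pseudodifferential operator on $M$---its would-be symbol $p(x,y,\eta)\otimes 1$ is $\xi$-independent, hence fails the required $\xi$-decay estimates---and moreover $P$ depends only continuously on $x$, making $\partial_x$-derivatives of $\sigma(P)$ ill-defined a priori. To overcome this, I would exploit two features. First, the $\xi$-independence of $\sigma(P\hat{\otimes}1)$ kills every term involving $\partial_\xi^\beta\sigma(P\hat{\otimes}1)$ with $\beta\neq 0$ in the expansion of $\sigma((P\hat{\otimes}1)Q')$, so this composition requires only $\partial_\eta$-derivatives of $\sigma(P\hat{\otimes}1)$, which are perfectly well-defined. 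Second, the transverse factorization $\sigma(Q')|_{\eta=0}=1_{E_1}\hat{\otimes}\sigma'(x,\xi)$, with $\sigma'$ constant along fibers, allows one to split
$$\sigma(Q')=1_{E_1}\hat{\otimes}\sigma'(x,\xi)+r(x,y,\xi,\eta),$$
where the remainder $r$ vanishes on the conormal $\{\eta=0\}$. The first summand commutes with $\sigma(P)\hat{\otimes}1$ pointwise, and the cross terms in the commutator expansion pair $\partial_\xi^\alpha(1_{E_1}\hat{\otimes}\sigma')$ against $\partial_z^\alpha(\sigma(P)\hat{\otimes}1)$---the potentially problematic contributions involving $\partial_x^\beta p$ with $\beta\neq 0$ cancel between the two orderings of the composition modulo symbols of order $\leq -1$. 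The remainder $r$, divisible by $\eta$, is of vertical character and pairs well with the vertical family $P\hat{\otimes}1$ in the standard fiberwise pseudodifferential calculus.

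Collecting these contributions, the full symbol of $[P\hat{\otimes}1,Q']$ is a pseudodifferential symbol of order $0$ on $M$ whose principal part is exactly $[\sigma(P)\hat{\otimes}1,\sigma(Q')]$, as asserted. The most delicate step to justify rigorously is the cancellation of the $\partial_x^\beta p$ terms between $\sigma((P\hat{\otimes}1)Q')$ and $\sigma(Q'(P\hat{\otimes}1))$; this is where the hypothesis that $\sigma'$ is constant along the fibers (together with the compatibility from Proposition \ref{prop:Q-connexion}) is indispensable, since it ensures that the ``transverse part'' of the expansion simplifies in precisely the way needed for the family calculus of continuous (rather than smooth) vertical families to close up.
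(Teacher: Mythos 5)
The paper does not prove this proposition; it is quoted verbatim from Hilsum--Skandalis (Proposition A.10.2) and used as a black box, so there is no ``paper's own proof'' to compare against. Evaluating your argument on its own merits, there is a genuine gap in the key cancellation step.

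You correctly identify the two obstacles: $P\hat{\otimes}1$ is not a classical $\Psi$DO on $M$ (its would-be symbol is $\xi$-independent) and $P$ is only continuous in the base variable $x$, so $\partial_x^\alpha\sigma(P)$ is undefined. Your treatment of $(P\hat{\otimes}1)Q'$ is sound: in the composition expansion $\sum_\alpha \tfrac{1}{\alpha!}\partial_\zeta^\alpha\sigma(P\hat{\otimes}1)\cdot D_z^\alpha\sigma(Q')$, the $\xi$-independence of $p\otimes 1$ forces the $\xi$-component of $\alpha$ to vanish, so only $\partial_\eta^{\alpha''}p\cdot D_y^{\alpha''}q$ survives and no $\partial_x p$ ever appears. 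But the other ordering is where the problem lies, and it is not the problem you claim. In $\sum_\alpha \tfrac{1}{\alpha!}\partial_\zeta^\alpha\sigma(Q')\cdot D_z^\alpha\sigma(P\hat{\otimes}1)$ the right-hand factor carries $D_x^{\alpha'}D_y^{\alpha''}(p\otimes 1)$, and the terms with $\alpha'\neq 0$ involve $D_x^{\alpha'}p$. These terms appear \emph{only} in the $Q'(P\hat{\otimes}1)$ expansion --- the $(P\hat{\otimes}1)Q'$ expansion produces none --- so your assertion that they ``cancel between the two orderings'' is false: there is nothing on the other side for them to cancel against. Nor can you discard them as order $\leq -1$, because being of lower order is a property of a symbol, and these expressions are not symbols at all when $p$ is merely $C^{0}$ in $x$. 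So the very first step --- invoking the asymptotic composition formula for $Q'(P\hat{\otimes}1)$ --- is not legitimate, and the hypothesis that $\sigma'$ is constant along the fibers, which you cite as the rescuing ingredient, does not create the cancellation.

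A correct proof must avoid expanding $Q'(P\hat{\otimes}1)$ symbolically in the $x$-variable. The Hilsum--Skandalis argument is operator-theoretic: one splits $Q'$ at the \emph{operator} level as a transverse piece (whose Schwartz kernel commutes with fiberwise operators up to controlled error, using that $\sigma'$ is constant along fibers) plus a piece with kernel vanishing to order one on the fiber-diagonal conormal, and estimates each commutator directly by kernel computations within the $C^{\infty,0}$ family calculus --- never differentiating the continuous family $P$ in the base direction. If you want a symbolic route, you would need to first regularize $P$ to a smooth-in-$x$ family, apply the calculus there, and then pass to the limit in a topology strong enough to control the principal symbol; that limiting argument is exactly what your sketch omits.
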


\begin{remarque}
In the previous two propositions, all operators and all symbols can obviously be taken $G$-invariant.
\end{remarque}

Let us check that the product, between $G$-transversally elliptic symbols along the fibers and elliptic symbols on the base, gives $G$-transversally elliptic symbols on $M$.

\begin{lem}
The map $$\K(T^{*V}_GM)\otimes \k(T^*B)\rightarrow \K(T^*_GM)$$
defined by 
$$(\sigma\hat{\otimes}1+1\hat{\otimes}p^*\sigma')(m,\xi,\xi^{'})=\sigma(m,\xi)\hat{\otimes}1+1\hat{\otimes}\sigma'(p(m),\xi^{'}),$$
for $\sigma \in \K(T_G^{*V}M)$, $\sigma'\in \k(T^*B)$, $\xi\in T_G^VM_m$ and $\xi^{'} \in T_{p(m)}^*B$ induces a well defined product in $\k$-theory.
\end{lem}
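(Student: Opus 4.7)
I propose the following plan.

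\textbf{Step 1 (orthogonal decomposition).} Use the fixed $G$-invariant Riemannian metric on $M$ to obtain a $G$-equivariant orthogonal splitting $T^*M = T^{*V}M \oplus \mathcal{H}^*$, where $\mathcal{H}^* \cong p^*T^*B$ is the horizontal sub-bundle. For $(m,\xi)\in T^*M$ write $\xi = \xi^V + \xi^H$ with $\xi^V := \xi - p_*\xi \in T^{*V}_mM$ and $\xi^H := p_*\xi \in T^*_{p(m)}B$ (this is how to read the expression in the statement). $G$-invariance of the metric ensures the splitting is $G$-equivariant, and since $G$ acts trivially on $B$, the horizontal factor carries the trivial $G$-action.

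\textbf{Step 2 (the transversality condition depends only on $\xi^V$).} Since by definition $f'_m : \mathfrak{g} \to T_mM$ takes values in $T^V_mM$, its transpose $f'^*_m$ annihilates the horizontal part of $\xi$. Hence $q_m(\xi) = \|f'^*_m(\xi)\|^2 = \|f'^*_m(\xi^V)\|^2 = q_m(\xi^V)$, so that $(m,\xi)\in T^*_GM$ if and only if $(m,\xi^V)\in T^{*V}_GM$.

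\textbf{Step 3 (invertibility of the sharp product outside the zero section).} The formula $\sigma\hat{\otimes}1 + 1\hat{\otimes}\sigma'$ stands for the Atiyah sharp product
$$
(\sigma\sharp\sigma')(m,\xi) \;=\; \begin{pmatrix} \sigma(m,\xi^V)\otimes 1 & -1\otimes \sigma'(p(m),\xi^H)^* \\ 1\otimes \sigma'(p(m),\xi^H) & \sigma(m,\xi^V)^*\otimes 1 \end{pmatrix},
$$
whose squared norm is the diagonal operator with blocks $\sigma^*\sigma\otimes 1 + 1\otimes \sigma'^*\sigma'$ and $\sigma\sigma^*\otimes 1 + 1\otimes \sigma'\sigma'^*$. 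Hence $\sigma\sharp\sigma'$ is invertible at $(m,\xi)$ as soon as \emph{either} $\sigma(m,\xi^V)$ \emph{or} $\sigma'(p(m),\xi^H)$ is. Now fix $(m,\xi)\in T^*_GM$ with $\xi\neq 0$. By Step 2 either $\xi^V \neq 0$, in which case $(m,\xi^V)\in T^{*V}_GM\setminus M$ and $\sigma(m,\xi^V)$ is invertible by hypothesis, or $\xi^V=0$ and then $\xi^H=\xi\neq 0$ so $\sigma'(p(m),\xi^H)$ is invertible because $\sigma'$ is elliptic on $T^*B$. In either case $(\sigma\sharp\sigma')(m,\xi)$ is invertible, so it defines a symbol class in $\mathrm{K}_G(T^*_GM)$.

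\textbf{Step 4 (passage to K-theory).} It remains to check that the operation factors through the equivalence relations on symbols. Bilinearity on cycles is immediate from the matrix shape. For homotopy invariance, a homotopy $(\sigma_t)_{t\in [0,1]}$ (resp.\ $(\sigma'_t)$) of representatives produces the homotopy $\sigma_t \sharp \sigma'$ (resp.\ $\sigma\sharp \sigma'_t$); the pointwise argument of Step 3 applies uniformly in $t$, so the resulting symbol remains $G$-transversally elliptic throughout. Finally, using the description $\K(T^V_GM) = \mathcal{C}(T^V_GM)/\mathcal{C}_{\phi}(T^V_GM)$ recalled earlier, one checks that if $\sigma$ (or $\sigma'$) is induced by a bundle isomorphism over $M$ (resp.\ $B$), then the sharp product is likewise induced by a bundle isomorphism over $M$, hence represents zero in $\K(T^*_GM)$. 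Biadditivity and descent from $\K(T^{*V}_GM)\times \k(T^*B)$ to the tensor product over $\Z$ then give a well-defined map.

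The main obstacle is really Step 2 combined with Step 3: the key geometric input making the product land in \emph{$G$-transversally elliptic} symbols (rather than merely $G$-equivariant ones) is that the obstruction $q_m$ is invisible to the horizontal direction, which is precisely where ellipticity of $\sigma'$ kicks in. Everything else is formal from the Atiyah sharp-product formalism.
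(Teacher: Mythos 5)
Your proof is correct and rests on the same geometric insight as the paper's: the $G$-invariant metric splits $T^*M$ into vertical and horizontal parts, $q_m(\xi)$ ignores the horizontal part, and therefore $T^*_GM \cong T^{*V}_GM\times_M p^*T^*B$, over which the sharp product of $\sigma$ and $\sigma'$ is invertible away from a compact set. The paper compresses your Steps 3--4 into a single citation — it observes that $(\sigma,\sigma')\mapsto\sigma\hat\otimes 1 + 1\hat\otimes\sigma'$ is just the external Kasparov product landing in $\K(T^{*V}_GM\times T^*B)$, so well-definedness, biadditivity and homotopy invariance come for free, and then restricts along the closed embedding $T^{*V}_GM\times_M p^*T^*B\hookrightarrow T^{*V}_GM\times T^*B$. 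You replace this citation with an explicit matrix computation for invertibility of the sharp product and a direct check of homotopy invariance and of vanishing on $\mathcal{C}_\phi$. The trade-off is that your argument is self-contained and exposes exactly where transversal ellipticity is used (only $\xi^V$ enters $q_m$, while ellipticity of $\sigma'$ handles the purely horizontal directions), whereas the paper's is shorter but leans on the abstract Kasparov-product machinery.
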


\begin{proof}
On the one hand, the map $\K(T^{*V}_GM)\otimes \k(T^*B)\rightarrow \K(T^{*V}_GM\times p^*T^*B)$ defined by $(\sigma ,\sigma' ) \mapsto \sigma \hat{\otimes}1+1\hat{\otimes}p^*\sigma'$ induces a well defined product. On the other hand, $T^*_GM=T^{*V}_GM\times_M p^*T^*B$ so by restriction of the product in $ T^{*V}_GM\times p^*T^*B$ to $T^{*V}_GM\times_M p^*T^*B$, we obtain a product in $\k$-theory with values in $\K(T^*_GM)$.

\end{proof}

\begin{thm}\label{thm:pair:ell}
Let $P : C^{\infty,0}(M,E_1)\rightarrow C^{\infty,0}(M,E_1)$ be a $G$-invariant selfadjoint family of $G$-transversally elliptic pseudodifferential operators of order $0$. Let $Q : C^{\infty}(B,E_2)$ be an elliptic pseudodifferential operator on $B$ of order $0$. The Kasparov product of the class $[\mathcal{E}_1,\pi,P]=\ind(P)$ by $[Q]\in \k\k(C(B),\mathbb{C})$ is given by $[\mathcal{E}_1\hat{\otimes}_{C(B)}\mathcal{E}_2 , Q']$, where $Q'$ is any $G$-invariant pseudodifferential operator of order $0$ which is $G$-transversally elliptic on $M$ with symbol given by 
$$\sigma (Q')=\sigma (P)\hat{\otimes }1+1\hat{\otimes}p^*\sigma (Q).$$

\end{thm}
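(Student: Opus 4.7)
The plan is to verify that the triple $(\mathcal{E}_1\hat{\otimes}_{C(B)}\mathcal{E}_2,\,\pi\hat{\otimes}1,\,Q')$ is a Kasparov cycle over $(C^*G,\mathbb{C})$ representing the internal product $\ind(P)\otimes_{C(B)}[Q]$, by checking the Connes--Skandalis sufficient conditions in the form made available by Hilsum--Skandalis. As a preparatory step I would construct an explicit $G$-invariant quantization $Q'$ of the symbol $\sigma(P)\hat{\otimes}1+1\hat{\otimes}p^*\sigma(Q)$ on the bundle $E_1\hat{\otimes}p^*E_2$: start from a local quantization of $p^*\sigma(Q)$ obtained by lifting a quantization of $Q$ through the local trivializations of $p$, assemble them via a partition of unity on $B$ into a transverse operator on $M$, average over $G$, and add a $G$-invariant quantization of $\sigma(P)\hat{\otimes}1$ coming from Section~2. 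Any two such choices differ by $G$-invariant lower-order terms, which will not alter the resulting Kasparov class.

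The first check is that this triple actually defines an element of $E(C^*G,\mathbb{C})$. The principal symbol of $Q'$ is $G$-transversally elliptic on $T^*_GM$: on $T^V_GM$ it restricts to $\sigma(P)$, which is $G$-transversally elliptic by hypothesis, while in the complementary directions it becomes $p^*\sigma(Q)$, invertible away from the zero section of $p^*T^*B$ by ellipticity of $Q$. A straightforward adaptation of the proof of the main theorem of Section~2 then gives compactness of $((Q')^2-1)\circ(\pi\hat{\otimes}1)(\varphi)$: apply Proposition~\ref{prop:inégalité:preuve:thm} with a model operator combining the vertical orbit-direction operator $\Delta_G$ and the pullback $p^*\Delta_B$ of a Laplacian on $B$, and conclude via the averaging trick on $\psi=\varphi^{\star}\star\varphi$. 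The remaining algebraic requirements (self-adjointness, commutation of $\pi\hat{\otimes}1$ with $Q'$) follow from $G$-invariance exactly as in Remark~\ref{lem:Ppi=piP}.

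The heart of the argument consists in verifying the two Connes--Skandalis conditions. For the connection condition, the transverse symbol of $Q'$ equals
\[
\sigma_{p^*T^*B}(Q')=1_{E_1}\hat{\otimes}\,p^*\sigma(Q),
\]
which is constant along the fibers of $p$ by construction; Proposition~\ref{prop:Q-connexion} then directly yields that $Q'$ is a $Q$-connection for $\mathcal{E}_1$. For the positivity condition, Proposition~\ref{prop:crochet:pseudodiff+positivité} shows that the graded commutator $[P\hat{\otimes}1,Q']$ is a pseudodifferential operator of order $0$ with principal symbol $[\sigma(P)\hat{\otimes}1,\,1_{E_1}\hat{\otimes}p^*\sigma(Q)]$. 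Both factors are odd and are supported on disjoint tensor components, so this graded commutator vanishes identically by the Koszul sign rule. Hence $[P\hat{\otimes}1,Q']$ is of order $-1$, and multiplication by $\pi(\varphi)$ produces a compact operator; in particular, the commutator is trivially nonnegative modulo compacts. The Connes--Skandalis criterion then identifies the class of our cycle with the desired Kasparov product.

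The main obstacle is the compactness check for $((Q')^2-1)\pi(\varphi)$: it requires upgrading Proposition~\ref{prop:inégalité:preuve:thm} to the combined elliptic/transversally elliptic setting, i.e.\ comparing $1-(Q')^2$ with a model built simultaneously from $\Delta_G$ and $p^*\Delta_B$, and propagating the resulting Kasparov compactness through the $G$-averaging. A secondary subtlety is the careful tracking of the $\mathbb{Z}_2$-grading in the commutator computation, since $\sigma(P)$ and $\sigma(Q)$ are both odd and the graded tensor-product signs must be handled consistently; once these points are settled, the rest reduces to invoking the Hilsum--Skandalis machinery recalled above.
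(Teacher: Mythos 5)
Your overall structure is the correct one — it matches the paper's strategy of invoking the Hilsum--Skandalis criterion via Propositions \ref{prop:Q-connexion} and \ref{prop:crochet:pseudodiff+positivité}. However, there is a genuine error in your positivity step. Proposition~\ref{prop:crochet:pseudodiff+positivité} asserts that the principal symbol of the graded commutator $[P\hat{\otimes}1,Q']$ equals $[\sigma(P)\hat{\otimes}1,\sigma(Q')]$, where $\sigma(Q')$ is the \emph{full} principal symbol of $Q'$, not the transverse symbol $1_{E_1}\hat{\otimes}p^*\sigma(Q)$. You replaced $\sigma(Q')$ by its transverse part, obtaining a vanishing bracket and concluding that $[P\hat{\otimes}1,Q']$ is of order $-1$. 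This is wrong: writing $\sigma(Q')=\sigma(P)\hat{\otimes}1+1\hat{\otimes}p^*\sigma(Q)$, the cross term with $1\hat{\otimes}p^*\sigma(Q)$ does vanish by the Koszul sign rule, but the self-term $[\sigma(P)\hat{\otimes}1,\sigma(P)\hat{\otimes}1]$ gives (up to a harmless numerical factor) $\sigma(P)^2\hat{\otimes}1$, which is nonzero. So $[P\hat{\otimes}1,Q']$ is of order $0$ and is not compact in general.

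The correct positivity argument, which is what the paper does, is: by Proposition~\ref{prop:crochet:pseudodiff+positivité},
$$\sigma\bigl([P\hat{\otimes}1,Q']\bigr)=[\sigma(P)\hat{\otimes}1,\sigma(Q')]=\sigma(P)^2\hat{\otimes}1\ge 0,$$
because $P$ is self-adjoint, and an order-$0$ pseudodifferential operator with nonnegative principal symbol is nonnegative modulo compact operators. Combined with the $G$-invariance of $P\hat{\otimes}1$ and $Q'$ (which makes them commute with $\pi(\varphi)$), this establishes the positivity condition. The connection condition and the well-definedness of $\ind(Q')$ are handled as you indicate. You should also note that the paper does not need to re-prove that the triple defines a Kasparov cycle: once one knows $Q'$ is $G$-invariant and $G$-transversally elliptic on $M$, that is the case $B=\mathrm{pt}$ of the construction from Section~2, already handled in \cite{atiyah1974elliptic} and \cite{julg1982induction}, so the compactness check you flag as the ``main obstacle'' is not actually a new difficulty.
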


\begin{proof}
Notice first that if $Q'$ is a $G$-invariant and $G$-transversally elliptic operator on $M$ then $\mathrm{Ind}^M(Q') \in \k\k(C^*G,\mathbb{C})$ is well defned and only depends on its symbol $[\sigma(Q')]$, see \cite{atiyah1974elliptic}. Let us check that such $Q'$ is a $Q$-connection for $\mathcal{E}_1$ and that $\forall \varphi \in C^*G$, $\pi(\varphi)[P\hat{\otimes}1, Q']\pi(\varphi)^*$ is positive modulo compact operators.  Since $ \sigma (Q')_{|p^*T^*B}=1\hat{\otimes}p^*\sigma(Q)$, we get by Proposition \ref{prop:Q-connexion} that $Q'$ is a $Q$-connection for $\mathcal{E}_1$. On the other hand, we obtain by Proposition \ref{prop:crochet:pseudodiff+positivité} that $[P\hat{\otimes}1, Q']$ is a pseudodifferential operator on $M$ and that its symbol is given by
$$\sigma ([P\hat{\otimes} 1 ,Q'])=[\sigma(P)\hat{\otimes}1,\sigma (Q')]=\sigma(P)^2\hat{\otimes}1,$$
which is positive. Moreover, $Q'$ and $P\hat{\otimes} 1$ are $G$-invariant so they commute with $\pi(\varphi)$. It remains to show that $[P\hat{\otimes} 1 ,Q']$ is positive modulo compact operators. But $\sigma ([P\hat{\otimes} 1 ,Q'])\geq 0$ so 
$[P\hat{\otimes} 1 ,Q']\geq 0$ modulo compact operators.
\end{proof}

\begin{cor}
The pairing of the index class of a $G$-invariant family $P$ of $G$-transversally elliptic pseudodifferential operators of order $0$ with an element of the $\k$-homology $\k\k(C(B),\mathbb{C})$ of $B$ is given by the index class of a $G$-invariant pseudodifferential operator which is $G$-transversally elliptic on $M$.  
\end{cor}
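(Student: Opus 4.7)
The corollary is essentially a direct reformulation of the preceding theorem, plus the classical representability of $\k$-homology classes by elliptic operators. My plan is the following.

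First, I would invoke the Baum–Douglas/Atiyah representation of $\k$-homology: since $B$ is a compact smooth manifold, every class $x \in \k\k(C(B),\mathbb{C}) = \oK^0(B)$ can be represented by a cycle of the form $[Q]$ where $Q : C^\infty(B,E_2) \to C^\infty(B,E_2)$ is an elliptic pseudodifferential operator of order $0$ on $B$ (up to the standard reductions: one can always take $Q$ selfadjoint of order $0$, and replace it by its bounded transform). This is where the smoothness hypothesis on $B$, flagged in the introduction, actually gets used.

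Next, I would apply the preceding theorem to $P$ and to such a representative $Q$. It gives that the Kasparov product
\[
\ind(P) \otimes_{C(B)} [Q] \;\in\; \k\k(C^*G,\mathbb{C})
\]
is represented by the cycle $[\mathcal E_1 \hat\otimes_{C(B)} \mathcal E_2,\, Q']$, where $Q'$ is any $G$-invariant pseudodifferential operator of order $0$ on $M$ whose principal symbol is
\[
\sigma(Q') \;=\; \sigma(P) \hat\otimes 1 \;+\; 1 \hat\otimes p^*\sigma(Q),
\]
and the theorem further asserts that such $Q'$ is $G$-transversally elliptic on $M$.

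It then remains to identify the cycle $[\mathcal E_1 \hat\otimes_{C(B)} \mathcal E_2,\, Q']$ with the index class $\operatorname{Ind}^M(Q')$ in $\k\k(C^*G,\mathbb{C})$ as defined earlier in the paper. For this, I would use the standard identification of the internal tensor product $\mathcal E_1 \hat\otimes_{C(B)} \mathcal E_2$ with the Hilbert space $L^2(M, E_1 \hat\otimes p^*E_2)$ of $L^2$-sections on $M$ (coming from the continuous family of measures $(\mu_b)_{b\in B}$ on the fibers combined with the chosen measure on $B$ hidden inside the elliptic class $[Q]$), together with the fact that the $C^*G$-representation on $\mathcal E_1 \hat\otimes_{C(B)} \mathcal E_2$ restricts from the obvious $G$-action on $M$ and on $E_1\hat\otimes p^*E_2$. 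Under this identification the cycle becomes exactly the Julg–Kasparov cycle associated with the $G$-transversally elliptic operator $Q'$ on $M$, that is, $\operatorname{Ind}^M(Q')$.

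The only mildly subtle point, and the one I would treat with most care, is that the construction of $Q'$ in the theorem only specifies its principal symbol; hence the product depends a priori on the lift chosen. But the index class $\operatorname{Ind}^M(Q')$ depends only on the class of $\sigma(Q')$ in $\K(T^*_G M)$ by the first proposition of Section 3.1, so the resulting class in $\k\k(C^*G,\mathbb{C})$ is well defined. This makes the identification $\ind(P)\otimes_{C(B)}[Q] = \operatorname{Ind}^M(Q')$ canonical and finishes the proof.
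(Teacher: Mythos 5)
Your proposal is correct and follows essentially the same reasoning the paper leaves implicit (the corollary has no written proof because it is treated as an immediate consequence of the preceding theorem). The one ingredient you correctly make explicit is the representability of every class in $\k\k(C(B),\mathbb{C})$ for a compact smooth manifold $B$ by an order-zero elliptic pseudodifferential operator $Q$ on $B$; this classical fact is indeed where the smoothness of $B$ is used, and your subsequent identification of $\mathcal{E}_1\hat\otimes_{C(B)}\mathcal{E}_2$ with the $L^2$-sections of $E_1\hat\otimes p^*E_2$ over $M$, together with the well-definedness of $\mathrm{Ind}^{M}(Q')$ in terms of the symbol class alone, completes the argument cleanly.
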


\begin{proof}
As any element of $\k\k(C(B),\mathbb{C})$ can be represented by an elliptic operator on $B$, this is a consequence of Theorem \ref{thm:pair:ell}.
\end{proof}

\begin{cor}\label{cor:couplage:ind:Atiyah}
If $\alpha $ is an element of the $\k$-homology group of $B$ then the class $[\mathcal{E},\pi, P]\otimes_{C(B)} \alpha \in \k\k(C^*G,\mathbb{C})\simeq \mathrm{Hom}(R(G),\mathbb{C})$ is given by the distributional index of Atiyah \cite{atiyah1974elliptic}, i.e. the multiplicities of $ \ind(P)\otimes_{C(B)} \alpha $ are summable in the sense of distributions on $G$. Denote by $m([V]\otimes_{C*G}\ind(P)\otimes_{C(B)}\alpha)$ the integer associated to the multiplicity $[V]\otimes_{C*G}\ind(P)\otimes_{C(B)}\alpha$ of $V$ in $\ind(P)\otimes_{C(B)} \alpha $. We have that for any $ \varphi \in C^\infty(G)$:\\
the sum
$$\sum \limits_{V\in \hat{G}}m \big([V]\otimes_{C*G}\ind(P)\otimes_{C(B)}\alpha \big)\langle\chi_V,\varphi\rangle_{L^2(G)}$$
is convergent. 
\end{cor}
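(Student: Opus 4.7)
The plan is to reduce the statement to the classical Atiyah--Singer distributional index for a single $G$-transversally elliptic operator via the preceding theorem. Concretely, the preceding theorem identifies the Kasparov product
$$
[\mathcal{E},\pi,P]\otimes_{C(B)}\alpha \;\in\; \k\k(C^*G,\mathbb{C})
$$
with the index class of a single $G$-invariant, $G$-transversally elliptic pseudodifferential operator $Q'$ on the total space $M$ whose symbol is $\sigma(P)\hat{\otimes}1+1\hat{\otimes}p^*\sigma(Q)$, where $Q$ is a representative of $\alpha$ on $B$. Thus the family situation collapses to the setting already treated by Atiyah--Singer.

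Next, I would pass to multiplicities. By Lemma~\ref{C^*GinN} the algebra $C^*G$ lies in the bootstrap class $N$, and $\k_0(C^*G)=R(G)=\bigoplus_{V\in\hat G}\mathbb{Z}$ is free; so Theorem~\ref{thm:UTC:KK} gives the canonical isomorphism $\k\k(C^*G,\mathbb{C})\simeq \mathrm{Hom}(R(G),\mathbb{Z})$, exactly as in Proposition~\ref{cor:UTC:KK:C^*G:C(B)} with $B$ replaced by a point. Under this isomorphism the integer $m\big([V]\otimes_{C^*G}\ind(P)\otimes_{C(B)}\alpha\big)$ is by definition the evaluation at $[V]\in R(G)$ of the morphism associated to $\mathrm{Ind}(Q')$, hence it equals the classical Atiyah--Singer multiplicity $m_{Q'}(V)$ of the irreducible representation $V$ in the kernel/cokernel difference of $Q'$.

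Finally, I would invoke the fundamental result of \cite{atiyah1974elliptic} which asserts that for any $G$-transversally elliptic operator on a compact $G$-manifold the formal sum $\sum_{V\in\hat G} m_{Q'}(V)\,\chi_V$ defines an $\mathrm{Ad}$-invariant distribution on $G$; equivalently, for every $\varphi\in C^\infty(G)$ the numerical series
$$
\sum_{V\in \hat{G}} m_{Q'}(V)\,\langle \chi_V,\varphi\rangle_{L^2(G)}
$$
is convergent (and the convergence is controlled by finitely many Sobolev-type seminorms of $\varphi$). Combining this with the identification of multiplicities from the previous step yields the desired convergence statement.

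The substantive input is the Atiyah--Singer distributional index theorem, which is cited rather than reproved; the content of the present corollary is the bridge established in the preceding theorem, which allows one to represent the Kasparov pairing of our family index with a $\k$-homology class on $B$ by a genuine $G$-transversally elliptic operator on $M$, so that the classical theory applies. The only mildly delicate point is to match the abstract UCT integer $m\big([V]\otimes_{C^*G}\ind(P)\otimes_{C(B)}\alpha\big)$ with the concrete analytic multiplicity $m_{Q'}(V)$, but this is immediate from the explicit description of the pairing $[V]\underset{C^*G}{\otimes}\ind(\cdot)$ already used in the preceding sections (see the proof of Proposition~\ref{prop:indice=sommultiplicité}).
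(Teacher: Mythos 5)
Your proof follows the same route as the paper: identify $\ind(P)\otimes_{C(B)}\alpha$ with the index class of a single $G$-transversally elliptic operator $Q'$ on $M$ via the preceding theorem, match multiplicities through the UCT isomorphism, and invoke the Atiyah--Singer result that $\sum_V m_{Q'}(V)\chi_V$ is a distribution on $G$. You spell out the UCT matching step in somewhat more detail, but the argument is substantively identical to the paper's.
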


\begin{proof}
Indeed, by the previous proposition $\ind(P)\otimes_{C(B)}\alpha $ is represented by the index class of a $G$-transversally elliptic pseudodifferential operator on $M$. By \cite{atiyah1974elliptic}, we know that the distributional index of a $G$-transversally elliptic operator $Q$ is tempered on $G$ and that it is totally determined by its multiplicities, that is to say $\mathrm{Ind}^M(Q)=\displaystyle \sum\limits_{V\in \hat{G}}m_Q(V)\chi_V$ is defined as distribution.
\end{proof}

\appendix
\section{Some results on families of pseudodifferential operators}\label{Appendix}

\noindent
In this appendix, we recall some classical results on families of pseudodifferential operators. We refer the reader to \cite{lauter2000pseudodiff,lauter2005spectral,lauter1999pseudodiff,monthubert1997indice,nistor1999pseudodiff,Paterson07theequivariant,baldare:these}.

 For the following two well known results see \cite[Theorem 2.2.1 and Proposition 2.2.2.]{Hormander1971}  (see also \cite[Theorem 6.2 and Proposition 6.1]{shubin2001pseudodifferential}  and \cite[Theorem 3.2 and Proposition 3.3]{Kasparov:KKindex}) for the standard results.

\begin{thm}\cite[Theorem 3.2]{Kasparov:KKindex}\label{thm:limitesupfam}
Let P be a $G$-invariant family of pseudodifferrntial operators of order $0$. Assume that its principal symbol $\sigma_P$ is bounded at infinity in the vertical cotangent direction by a constant $C>0$, i.e. $\forall K\subset M$ compact
$$\varlimsup \limits_{\substack{ \xi \to \infty \\ \xi \in T^VM^*}}\|\sigma_P(x,\xi)\|:=\lim \limits_{t\to \infty} \sup \limits_{\substack{|\xi|\geq t\\x\in K}} \|\sigma_P(x,\xi)\|<C. $$
Then there exist a $G$-invariant family of pseudodifferential operators $S$ of order $0$ and a $G$-invariant family of integral operators $R$ with continuous kernel such that $P^*P+S^*S-C^2=R$.
\end{thm}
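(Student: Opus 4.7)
\textbf{Proof plan for Theorem \ref{thm:limitesupfam}.} The strategy is the family/equivariant version of Hörmander's classical construction of the square root of $C^{2}-\sigma_P^*\sigma_P$ in the symbol calculus, followed by an asymptotic correction that reduces the remainder to a smoothing term.

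\medskip

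\textit{Step 1: Ellipticity of $C^{2}-\sigma_P^*\sigma_P$ at infinity.} By the hypothesis and compactness of $M$, there exist $\varepsilon>0$ and a compact subset $K_0\subset T^VM^*$ containing a neighborhood of the zero section such that
$$
\|\sigma_P(x,\xi)\|^2 \;\le\; (C-\varepsilon)^2 \qquad \text{for every } (x,\xi)\in T^VM^*\setminus K_0.
$$
Pick a $G$-invariant cutoff $\chi\in C^{\infty,0}(T^VM^*,[0,1])$ vanishing on a neighborhood of $K_0$ and equal to $1$ outside a slightly larger compact set (averaging over $G$ preserves this property). On the support of $\chi$ the endomorphism $C^2\cdot\mathrm{id}_{E_x}-\sigma_P(x,\xi)^*\sigma_P(x,\xi)$ is bounded below by $(2C\varepsilon-\varepsilon^2)\cdot\mathrm{id}$, hence its positive square root is a well defined smooth, $G$-invariant, fiberwise endomorphism. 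Set
$$
\sigma_{S_0}(x,\xi) \;:=\; \chi(x,\xi)\,\sqrt{\,C^2\cdot\mathrm{id}_{E_x}-\sigma_P(x,\xi)^*\sigma_P(x,\xi)\,},
$$
which is a $G$-invariant classical principal symbol of order $0$ on $T^VM^*$.

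\medskip

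\textit{Step 2: Initial quantization.} Let $S_0$ be any $G$-invariant continuous quantization of $\sigma_{S_0}$ in $\mathcal{P}^0(M,E)$ (average an arbitrary quantization over $G$). The composition formula in the continuous family symbol calculus (see the references cited in the appendix) yields
$$
\sigma_{\mathrm{princ}}(P^*P+S_0^*S_0-C^2)\;=\;\sigma_P^*\sigma_P+\chi^2(C^2-\sigma_P^*\sigma_P)-C^2\;=\;(1-\chi^2)(\sigma_P^*\sigma_P-C^2),
$$
which vanishes where $\chi=1$, i.e. outside a compact subset of $T^VM^*$. Hence $T_0:=P^*P+S_0^*S_0-C^2\cdot\mathrm{Id}$ is a $G$-invariant continuous family of pseudodifferential operators of order $-1$.

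\medskip

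\textit{Step 3: Asymptotic correction.} I proceed inductively. Assume $S_0,\dots ,S_{k-1}$ with $S_j\in\mathcal{P}^{-j}(M,E)$ constructed so that
$$
T_{k-1}\;:=\;P^*P+\Big(\sum_{j<k}S_j\Big)^{\!*}\Big(\sum_{j<k}S_j\Big)-C^2\cdot\mathrm{Id}
$$
has order $-k$. On the locus $\{\chi=1\}$ the symbol $\sigma_{S_0}$ is fiberwise invertible (bounded below), so the equation $\sigma_{S_0}^*\,\sigma_{S_k}+\sigma_{S_k}^*\,\sigma_{S_0}=-\sigma_{\mathrm{princ}}(T_{k-1})$ is solvable at the symbolic level by a $G$-invariant symbol $\sigma_{S_k}$ of order $-k$ (e.g. $\sigma_{S_k}=-\tfrac12\,\sigma_{S_0}^{-1}\sigma_{\mathrm{princ}}(T_{k-1})$ on $\{\chi=1\}$, cut off smoothly and $G$-averaged elsewhere). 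Quantizing $\sigma_{S_k}$ equivariantly gives $S_k\in\mathcal{P}^{-k}(M,E)$, and by construction $T_k$ has order $-k-1$.

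\medskip

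\textit{Step 4: Asymptotic summation and conclusion.} Using the standard Borel-type asymptotic summation in the continuous family calculus, choose $S\in\mathcal{P}^0(M,E)$, $G$-invariant, with $S\sim\sum_{k\ge 0}S_k$. Then $P^*P+S^*S-C^2\cdot\mathrm{Id}$ is a $G$-invariant continuous family of pseudodifferential operators of order $-\infty$, i.e. a continuous family of smoothing operators, which is exactly a $G$-invariant family $R$ of integral operators with continuous (fiberwise smooth) kernel.

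\medskip

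\textit{Main obstacle.} The routine parts are the symbolic algebra and the $G$-equivariance (handled by averaging, using compactness of $G$). The real point that must be checked carefully is that the quantization and the asymptotic summation can be carried out \emph{continuously in the base parameter} $b\in B$, so that the remainder is a genuine continuous family of smoothing operators with jointly continuous Schwartz kernel on $B$ and not merely a measurable one; this relies on the continuous symbol calculus for families recalled in the references (\cite{Atiyah-Singer:IV}, \cite{lauter2000pseudodiff}, \cite{nistor1999pseudodiff}) and on the fact that all the ingredients $\sigma_P$, $\chi$ and the square root above depend continuously on $b$.
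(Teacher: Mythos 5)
The paper does not supply its own proof of this theorem; it states it in the appendix and cites the classical single-operator versions (H\"ormander 1971, Theorem 2.2.1; Kasparov, Theorem 3.2) which it intends to transpose to continuous families. Your proof is exactly that transposition — extracting the square root of $C^2-\sigma_P^*\sigma_P$ away from a compact set via a $G$-averaged cutoff, iterating symbolically (and your asymmetric Sylvester-solution $\sigma_{S_k}=-\tfrac12\sigma_{S_0}^{-1}\sigma_{\mathrm{princ}}(T_{k-1})$ does verify $\sigma_{S_0}^*\sigma_{S_k}+\sigma_{S_k}^*\sigma_{S_0}=-\sigma_{\mathrm{princ}}(T_{k-1})$ since neither $S$ nor the $S_k$ need be self-adjoint), and concluding by asymptotic summation in the families calculus — and you correctly flag the only genuine point of care, namely uniform continuity in the base variable $b$, so it matches the paper's intended route.
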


\begin{prop}\cite[Proposition 3.3]{Kasparov:KKindex}\label{prop:limiteinffam}
Let $Q$ be a $G$-invariant family of pseudodifferential operators of order $0$ such that $Q^*=Q$. Assume that its principal symbol $\sigma_Q$ satisfies that for any compact subset $K\subset M$:
$$\varliminf \limits_{\substack{\xi \to \infty \\ \xi\in T^VM^*  }}\mathrm{Re}\big( \sigma_Q(x,\xi)\big):=\lim \limits_{t\to 0}\inf \limits_{\substack{|\xi| \geq t\\x\in K}} \mathrm{Re}\big( \sigma_Q(x,\xi)\big) >0.$$
Then there exists a $G$-invariant family of operators $S$ of order $0$ such that
$$R=S^*S-Q$$
have continuous kernel. 

\end{prop}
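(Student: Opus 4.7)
The plan is to adapt the classical Hörmander/Shubin square-root construction to the continuous-family and $G$-equivariant setting. Since $Q = Q^*$, the principal symbol $\sigma_Q$ is (fiberwise) self-adjoint and $G$-invariant, and the hypothesis gives $\mathrm{Re}\,\sigma_Q(x,\xi) \geq \varepsilon > 0$ for $\xi\in T^VM^*$ outside a $G$-invariant subset that is compact in each fiber of $T^VM^* \to M$. I would first pick, after averaging over $G$ if needed, a smooth $G$-invariant cutoff $\chi$ on $T^VM^*$ equal to $1$ on a large such set and to $0$ off a slightly larger one, and a constant $M>0$ so large that $\tilde{\sigma}_Q := \sigma_Q + M(1-\chi) \geq \varepsilon/2$ everywhere. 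Then $\sigma_{S_0} := \sqrt{\tilde{\sigma}_Q}$ is a $G$-invariant classical self-adjoint symbol of order $0$, depending continuously on $b \in B$.

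Next I would quantize $\sigma_{S_0}$ by a $G$-invariant vertical quantization, for instance using a $G$-invariant exponential map along the fibers together with a partition of unity on $M$ and a final averaging over $G$, to obtain a $G$-invariant continuous family $S_0$ of vertical pseudodifferential operators of order $0$ with principal symbol $\sigma_{S_0}$. Then $S_0^* S_0 - Q$ is a family of order $0$ whose principal symbol equals $\tilde{\sigma}_Q - \sigma_Q = M(1-\chi)$, which is compactly supported in the vertical cotangent variable. To remove the remaining subprincipal contributions, I would iteratively construct $G$-invariant correction operators $T_k$ of order $-k$ by solving at each step the familiar algebraic relation $2\sigma_{S_0}\sigma_{T_k} = -r_k$ on $T^VM^*$, where $r_k$ is the symbol of order $-k$ produced by the previous step; this is solvable because $\sigma_{S_0} \geq \sqrt{\varepsilon/2}$ nowhere vanishes. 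I would then form an asymptotic sum $S \sim S_0 + \sum_{k\geq 1} T_k$ in the space of $G$-invariant continuous families of vertical pseudodifferential operators of order $0$.

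By construction, $R := S^* S - Q$ is a $G$-invariant continuous family whose full vertical symbol is compactly supported in the vertical cotangent variable modulo a Schwartz-class error, so $R$ admits a jointly continuous Schwartz kernel on the fiber product $M \times_B M$, as required. The main technical obstacle is precisely this last passage: going from ``smoothing in the vertical symbol class'' to ``jointly continuous kernel in $b$'' requires that every step above, namely the cutoff and square root, the $G$-equivariant quantization, the recursive corrections, the asymptotic summation, and the $G$-averaging, be performed within the framework of continuous families of symbols and pseudodifferential operators referenced in the appendix, so that continuity in the base parameter is preserved throughout.
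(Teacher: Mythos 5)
Your proposal is correct, and it follows exactly the classical square-root construction (modify the symbol by a vertically compactly supported cutoff to make it uniformly positive, take the operator square root of the modified symbol, quantize $G$-equivariantly, then iterate the Hörmander parametrix-type correction and asymptotically sum) that the paper itself does not spell out but delegates to the cited references of Hörmander, Shubin, and Kasparov. You correctly identify the only genuinely new point in the family setting, namely that the cutoff, the fiberwise square root, the quantization, the recursive corrections, the asymptotic summation, and the $G$-averaging must all be carried out in the category of continuous families so that the resulting smoothing remainder has a kernel jointly continuous in the base parameter $b$.
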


\begin{lem}Let $R$ be a continuous family of integral operators with continuous kernels.
\begin{enumerate}
\item The family $R$ extends to a bounded operator on $\mathcal{E}$.
\item We have $R\in \mathcal{K}(\mathcal{E})$.
\end{enumerate}
\end{lem}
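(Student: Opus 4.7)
The plan is to obtain part (1) by a direct Cauchy--Schwarz estimate and part (2) by approximating the kernel uniformly by finite sums of rank-one elementary kernels, using the compactness of the fiber product $M\times_{B}M$ together with a Stone--Weierstrass argument over the bundle $E$.

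For part (1), write $R=(R_b)_{b\in B}$ with $(R_bs)(m)=\int_{M_b}k_b(m,m')s(m')\,d\mu_b(m')$, where $k$ is a continuous section of $\pi_1^*E\otimes\pi_2^*E^*$ over the compact space $M\times_{B}M$. Since $E$ carries a $G$-invariant hermitian structure, the pointwise operator norm $\|k_b(m,m')\|_{\mathrm{Hom}(E_{m'},E_m)}$ is a continuous function on $M\times_B M$, hence uniformly bounded by some $C_k$; similarly the total masses $\mu_b(M_b)$ are uniformly bounded because $(\mu_b)$ is a continuous family and $B$ is compact. A fiberwise Cauchy--Schwarz argument then yields $\|\langle Rs,Rs\rangle(b)\|\leq C\,\|\langle s,s\rangle(b)\|$ with a constant $C$ independent of $b$, so $R$ extends to a bounded $C(B)$-linear operator on $\mathcal{E}$. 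Its adjoint corresponds to the continuous kernel $k_b(m',m)^*$, so $R$ is adjointable.

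For part (2), I would approximate $k$ uniformly by finite sums of elementary kernels of the form $\alpha(m)\otimes\beta(m')^*$ with $\alpha,\beta\in C^0(M,E)$. Since $p:M\to B$ is a locally trivial fibration with compact typical fiber and $B$ is compact, $M\times_{B}M$ is compact Hausdorff. Choose a finite open cover of $B$ by trivializing charts and a finite refinement trivializing $E$ over $M$, with a subordinate partition of unity $\{\chi_j^2\}$. On each piece the bundle is trivial, so the restriction of $k$ is a matrix of continuous scalar functions on an open subset of $M\times_{B}M$, each of which is the uniform limit of finite sums of products $f(m)g(m')$ by Stone--Weierstrass applied to $C(M\times M)$ (restricted to $M\times_B M$). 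Multiplying by the partition of unity and summing yields a global uniform approximation $k\approx\sum_{i=1}^{N}\alpha_i(m)\otimes\beta_i(m')^*$ with $\alpha_i,\beta_i\in C^0(M,E)$.

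For any such $\alpha,\beta\in C^0(M,E)\subset\mathcal{E}$, the rank-one Hilbert module operator $\theta_{\alpha,\beta}\in k(\mathcal{E})$ defined by $\theta_{\alpha,\beta}(s)=\alpha\cdot\langle\beta,s\rangle$ has integral kernel exactly $\alpha(m)\otimes\beta(m')^*$. Hence $\sum_i\theta_{\alpha_i,\beta_i}$ is a finite-rank operator whose kernel approximates $k$ uniformly, and the bound from part (1) shows that uniform approximation of kernels passes to operator-norm approximation on $\mathcal{E}$. Therefore $R$ lies in the norm closure of finite-rank operators, i.e.\ $R\in k(\mathcal{E})$.

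The main obstacle is the Stone--Weierstrass step in the presence of the nontrivial hermitian bundle $E$: one has to combine scalar Stone--Weierstrass on $M\times_{B}M$ with local trivializations of $E$ via a partition of unity to produce the factorized form $\sum\alpha_i\otimes\beta_i^*$. Once this approximation is in place, the identification with finite-rank Hilbert module operators and the passage to norm convergence via the estimate from part (1) are routine.
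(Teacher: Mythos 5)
The paper states this lemma without proof, treating it as a standard fact about families of smoothing/integral operators, so there is no argument of the author's to compare yours against. That said, your proof is correct and is essentially the natural Hilbert-module version of the classical argument that a continuous kernel on a compact space gives a Hilbert--Schmidt (hence compact) operator. Part (1) is a clean fiberwise Cauchy--Schwarz estimate, and the resulting bound $\|R\|_{\mathcal{L}(\mathcal{E})}\leq \sup_b \|k_b\|_{L^2(M_b\times M_b)}$ (controlled by $\sup\|k\|\cdot\sup_b\mu_b(M_b)$ since $M\times_B M$ is compact and $(\mu_b)$ is a continuous family) is exactly what you need in part (2) to promote uniform kernel convergence to operator-norm convergence. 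The slightly delicate point is the approximation $k\approx\sum_i\alpha_i\otimes\beta_i^*$, and you handle it correctly: the span of products $f(m)g(m')$ is a unital $*$-subalgebra separating points, so Stone--Weierstrass applies on $M\times_B M$; the hermitian bundle is dealt with by a finite trivializing cover of $M$ and a partition of unity $\{\chi_j\}$ so that $\chi_j(m)\chi_l(m')k(m,m')$ becomes a matrix of scalar continuous functions, and the factors $\chi_j f_i$, $\chi_l g_i$ can be absorbed into the sections $\alpha_i,\beta_i\in C^0(M,E)$. Finally, $\theta_{\alpha,\beta}$ indeed has kernel $\alpha(m)\otimes\beta(m')^*$, so the approximants are in $k(\mathcal{E})$ and $R$ lies in their norm closure. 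One could alternatively invoke the Dixmier-type local criterion for compactness in continuous fields, which the paper records in the remark immediately following the lemma, but your direct finite-rank approximation is simpler and self-contained.
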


\begin{prop}\label{prop:Pborné} Let $P_0 : C^{\infty ,0 }(M,E^+) \rightarrow C^{\infty ,0}(M,E^-)$ be a family of $G$-invariant pseudodifferential operators of order $0$.
We denote by $P : C^{\infty ,0}(M,E) \rightarrow C^{\infty ,0}(M,E)$, the family of operators $\begin{pmatrix}
0&P_0^*\\
P_0&0
\end{pmatrix}$.
The family $P$ extends to a $G$-invariant bounded operator of $\mathcal{L}_{C(B)}(\mathcal{E})$.

\end{prop}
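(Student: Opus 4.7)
The plan is to apply Theorem \ref{thm:limitesupfam} directly to the self-adjoint order-zero family $P$. Since $\tilde{P}$ has order $0$, its principal symbol is bounded on $T^VM^*$, and consequently so is $\sigma_P$. Choosing any constant $C$ strictly greater than $\varlimsup_{\xi \to \infty,\ \xi \in T^VM^*} \|\sigma_P(x, \xi)\|$, Theorem \ref{thm:limitesupfam} produces a $G$-invariant order-$0$ family $S$ and a $G$-invariant family $R$ of integral operators with continuous vertical kernel such that
$$
P^*P + S^*S - C^2 = R.
$$
Note that the right-hand side is automatically self-adjoint since the left-hand side is.

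The second step is to convert this identity into a norm estimate on the pre-Hilbert $C(B)$-module $C^{\infty,0}(M,E)$. Since $S^*S \geq 0$ as an element of $\mathcal{L}_{C(B)}(\mathcal{E})$, one has the operator inequality $P^*P \leq C^2 + R$. Pairing with $s \in C^{\infty,0}(M,E)$ and invoking the preceding lemma of the appendix, which ensures that $R$ extends to a bounded (indeed compact) operator on $\mathcal{E}$, one obtains
$$
\langle Ps, Ps\rangle \;=\; \langle s, P^*P\, s\rangle \;\leq\; C^2 \langle s, s\rangle + \langle s, Rs\rangle \;\leq\; \bigl(C^2 + \|R\|_{\mathcal{L}_{C(B)}(\mathcal{E})}\bigr) \langle s, s\rangle,
$$
the last inequality using that self-adjoint bounded operators $R$ on a Hilbert $C^*$-module satisfy $-\|R\| \leq R \leq \|R\|$. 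This yields
$$
\|Ps\|_{\mathcal{E}} \;\leq\; \bigl(C^2 + \|R\|\bigr)^{1/2}\,\|s\|_{\mathcal{E}},
$$
and therefore $P$ extends by continuity to a $C(B)$-linear bounded operator on all of $\mathcal{E}$.

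Finally, I would verify the remaining structural properties. The $G$-equivariance of the extension is automatic from the $G$-invariance of $\tilde{P}$ together with the fact that the $G$-action on $E$ preserves the fiberwise inner product, so it preserves the $C(B)$-valued inner product on $\mathcal{E}$. For adjointability, the off-diagonal structure of $P$ shows that on $C^{\infty,0}(M,E)$ the operator $P$ equals its own formal adjoint; combined with the norm estimate just obtained, this gives that $P$ is self-adjoint in $\mathcal{L}_{C(B)}(\mathcal{E})$.

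The delicate point of the argument is the passage from the operator identity $P^*P + S^*S - C^2 = R$ to an honest inequality in the ordering of $\mathcal{L}_{C(B)}(\mathcal{E})$; in particular, one needs to know that the operator norm of $R$ on the module really controls $\langle s, Rs\rangle$ uniformly in $b \in B$. This is precisely what the preceding lemma on families of integral operators with continuous kernels ensures, and once this is granted the argument reduces to a Hilbert-module version of the classical scalar proof of boundedness of order-zero pseudodifferential operators as in \cite{Hormander1971,Kasparov:KKindex}.
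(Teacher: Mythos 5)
Your proof is correct, and it uses precisely the route the appendix is set up to enable: the paper omits a written proof of this proposition, instead recalling Theorem~\ref{thm:limitesupfam} and the lemma on families of integral operators for exactly this purpose, which is the classical H\"ormander-style argument for boundedness of order-zero operators. Applying Theorem~\ref{thm:limitesupfam} to the self-adjoint order-$0$ family $P$ (whose symbol is bounded since $\tilde P$ has order $0$), discarding the non-negative term $\langle Ss,Ss\rangle$ (which requires no boundedness of $S$, only positivity of the $C(B)$-valued inner product), and controlling $\langle s,Rs\rangle$ via the compactness of $R$ yields the desired module norm estimate, after which continuity, $G$-invariance, and self-adjointness of the extension follow as you say. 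Your phrasing ``$S^*S\geq 0$ as an element of $\mathcal{L}_{C(B)}(\mathcal{E})$'' is slightly premature since $S$ is not yet known to be bounded, but the inequality you actually use, namely $\langle Ss,Ss\rangle\geq 0$ in $C(B)$ on the dense domain, holds regardless, so the argument is sound.
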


\begin{thm}\label{thm:compact}
Let $P$ be a family of pseudodifferential operators of order $0$. Assume that $$\varlimsup \limits_{\substack{ \xi \to \infty \\ \xi \in T^VM^*}} \|\sigma_P(m,\xi)\|=0$$ then $P\in \mathcal{K}(\mathcal{E})$.
\end{thm}

\begin{cor}\label{cor:compact}
Let $P$ be a family of negative order. Then $P$ extends to a compact operator on $\mathcal{E}.$
\end{cor}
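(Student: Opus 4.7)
The plan is to derive the corollary as a direct consequence of Theorem \ref{thm:compact}, after observing that a family of negative order fits naturally into the setting of that theorem. First, I would note the standard inclusion $\mathcal{P}^m(M,E) \subset \mathcal{P}^0(M,E)$ valid for any $m\leq 0$: a family of order $m<0$ is a fortiori a family of order $0$, so Theorem \ref{thm:compact} is in principle applicable provided one can check the symbolic decay hypothesis $\lim_{\xi\to\infty}\sup_m\|\sigma_P(m,\xi)\|=0$.

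The second and central step is to verify precisely that hypothesis. Working in a local trivialization, the classical pseudodifferential symbol estimates for a family of order $m$ yield, on any compact $K\subset M$ and for every multi-index pair $(p,q)$, a bound
\[
\left\|\frac{\partial^p}{\partial x^p}\frac{\partial^q}{\partial\xi^q}\sigma_P(x,\xi)\right\|\;\leq\; C_{K,p,q}(1+|\xi|)^{m-|q|}.
\]
Taking $p=q=0$ and using that $M$ is compact (hence covered by finitely many such charts), I obtain a uniform bound $\sup_{x\in M}\|\sigma_P(x,\xi)\|\leq C(1+|\xi|)^m$ with $m<0$. This tends to $0$ as $|\xi|\to\infty$, which is exactly the hypothesis of Theorem \ref{thm:compact}. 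The conclusion $P\in k(\mathcal{E})$ follows.

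The only minor subtlety, which is not really an obstacle, is the bookkeeping about what is meant by the ``principal symbol'' of a negative order family when it is reinterpreted as an order $0$ family: one must pass from the homogeneous principal symbol of degree $m<0$ to its class in $\Sigma(M,T^{*V}M,\mathcal{L}(E))$, but since homogeneous functions of negative degree vanish at infinity, and the full symbol representative already decays, this transition is immediate and does not require any nontrivial argument. Thus the proof is essentially a one-line application of Theorem \ref{thm:compact} together with the standard symbol estimates.
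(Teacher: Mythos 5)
Your argument is correct and is precisely the intended derivation: the paper states Corollary \ref{cor:compact} without proof immediately after Theorem \ref{thm:compact}, clearly as the special case in which the classical estimate $\|\sigma_P(x,\xi)\|\leq C(1+|\xi|)^m$ for $m<0$ forces the decay hypothesis. Your remark about passing from the homogeneous principal symbol of negative degree to a class in $\Sigma$ is a sensible clarification, but as you note it costs nothing since that class is trivial.
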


\footnotesize
\bibliographystyle{plain}
\bibliography{Transversalement_elliptique}

\end{document}